\title{Two generator groups acting on the complex hyperbolic plane.}
\date{}
\author{Pierre WILL\\
Institut Fourier\\
100 rue des Maths\\
38402 Saint Martin d'H\`eres\\
FRANCE}
\newcommand{\la}{\langle}
\newcommand{\ra}{\rangle}
\newcommand{\HdC}{ \mbox{{\bf H}}^{2}_{\mathbb C}}
\newcommand{\HdR}{\bold H^{2}_{\mathbb R}}
\newcommand{\HuC}{\bold H^{1}_{\mathbb C}}
\newcommand{\HnC}{\bold H^{n}_{\mathbb C}}
\newcommand{\C}{\mathbb C}
\newcommand{\R}{\mathbb R}
\newcommand{\Q}{\mathbb Q}
\newcommand{\Z}{\mathbb Z}
\newcommand{\A}{\mathbb A}
\newcommand{\n}{\noindent}
\newcommand{\PSL}{\mbox{\rm PSL(2,$\R$)}}
\newcommand{\X}{\mathbf{X}}
\newcommand{\tr}{\mbox{\rm tr}}
\newcommand{\bp}{{\bf p}}
\newcommand{\bm}{{\bf m}}
\newcommand{\bn}{{\bf n}}
\newcommand{\bq}{{\bf q}}
\newcommand{\bz}{{\bf z}}
\newcommand{\bv}{{\bf v}}
\newcommand{\bA}{{\bf A}}
\newcommand{\bB}{{\bf B}}
\newcommand{\bC}{{\bf C}}
\newcommand{\bE}{{\bf E}}
\newcommand{\bP}{{\bf P}}
\renewcommand{\arg}{\mbox{arg}}
\renewcommand{\Re}{\mbox{\rm Re}\,}
\renewcommand{\Im}{\mbox{\rm Im}\,}
\renewcommand{\geq}{\geqslant}
\renewcommand{\leq}{\leqslant}
\newtheorem{theorem}{Theorem}[section]
\newtheorem*{theo*}{Theorem}
\newtheorem{lemma}{Lemma}[section]
\newtheorem{corollary}{Corollary}[section]
\newtheorem{proposition}{Proposition}[section]
\theoremstyle{remark}
\newtheorem{remark}{Remark}[section]
\theoremstyle{definition}
\newtheorem{definition}{Definition}[section]
\begin{document}

\title{Two-generator groups acting on the complex hyperbolic plane}

\author{Pierre Will\thanks{
Work partially supported by ANR Project SGT} \\
Institut Fourier, Universit\'e
Joseph Fourier\\
100 rue des maths, 38402 Saint Martin d'H\`eres, France\\
email:\,\tt{pierre.will@ujf-grenoble.fr}
}

\maketitle

\begin{abstract}
 This is an expository article about groups generated by two isometries of the complex hyperbolic plane.
\end{abstract}



\tableofcontents   

\section{Introduction}
Discrete groups isometries of the complex hyperbolic $n$-space ($n\leqslant 2$) are natural generalisations of Fuchsian 
groups from the context of the Poincar\'e disc to the one of the complex unit ball in $\C^n$. They are far from having been
studied as much as their cousins from the real hyperbolic space. The first works in that direction go back to the end of
the nineteeth century, with works of Picard for instance. Between that moment and the 1970's the subject has not been very 
active, in spite of works of Giraud around 1920 and E. Cartan in the 1930's. The subject was brought back into light in the late 
1970's by Mostow's interest to it and his article \cite{Most}, related to the question  of arithmeticity of lattices in 
symmetric spaces. During the 1980's Goldman and Millson adressed the question of the deformations of lattices from PU($n$,1) 
to PU($n+1$,1), and proved their local rigidity theorem (see \cite{GM}). In this article, we will restrict ourselves to the frame 
of the complex hyperbolic plane, that is when $n=2$.

One of the first problems one encounters is to be able to produce representative examples of discrete subgroups in PU(2,1). This 
question is related for instance to the construction of polyhedra, that arise as fundamental domains for discrete groups. The 
construction of a polyhedron is made difficult by the fact that no totally geodesic hypersurfaces exist in $\HdC$ (indeed, the 
complex hyperbolic space has non-constant negative curvature). Under the influence of Goldman and then Falbel, Parker and Schwartz 
among others, methods to overcome that difficulty have been developed since the early 1990's (see for instance \cite{GP}), and the 
collection of known examples of discrete subgroups of PU($2$,1) has expanded. However, a general theory for these groups is still 
not known.

The goal of this article is to present a few results and methods used in the study of subgroups of PU(2,1), through the scope of 
2-generator subgroups, that is representations of the rank 2 free group $F_2$ to PU(2,1). The reason for this choice is that most
of the known examples are in fact related to 2-generator subgroups, mostly because they are relatively easy to describe algebraically. 

The main general reference on complex hyperbolic geometry is Bill Goldman's book \cite{Go}. In the article \cite{CG}, complex 
hyperbolic space appears as one the the possible hyperbolic spaces over various fields. A lot of information can also be found in
Richard Schwartz's monograph \cite{Schbook}. The book to come \cite{Parbook} by John Parker will provide another introductory source 
soon. Other expository articles on various aspects of complex hyperbolic geometry have been published. Among them, \cite{PP2} 
discusses the question of the complex hyperbolic quasi-Fuchsian groups, \cite{Parklattices} in concerned with lattices, and \cite{S2} 
discusses triangle groups. I have tried to do something different from these articles, but they have been a source of inspiration.
All the results presented here have already been published elsewhere, and I have tried to give as precise references as I could. 
Two exceptions : Theorem \ref{theo-identity} and Theorem  \ref{pairexists} are, to my knowledge, new.

This article is organised as follows. In section \ref{presentation}, I present a few basic facts on the complex hyperbolic 
space. In section \ref{invariants}, projective invariants are exposed, like cross-ratios or triple ratios, and I expose a few 
results connecting these invariants to eigenvalues of matrices in SU(2,1). Section \ref{section-conjug} is devoted to the 
classification of pairs of matrices in SU(2,1) by traces. In section \ref{section-constraints}, I describe some constraints 
(or their absence) on conjugacy classes. Section \ref{section-discrete} is concerned with the question of discreteness of a 
subgroup of PU(2,1). The last two sections \ref{section-triangle-groups} and \ref{section-example-modular} are devoted to examples : triangle 
groups and representations of the modular group, and related questions.\\


\section{The complex hyperbolic space and its isometries\label{presentation}}
\subsection{Projective models for the complex hyperbolic plane}
Projective models for the complex hyperbolic plane are obtained by projecting to $\C P^2$ 
the negative cone of a Hermitian form of signature (2,1) on $\C^3$.
\begin{definition}
Let $H$ be a Hermitian form of signature (2,1) on $\C^3$. The
projective model of $\HdC$ associated to $H$ is $P(V^-)$, where
$V^-=\lbrace v\in\C^3, H(v,v)<0\rbrace$, equipped with the distance
function $d$ given by
\begin{equation}\label{distance}
\cosh^2\left(\dfrac{d(m,n)}{2}\right)=\dfrac{H(\bm,\bn)H(\bn,\bm)}{H(\bm,\bm)H(\bn,\bn)}
\end{equation}
\end{definition}
Among the most frequently used such models are the ball and the Siegel model,
which are respectively obtained from the Hermitian forms $H_1$ and
$H_2$ given in the canonical basis of $\C^3$ by the matrices
\begin{equation}\label{J1J2}
J_1=\begin{bmatrix}
1 & 0 & 0\\
0 & 1 & 0\\
0 & 0 & -1
\end{bmatrix}
\mbox{ and }
J_2=\begin{bmatrix}
0 & 0 & 1\\
0 & 1 & 0\\
1 & 0 & 0
\end{bmatrix}.
\end{equation}
In the case of $J_1$, the complex hyperbolic plane corresponds to the
unit ball of $\C^2$ seen as the affine chart $\{z_3=1\}$ of $\C P^2$. In
the same affine chart, $J_2$ gives an identification between $\HdC$ and
the Siegel domain of $\C^2$ defined by $\lbrace (z_1,z_2),
2\Re(z_2)+|z_1|^2<0\rbrace$. The Siegel model of $\HdC$ is also often referred to as the 
\textit{paraboloid model}. The vector $\begin{bmatrix}1 & 0 & 0\end{bmatrix}^T$ projects onto the only point in its 
closure which is not contained in this affine chart. We will denote this vector by $\bq_\infty$, and refer to 
the corresponding point as $q_\infty$. The two matrices $J_1$ and $J_2$ are conjugate in GL(3,$\C$) by the (order 2) matrix
\begin{equation}\label{Cayley} \bC=\dfrac{1}{\sqrt{2}}\begin{bmatrix}
1 & 0 & 1\\
0 & \sqrt{2} & 0\\
1 & 0 & -1
\end{bmatrix}.\end{equation}
The linear transformation given by $\bC$ descends to $\C P^2$ as the
\textit{Cayley transform}, which exchanges the ball and Siegel models
of $\HdC$. Of course, picking another Hermitian leads to another system of
coordinates on the complex hyperbolic plane, which is projectively equivalent to these. 
It is often useful to do so to adapt coordinates to a specific problem 
(see for instance \cite{Most,P3,ParkPau}). 

\n In the Siegel model, any point of $\HdC$ admits a unique standard lift given by
\begin{equation}
 m_{z,t,u}=\begin{bmatrix}
            -|z|^2-u+it\\\sqrt{2}z\\1
           \end{bmatrix}\mbox{, where }z\in\C,t\in\R\mbox{ and }u>0.
\end{equation}
The triple $(z,t,u)$ is called the horospherical coordinates of a point: horospheres based at $q_\infty$ are the level sets 
of $u$. It is an easy exercise to verify that they are preserved by parabolic isometries fixing $q_\infty$ using the matrices 
in section \ref{section-isom-types}. In these coordinates, the boundary of $\HdC$ corresponds to the null-locus of $u$. 
A boundary point is thus given by a pair $[z,t]\in\C\times\R$. The action of the unipotent parabolic maps (see Section 
\ref{section-isom-types} below) coincides with the Heisenberg multiplication :
\begin{equation}\label{heislaw}
[z,t]\cdot[w,s]=[z+w,t+s+2\Im(z\bar w)].
\end{equation}
The boundary of $\HdC$ can thus be thought of as the one point compactification of the 3 dimensional Heisenberg group.
Detailed information on these two models of the complex hyperbolic plane can be found in Chapters 3 and 4 of \cite{Go}.
\subsection{Totally geodesic subspaces.\label{section-total-geod}}
An important feature of complex hyperbolic plane is that it does not contain any (real) codimension-1 totally geodesic 
subspace. This is crucial when one wants to construct fundamental domains for a subgroups of PU(2,1), as it makes the 
construction of polyhedra very difficult. It is a consequence of the fact that the real sectional curvature of $\HdC$ is 
non constant (it is pinched between $-1$ and $-1/4$ in the normalisation we have chosen here). The maximal totally geodesic 
subspaces come in two types:
\begin{enumerate}
 \item Complex lines are intersections of projective lines of $\C P^2$ with $\HdC$ (when non-empty). The duality associated with the 
Hermitian form provides a correspondence between the set of complex lines of $\HdC$ and the outside $\C P^2\setminus\HdC$. Indeed, any 
complex line $L$ is the intersection with $\HdC$ of the projection to $\C P^2$ of the kernel of a linear form $z\longmapsto\la z,\bn\ra$, 
where $\bn$ is a positive type vector. Such a vector $\bn$ is called \textit{polar} to $L$ and is unique up to scaling.
\item Real planes are totally geodesic totally real subspaces of $\HdC$. Using a Hermitian form with real coefficients as 
$H_1$ or $H_2$, real planes are images under PU(2,1) of the set of real points of the considered projective model of complex 
hyperbolic space. In particular, they are the fixed point sets of \textit{real reflections}. The standard example of a real 
reflection is complex conjugation $(z_1,z_2)\longmapsto (\overline{z_1},\overline{z_2})$, which is an isometry.
\end{enumerate}
The two kinds of totally geodesic subspaces realise the extrema of the sectional curvature: it is  $-1/4$ for real planes and 
$-1$ for complex lines. 
\subsection{Isometry types and conjugacy classes in PU(2,1).\label{section-isom-types}}
As usual, there are three mutually exclusive isometry types: loxodromic, 
elliptic and parabolic, depending on the location of fixed points.  We refer the reader to Chapter 6 of \cite{Go} for 
basic definitions, but we would like to emphasize a few facts. 
\paragraph{Elliptics}
Elliptic isometries have (at least) one fixed point inside $\HdC$. There are
two kinds of elliptic isometries.
\begin{definition}
An elliptic isometry $f$ is called \textit{regular} if any of its
lifts to SU(2,1) has three pairwise distinct eigenvalues. Any other
elliptic isometry is called a \textit{complex reflection}.
\end{definition}
\noindent In the ball model coordinates, any elliptic isometry $E$ is conjugate to one given by the diagonal matrix
\begin{equation}
 {\bf E}=\begin{bmatrix}
  e^{i\alpha} & 0 & 0 \\
  0 & e^{i\beta} & 0 \\
  0 & 0 & e^{i\gamma}
 \end{bmatrix}.
\end{equation}
Projectively, the associated isometry is given by
\begin{equation}(z_1,z_2)\longmapsto\left(e^{i(\alpha-\gamma)}z_1,e^{i(\beta-\gamma)}z_2\right).\label{regular-elliptic}\end{equation}
We thus see that $E$ has two stable complex lines, which are in the normalised case of \eqref{regular-elliptic} the complex axes of 
coordinatesof the ball. Its conjugacy class is determined by the (unordered) pair $\lbrace\alpha-\gamma,\beta-\gamma \rbrace$. 
These two angles correspond to the rotation angles of the restriction of the elliptic to its stable complex lines.
Whenever two of the eigenvalues are equal, the action of $E$ on the unit ball is given (up to conjugacy) by one of the 
following maps
\begin{align}
(z_1,z_2) & \longmapsto  (z_1,e^{i\theta}z_2)\label{droite}\\
(z_1,z_2) & \longmapsto  (e^{i\theta}z_1,e^{i\theta}z_2)\label{point}.
\end{align}
\n  We refer to the first case as a \textit{complex reflection about a line} (in \eqref{droite} the first axis of coordinates is fixed),
 and to the second as  a \textit{complex reflection about a point} (in \eqref{point} the point $(0,0)$ is the unique fixed point). 
These reflections may not have order 2, and not even finite order. 
\paragraph{Parabolics}
Parabolic isometries fall into two types: unipotent parabolics and
screw-parabolics. Unipotent parabolics are those admitting a unipotent lift to
SU(2,1). Unipotent matrices in SU(2,1) can be either 2 or 3 step unipotent. There are in turn 
three conjugacy classes of unipotent parabolics in PU(2,1) represented in the Siegel model by the 
following matrices:
\begin{equation}
\begin{bmatrix} 1 & 0 & \pm i\\ 0 & 1 & 0\\0 & 0 & 1\end{bmatrix}\mbox{ and }\begin{bmatrix}1 & -\sqrt{2} & -1\\ 0 & 1 & \sqrt{2}\\0 & 0 & 1\end{bmatrix}.
\end{equation}
Unipotent parabolics fixing $q_\infty$ are \textit{Heisenberg translations} : they act on the Heisenberg group 
as the left multiplication by $[z,t]$ (compare with \eqref{heislaw}). 
\begin{equation}\label{liftHeistrans}
 T_{[z,t]}=\begin{bmatrix}
             1 & -\bar z\sqrt{2} & -|z|^2+it\\
             0 & 1 & z\sqrt{2}\\
             0 & 0 & 1 
            \end{bmatrix}.
\end{equation}
It is easily checked using \eqref{liftHeistrans} that $T_{[z,t]}T_{[w,s]}=T_{[z,t]\cdot[w,s]}$.
There is also a 1-parameter family of screw parabolic conjugacy classes, represented by the matrices
\begin{equation}\label{screw}
\begin{bmatrix} e^{-i\alpha/3} & 0 & ie^{-i\alpha/3}\\ 0 & e^{2i\alpha/3} & 0\\0 & 0 & e^{-i\alpha/3}\end{bmatrix}.
\end{equation}
The action on the boundary of the screw-parabolic element given by \eqref{screw} is in Heisenberg coordinates $[z,t]\longmapsto[e^{i\alpha}z,t+1]$. 
This explains the terminology.
\paragraph{Loxodromics.} Any loxodromic isometry is conjugate to the one given in the Siegel model by the matrix
\begin{equation}\label{class-conj-loxo}
 \bA_\lambda=\begin{bmatrix}
              \lambda & 0 & 0\\
               0 & \overline{\lambda}/\lambda & 0\\
               0 & 0 & 1/\overline{\lambda}
             \end{bmatrix},\mbox{ where }|\lambda|>1
\end{equation}
In view of \eqref{class-conj-loxo}, a loxodromic conjugacy class in PU(2,1) is determined by a complex number of modulus greater 
than $1$, defined up to multiplication by a cube root of $1$. This means that the set of loxodromic conjugacy classes can be seen
as the cylinder $\mathcal{C}_{\small {\rm lox}}=\{|z|>1\}/\Z_3$. The translation length $\ell$ of a loxodromic isometry is given 
in terms of eigenvalues by $e^{\ell/2}=|\lambda|$ (see Proposition 3.10 of \cite{Parktraces}).

\paragraph{Trace in SU(2,1) and isometry type.}
In SL(2,$\C$), the isometry type of an element can be decided by its trace. It is almost the case in SU(2,1), as shown 
by the next proposition (see Chapter 6 of \cite{Go} for more details).
\begin{proposition}\label{deltofunc}
Let $A\in $ PU(2,1) be a holomorphic isometry of the complex hyperbolic space, and ${\bf A}$ be a lift of it to SU(2,1). 
Let $f$ be the function on $\C$ defined by $f(z)=|z|^4-8\Re(z^3)+18|z|^2-27$. 
\begin{enumerate}
 \item If $f(\tr{\bf A})>0$ then $A$ is loxodromic.
 \item If $f(\tr{\bf A})<0$ then $A$ is regular elliptic.
 \item If $f(\tr{\bf A})=0$ then $A$ is either parabolic or a complex reflection.
\end{enumerate}
\end{proposition}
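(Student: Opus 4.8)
The plan is to recognise $f(\tr\mathbf{A})$ as the discriminant of the characteristic polynomial of $\mathbf{A}$, and then to read off the sign of this discriminant from the arrangement of the eigenvalues. First I would record the shape of the characteristic polynomial of a matrix $\mathbf{A}\in\mathrm{SU}(2,1)$. Writing $J$ for the matrix of the Hermitian form, the relation $\mathbf{A}^*J\mathbf{A}=J$ gives $\mathbf{A}^{-1}=J^{-1}\mathbf{A}^*J$, so that $\mathbf{A}^{-1}$ is conjugate to $\mathbf{A}^*$ and $\tr(\mathbf{A}^{-1})=\overline{\tr\mathbf{A}}$. Setting $\tau=\tr\mathbf{A}$ and using $\det\mathbf{A}=1$, the coefficient of $X$ in the characteristic polynomial (the sum of the principal $2\times2$ minors) equals $\tr(\mathbf{A}^{-1})=\overline{\tau}$, so
\[
\chi_{\mathbf{A}}(X)=X^3-\tau X^2+\overline{\tau}X-1.
\]

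A direct substitution of $(b,c,d)=(-\tau,\overline{\tau},-1)$ into the classical discriminant $18bcd-4b^3d+b^2c^2-4c^3-27d^2$ of the monic cubic $X^3+bX^2+cX+d$ then yields exactly $f(\tau)$; equivalently, denoting by $\lambda_1,\lambda_2,\lambda_3$ the eigenvalues of $\mathbf{A}$, one has $f(\tau)=\prod_{i<j}(\lambda_i-\lambda_j)^2$, which is a real number. (A quick sanity check: $f$ is unchanged under $\tau\mapsto\zeta\tau$ for $\zeta^3=1$, so $f(\tr\mathbf{A})$ does not depend on the choice of lift and the statement is well posed.) It then remains to determine the sign of $f(\tau)$, and for this the only input I need about isometry types is the eigenvalue pattern of each conjugacy class, which can be read directly from the normal forms listed above: the spectrum is stable under $\lambda\mapsto1/\overline{\lambda}$ (again because $\mathbf{A}^{-1}$ is conjugate to $\mathbf{A}^*$); a regular elliptic has three distinct eigenvalues of modulus $1$ (matrix $\mathbf{E}$); a loxodromic has eigenvalues $\lambda,\overline{\lambda}/\lambda,1/\overline{\lambda}$ with $|\lambda|>1$, which are pairwise distinct since their moduli are $|\lambda|,1,1/|\lambda|$; and a parabolic or a complex reflection always has a repeated eigenvalue.

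The heart of the argument, and the step I expect to require the most care, is the sign computation, which I would organise around the product $P=\prod_{i<j}(\lambda_i-\lambda_j)$, so that $f(\tau)=P^2$. When $\mathbf{A}$ has a repeated eigenvalue $P=0$, giving $f(\tau)=0$ for parabolics and complex reflections. In the regular elliptic case all $|\lambda_j|=1$, hence $\overline{\lambda_j}=1/\lambda_j$; using $\lambda_1\lambda_2\lambda_3=1$ one computes $\overline{P}=\prod_{i<j}(1/\lambda_i-1/\lambda_j)=-P$, so $P$ is purely imaginary and $f(\tau)=P^2<0$. In the loxodromic case the involution $\lambda\mapsto1/\overline{\lambda}$ fixes the modulus-one eigenvalue and swaps the other two, i.e. $\overline{\lambda_i}=1/\lambda_{\sigma(i)}$ for a transposition $\sigma$; feeding these relations into $\overline{P}$ and again using $\lambda_1\lambda_2\lambda_3=1$ gives $\overline{P}=P$, so $P$ is real and $f(\tau)=P^2>0$. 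The delicate points are keeping track of the sign produced by the three factors and of the permutation of the eigenvalues, and confirming that these eigenvalue patterns are exhaustive. Finally, since the four classes (loxodromic, regular elliptic, complex reflection, parabolic) are mutually exclusive and their values of $f$ lie respectively in $(0,\infty)$, $(-\infty,0)$, $\{0\}$ and $\{0\}$, the three implications of the statement follow by inversion: $f(\tau)>0$ forces loxodromic, $f(\tau)<0$ forces regular elliptic, and $f(\tau)=0$ forces a complex reflection or a parabolic.
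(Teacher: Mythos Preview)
Your proof is correct and follows the same core idea as the paper's sketch: the paper simply remarks that ``the function $f$ is the resultant of the characteristic polynomial'' $\chi_{\bA}(X)=X^3-\tau X^2+\overline{\tau}X-1$ and declares the proposition ``straightforward'' from there, without writing out the sign analysis at all. Your argument fills in exactly what the paper omits, and your device for computing the sign---showing that $P=\prod_{i<j}(\lambda_i-\lambda_j)$ is purely imaginary when all eigenvalues lie on the unit circle and real when the spectrum is $\{\lambda,\overline{\lambda}/\lambda,1/\overline{\lambda}\}$, both via the involution $\lambda\mapsto 1/\overline{\lambda}$ on the spectrum together with $\lambda_1\lambda_2\lambda_3=1$---is a clean way to do it that avoids any case-by-case real/complex root count.
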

The zero-locus of the function $f$ is depicted on figure \ref{delto}.
Proposition \ref{deltofunc} is straightforward once one notes that the function $f$ is the resultant of the 
characteristic polynomial of a generic element ${\bf A}$ of SU(2,1), which is given by $\chi_A(X)=X^3-z X^2+\overline{z} X-1$, 
where $z=\tr{\bf A}$. 
\begin{figure}\label{delto}
\begin{center}
\scalebox{0.8}{\includegraphics{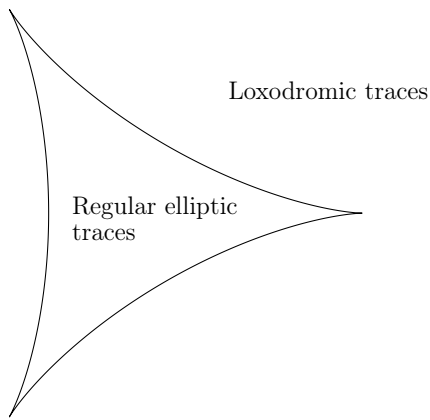}}
\end{center}
\caption{The zero locus of $f$.}
\end{figure}
The conjugacy class of an element in PU(2,1) is not determined in general by the trace of one of its lifts. 
The situation is as follows. All complex numbers here are considered up to multiplication by a cube root of 1.
\begin{enumerate}
 \item Each complex number outside the deltoid curve is the trace of a unique loxodromic conjugacy class in SU(2,1). 
 \item Each complex number on the deltoid curve, but not at a cusp, corresponds to three different SU(2,1)-conjugacy classes. 
One of these classes is parabolic and corresponds to non-semi simple elements in SU(2,1), and two are complex reflections 
about a point or about a line. In these cases the spectrum is of the type $\{e^{2i\theta},e^{-i\theta},e^{-i\theta}\}$ for 
some $\theta\in\R$.
 \item A non-trivial element in SU(2,1) with trace $3$ is unipotent. This gives the three unipotent conjugacy classes given above.
 \item Any complex number inside the deltoid curve corresponds to three regular elliptic conjugacy classes. Here the spectrum is of 
the form $\{e^{i\alpha},e^{i\beta},e^{-i(\alpha+\beta)}\}$. The three different conjugacy classes correspond to the possible relative
locations of the corresponding eigenvectors: one of them is inside the negative cone of the Hermitian form, and the other two are 
outside. This leaves three possibilities for angle pairs.
\end{enumerate}
\section{Projective invariants for configurations of points in $\HdC$.\label{invariants}}
\subsection{Triple ratio and cross ratio.\label{section-triple-cross}}
A lot of information on projective invariants for configurations of points or complex lines can be found in Chapter 7 of \cite{Go}.
We present here a few cross-ratio type invariants that we will need later on.
\subsubsection{Triples of points}
\begin{definition}
Let $\tau=(p_1,p_2,p_3)$ be an (ordered) triple of pairwise distinct
points in the closure of $\HdC$. We denote by $\bp_i$  a lift of $p_i$ to $\C^3$.
\begin{enumerate}
\item The triple ratio of $\tau$ is defined by
\begin{equation}\label{tripleratio}
{\bf T}(p_1,p_2,p_3)=\dfrac{\la\bp_1,\bp_2\ra\la\bp_2,\bp_3\ra\la\bp_3,\bp_1\ra}{\la\bp_1,\bp_3\ra\la\bp_3,\bp_2\ra\la\bp_2,\bp_1\ra}.
\end{equation}
\item The angular invariant of $\tau$ is the quantity
\begin{equation}\label{angular}
\alpha(\tau)=\arg\left(-\la\bp_1,\bp_2\ra\la\bp_2,\bp_3\ra\la\bp_3,\bp_1\ra\right).
\end{equation}
\end{enumerate}
\end{definition}
Both invariants are independant of the choices of lifts made. The angular invariant is linked to the  triple ratio by
\begin{equation}\label{angulartriple}
e^{2i\alpha(\tau)}={\bf T}(\tau).
\end{equation}
The angular invariant $\alpha$ measures the complex area of a simplex built on the triangle $(p_1,p_2,p_3)$. Indeed, it satisfies 
\begin{equation}
\int_{\Delta(p_1,p_2,p_3)} \omega =2\alpha(p_1,p_2,p_3),
\end{equation}
where $\omega$ is the K\"ahler form on $\HdC$. This is proved in Chapter 7 of \cite{Go}. The connection between the angular invariant 
of a triangle and the integral of the K\"ahler form is related to the definition of the Toledo invariant of a representation of the 
fundamental group of a surface in $\HnC$ (see \cite{Tol} and Section 7.1.4. of \cite{Go}).

In the case of ideal triangles where the three points are all on the boundary of $\HdC$,
the angular invariant is usually called \textit{Cartan's invariant} and denoted by $\A(p_1,p_2,p_3)$. We will call any 
such triple of points an \textit{ideal triangle}.  The main properties of the 
Cartan invariant are summarised in the next proposition (see Chapter 7 of \cite{Go} for proofs).
\begin{proposition}\label{prop-prop-Cartan}
 The Cartan invariant enjoys the following properties.
\begin{enumerate}
\item For any ideal triangle $\tau$, $\A(\tau)\in[-\pi/2,\pi/2]$.
\item Two ideal triangles are in the same PU(2,1)-orbit if and only if
  they have the same Cartan invariant.
\item An ideal triangle $\tau$ has zero  Cartan invariant if and only if it lies in a real plane.
\item An ideal triangle $\tau$ has extremal Cartan invariant ($|\A(\tau)|=\pi/2$) if and only if it lies in a complex line.
\end{enumerate}
\end{proposition}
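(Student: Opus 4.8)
The plan is to treat the four assertions using the Gram matrix of the three lifts together with one explicit normalisation in the Siegel model. Write $z_{ij}=\la\bp_i,\bp_j\ra$, and note that since the $p_i$ lie on $\partial\HdC$ we have $\la\bp_i,\bp_i\ra=0$, so the Gram matrix of $(\bp_1,\bp_2,\bp_3)$ is
\begin{equation*}
G=\begin{bmatrix} 0 & z_{12} & z_{13}\\ \overline{z_{12}} & 0 & z_{23}\\ \overline{z_{13}} & \overline{z_{23}} & 0\end{bmatrix},\qquad \det G = 2\Re\left(z_{12}z_{23}\overline{z_{13}}\right).
\end{equation*}
Since $\la\bp_3,\bp_1\ra=\overline{z_{13}}$, the quantity whose argument defines $\A(\tau)$ is exactly $-z_{12}z_{23}\overline{z_{13}}=|z_{12}z_{23}z_{13}|\,e^{i\A(\tau)}$, so that $\det G=-2|z_{12}z_{23}z_{13}|\cos\A(\tau)$, and all three factors $z_{ij}$ are nonzero because the points are pairwise distinct.

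First I would prove (1) and (4) from this identity by a signature argument. The three null lifts are pairwise non-proportional, so they span either a $2$- or a $3$-dimensional subspace. If they span $\C^3$, then $G$ is the matrix of $H$ in the basis $(\bp_1,\bp_2,\bp_3)$; as $H$ has signature $(2,1)$ this forces $\det G<0$, hence $\cos\A(\tau)>0$ and $\A(\tau)\in(-\pi/2,\pi/2)$. If instead they span a $2$-plane then $\det G=0$ and $\cos\A(\tau)=0$, i.e. $|\A(\tau)|=\pi/2$; and three distinct boundary points whose lifts are linearly dependent lie on a projective line meeting $\HdC$, which is exactly a complex line. Conversely, three points on a complex line have linearly dependent lifts and hence $\det G=0$. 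This proves (1) and (4) simultaneously.

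Next I would establish (2) by normalisation in the Siegel model. Since PU(2,1) acts transitively on pairs of distinct boundary points, I would send $p_1\mapsto q_\infty$ and $p_2\mapsto[0,0]$, with lifts $\bq_\infty=[1,0,0]^T$ and $[0,0,1]^T$, and write $p_3=[z,t]$. A direct computation gives $z_{12}=1$, $z_{23}=-|z|^2-it$ and $\overline{z_{13}}=1$, whence $\A(\tau)=\arg(|z|^2+it)$. The stabiliser of the pair $(q_\infty,[0,0])$ consists of the Heisenberg dilations $[z,t]\mapsto[\delta z,\delta^2 t]$ and rotations $[z,t]\mapsto[e^{i\varphi}z,t]$; both preserve $\arg(|z|^2+it)$ and act transitively on each of its level sets (for $z\neq0$). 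Hence two such triangles are PU(2,1)-equivalent if and only if they share the value $\A(\tau)$, which gives (2); the PU(2,1)-invariance of $\A$, clear from the definition since the scaling of lifts cancels, gives the easy direction.

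Finally (3) follows by combining the two tools, and this is where I expect the subtlety to lie. If $\tau$ lies in a real plane I would use the real reflection $\sigma$ fixing that plane: it is antiholomorphic with $\la\sigma u,\sigma v\ra=\overline{\la u,v\ra}$, so choosing $\sigma$-fixed lifts makes every $z_{ij}$ real, forcing $\A(\tau)\in\{0,\pi\}$ and hence $\A(\tau)=0$ by (1). Conversely, if $\A(\tau)=0$ the normalisation of (2) gives $t=0$, so $p_3=[z,0]$; after a rotation $z$ may be taken real, and then $q_\infty,[0,0],[z,0]$ all admit real lifts and lie in the standard real plane fixed by complex conjugation, so $\tau$ lies in a real plane since elements of PU(2,1) carry real planes to real planes. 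The main obstacle I anticipate is the completeness half of the normalisation in (2): one must check that the stabiliser of a pair of boundary points acts transitively on the level sets $\{\arg(|z|^2+it)=\mathrm{const}\}$, i.e. that $\A$ is a full invariant of the third point and not merely an invariant; the companion point requiring care is the signature bookkeeping of (1) and (4), matching the sign of $\det G$ to the signature $(2,1)$ and identifying the degenerate case with a complex line.
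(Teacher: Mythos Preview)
Your proof is correct, but note that the paper does not actually prove this proposition: it simply states the result and refers the reader to Chapter~7 of Goldman's book \cite{Go} for proofs. So there is no argument in the paper to compare yours against.

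That said, your approach is the standard one, and is essentially what one finds in Goldman: the Gram-matrix identity $\det G=-2|z_{12}z_{23}z_{13}|\cos\A(\tau)$ combined with the signature of $H$ gives (1) and (4), and a normalisation in the Siegel model together with the description of the stabiliser of an ordered pair of boundary points as Heisenberg dilations and rotations gives (2) and (3). Two small points worth tightening. First, in your treatment of (2) you write ``for $z\neq0$'' when asserting transitivity on level sets of $\arg(|z|^2+it)$; the case $z=0$ (which is the complex-line case $|\A|=\pi/2$) should be handled too, and indeed dilations alone act transitively on each of $\{z=0,\,t>0\}$ and $\{z=0,\,t<0\}$, so (2) holds in full. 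Second, in (3) you invoke the existence of $\sigma$-fixed lifts; this is true but deserves one line: if $\sigma(\bp_i)=\mu_i\bp_i$ then $\sigma^2=\mathrm{id}$ forces $|\mu_i|=1$, and replacing $\bp_i$ by $\mu_i^{1/2}\bp_i$ gives a fixed lift. With these two clarifications your argument is complete and self-contained, which is more than the paper itself offers.
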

\subsubsection{Quadruples of points\label{section-quadruples}}
In this section, we classify ideal tetrahedra up to PU(2,1). The main invariant is the complex cross-ratio, that was defined by 
Koranyi and Reimann in \cite{KR}.
\begin{definition}
Let $\mathcal{Q}=\left(p_1,p_2,p_3,p_4\right)$ be an ordered quadruple
of pairwise distinct points in the closure of $\HdC$. The
\textit{complex cross ratio} of $\mathcal{Q}$ is the quantity
\begin{equation}\label{crossratio}
\X(p_1,p_2,p_3,p_4)=\dfrac{\la\bp_3,\bp_1\ra\la\bp_4,\bp_2\ra}{\la\bp_3,\bp_2\ra\la\bp_4,\bp_1\ra},
\end{equation}
where $\bp_i$ is a lift of $p_i$ to $\C^3$.
\end{definition}
Clearly, the complex cross-ratio is PU(2,1)-invariant. A rough dimension count shows that the expected dimension of the set 
of PU(2,1)-orbits of ideal tetrahedra is four. Indeed, the set of ideal tetrahedra is $(S^3)^4$, and PU(2,1) has real dimension 8. 
In particular there is no hope to classify these orbits with a  a single cross-ratio. Various choices of invariants are possible to 
classify these orbits. In \cite{FalJDG,FalPla,PP,PP3,Wi3}, the choice made is to use three cross-ratios linked by 2 (real) relations. 
In \cite{GuH1}, Gusevski and Cunha have used two cross-ratios and one Cartan invariant that are connected by one (real) relation. 
Their choice is in a sense better, as it allows to avoid hypotheses of genericity. The choices of cross-ratios made in 
\cite{FalJDG,FalPla,PP,PP3,Wi3} do not detect (degenerate) ideal tetrahedra that are contained in a complex line. For our concern, 
we will use the same convention as in \cite{PP}, and keep in mind the slight ambiguity pointed out in \cite{GuH1}. In particular, we 
will only consider ideal tetrahedra that are not contained in a complex line, and we will calll these \textit{non-degenerate}. For 
a given ideal tetrahedron $(p_1,p_2,p_3,p_4)$, we denote by $\X_i$ the three cross-ratios given by
\begin{equation}\label{choice-cross}
 \X_1=\X(p_1,p_2,p_3,p_4), \X_2=\X(p_1,p_3,p_2,p_4) \mbox{ and }\X_3=\X(p_2,p_3,p_1,p_4).
\end{equation}
We will refer to $(\X_1,\X_2,\X_3)$ as the \textit{cross-ratio triple} of the tetrahedron $(p_1,p_2,p_3,p_4)$. Using the Siegel 
model one can normalise any ideal tetrahedron so that it is given by the following lifts.
\begin{equation}\label{normaltetra}
\bp_1=\begin{bmatrix}0 \\ 0 \\1\end{bmatrix},\, \bp_2= \begin{bmatrix}1 \\ 0 \\0\end{bmatrix},\, 
\bp_3= \begin{bmatrix}z_1 \\ z_2 \\1\end{bmatrix}\mbox{ and } \bp_4= \begin{bmatrix}1 \\ w_2 \\w_3\end{bmatrix}.
\end{equation}
As $p_3$ and $p_4$ belong to $\partial\HdC$ the following relations are satisfied
\begin{equation}\label{relbord}
z_1+\overline{z_1}+|z_2|^2=w_3+\overline{w_3}+|w_2|^2=0.
\end{equation}
In this case, the cross-ratio triple is as follows.
\begin{align}
 \X_1&= z_1w_3\label{X1}\\
 \X_2&= 1+\overline{z_2}w_2+\overline{z_1}w_3\label{X2}\\
 \X_3&=\dfrac{1+\overline{z_2}w_2+\overline{z_1}w_3}{\overline{z_1}w_3}.\label{X3}
\end{align}
The two real relations mentioned above are as follows.
\begin{proposition}\label{prop-relations-crossratio}
Let $(p_1,p_2,p_3,p_4)$ be an ideal tetrahedron. Then the three cross-ratios $\X_1$, $\X_2$ and $\X_3$ satisfy the relations
\begin{equation}\label{X-eq-1}
 |\X_2|=|\X_1\X_3|
\end{equation}
and
\begin{equation}\label{X-eq-2}
2|\X_1|^2\Re(\X_3) = |\X_1|^2+|\X_2|^2+1-2\Re(\X_1+\X_2)\\ 
\end{equation}
\end{proposition}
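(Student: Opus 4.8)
The plan is to argue directly from the explicit formulas \eqref{X1}, \eqref{X2} and \eqref{X3} for the cross-ratio triple, invoking the boundary conditions \eqref{relbord} only at the one place where they are genuinely needed. The first relation \eqref{X-eq-1} requires essentially no computation: cancelling the common factor $w_3$ in the product of \eqref{X1} and \eqref{X3} gives $\X_1\X_3 = (z_1/\overline{z_1})\,\X_2$, and since $|z_1/\overline{z_1}| = 1$ we obtain $|\X_1\X_3| = |\X_2|$ at once. It is worth noting that this relation holds regardless of whether $p_3$ and $p_4$ lie on $\partial\HdC$.

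For the second relation \eqref{X-eq-2} I would introduce the shorthand $a = \overline{z_1}w_3$ and $b = \overline{z_2}w_2$, so that $\X_2 = 1 + a + b$ and $\X_3 = \X_2/a$, while $|\X_1|^2 = |z_1|^2|w_3|^2 = |a|^2$. The left-hand side then collapses to
\begin{equation}
2|\X_1|^2\Re(\X_3) = 2|a|^2\,\Re(\X_2/a) = 2\Re(\overline{a}\,\X_2) = 2\Re(a) + 2|a|^2 + 2\Re(\overline{a}b).
\end{equation}
Expanding the right-hand side $|\X_1|^2 + |\X_2|^2 + 1 - 2\Re(\X_1+\X_2)$ with $|\X_2|^2 = 1 + |a|^2 + |b|^2 + 2\Re(a) + 2\Re(b) + 2\Re(a\overline{b})$, I expect massive cancellation: the constants, the $|a|^2$ terms, the terms $\Re(a)$ and $\Re(b)$ carried by $|\X_2|^2$ and by $-2\Re(\X_2)$, and the two mixed terms $\Re(\overline{a}b)$ and $\Re(a\overline{b})$ (which are complex conjugate, hence have equal real part) all disappear. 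What should survive is the single scalar identity $2\Re(a) + 2\Re(\X_1) = |b|^2$, that is, after unpacking,
\begin{equation}
2\Re(z_1\overline{w_3}) + 2\Re(z_1 w_3) = |z_2|^2|w_2|^2.
\end{equation}

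This is exactly where the boundary relations enter. The left-hand side equals $2\Re\big(z_1(\overline{w_3}+w_3)\big) = 4\Re(z_1)\Re(w_3)$, and \eqref{relbord} gives precisely $2\Re(z_1) = -|z_2|^2$ and $2\Re(w_3) = -|w_2|^2$, so the product is $|z_2|^2|w_2|^2$, which closes the argument. The computation is mechanical, so the only real obstacle is the bookkeeping. The one delicate point is that $\X_1 = z_1 w_3$ is not the variable $a = \overline{z_1}w_3$ that governs everything else; consequently $\Re(\X_1)$ never cancels against the remaining terms and must be carried untouched until the final step. The entire mathematical content of the proposition sits in that last line: \eqref{X-eq-1} reflects a free phase $z_1/\overline{z_1}$, while \eqref{X-eq-2} is, after all the cancellation, nothing other than the product of the two null conditions in \eqref{relbord} expressing that $p_3,p_4\in\partial\HdC$.
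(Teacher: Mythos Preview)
Your argument is correct. Both identities fall out exactly as you claim: the first from the phase factor $z_1/\overline{z_1}$, and the second from the product of the two null conditions in \eqref{relbord} after the bookkeeping you describe. The step where you distinguish $\X_1=z_1w_3$ from $a=\overline{z_1}w_3$ is handled carefully, and the final line $4\Re(z_1)\Re(w_3)=|z_2|^2|w_2|^2$ is indeed the heart of the matter.

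That said, the paper takes a different, coordinate-free route. For \eqref{X-eq-1} it invokes the identity
\[
\X(p_1,p_2,p_3,p_4)\,\X(p_2,p_3,p_1,p_4)=\X(p_1,p_3,p_2,p_4)\,e^{2i\A(p_1,p_2,p_3)},
\]
from which $|\X_2|=|\X_1\X_3|$ follows immediately since the Cartan invariant is real; your computation is effectively this identity written in the normalised chart, with the phase $z_1/\overline{z_1}$ playing the role of $e^{2i\A}$. For \eqref{X-eq-2} the paper argues that the $4\times 4$ Gram matrix $\bigl(\la\bp_i,\bp_j\ra\bigr)$ has vanishing determinant because the four lifts lie in $\C^3$, and expanding that determinant yields the relation. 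Your approach is more elementary and makes the dependence on the boundary conditions completely transparent (only \eqref{X-eq-2} needs them, and only through the single product $(2\Re z_1)(2\Re w_3)$). The paper's approach, on the other hand, explains \emph{why} such a relation must exist a priori: four vectors in a three-dimensional space are linearly dependent, so one polynomial relation among their pairwise products is forced.
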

\begin{proof}
Relation \eqref{X-eq-1} is straightforward from the following dentity connecting the 
cross-ratio and the Cartan ratio.
\begin{equation}\label{Xtriple}
\X(p_1,p_2,p_3,p_4)\X(p_2,p_3,p_1,p_4)=\X(p_1,p_3,p_2,p_4)e^{2i\A(p_1,p_2,p_3)}.
\end{equation}
Relation \eqref{X-eq-2} is obtained by writing that for any
choice of lifts $(\bp_i)_{1=i}^4$ of the $p_i$'s, the determinant of the
Gram matrix $\left(\la\bp_i,\bp_j\ra\right)_{1\leq i,j\leq 4)}$ is equal to zero (see for instance chapter 7 of \cite{Go}).
\end{proof}
We see from relation \eqref{Xtriple} that the cross-ratio triple determines the Cartan invariant $\A(p_1,p_2,p_3)$ while it is not 
equal to $\pm \pi/2$. 
\begin{remark}\label{sym-cross-ratio}
If one fixes $\X_1$ and $\X_2$, there are two possible
complex conjugate values for $\X_3$, as $\Re(\X_3)$ and $|\X_3|$ are
given by \eqref{X-eq-1} and \eqref{X-eq-2}. Moreover two complex number $\X_1$ and $\X_2$ can
only be cross ratios for a quadruple of point if and only the
corresponding values of $\X_3$ satisfies $|\Re(\X_3)|\leq |\X_3|$. This condition is equivalent to the double inequality
\begin{equation}\label{compatcross}
\left(|\X_1|-|\X_2|\right)^2\leq 2
\Re\left(\X_1+\X_2\right)-1\leq\left(|\X_1|+|\X_2|\right)^2
\end{equation}
As mentioned by Parker and Platis in \cite{PP}, the change $(\X_1,\X_2,\X_3)\longmapsto(\X_1,\X_2,\overline{\X_3})$ corresponds 
to an involution on the set of ideal tetrahedra that is not induced by an isometry of $\HdC$. Indeed, an isometry would leave the 
three cross-ratios unchanged if it was holomorphic, or would conjugate them all if it was antiholomorphic. Morever, it can be checked
that this involution does not come from a permutation of the four points (all changes in the cross ratios induced by such permutations
are computed in \cite{Wi3}). 
\end{remark}
The cross-ratio triple is a complete system of invariants for the PU(2,1)-orbits of non degenerate ideal tetrahedron.
\begin{theorem}\label{theo-classtetra}
 \begin{enumerate}
\item Two non-degenerate ideal tetrahedra are in the same PU(2,1)-orbit of and only if the have the same cross ratio triple.
\item If $(\X_1,\X_2,\X_3)$ is a triple of complex numbers satisfying relations \eqref{X-eq-1} and \eqref{X-eq-2}, then there 
exists a non-degenerate ideal tetrahedron of which it is the cross-ratio triple if and only if $\X_1$ and $\X_2$ satisfy the 
compatibility relation \eqref{compatcross}.
\end{enumerate}
\end{theorem}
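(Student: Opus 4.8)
The plan is to work entirely in the Siegel model and to extract the whole statement from the normal form \eqref{normaltetra} together with the explicit formulas \eqref{X1}--\eqref{X3}. Since $\Pu$ acts doubly transitively on $\partial\HdC$, any ordered pair of distinct ideal points can be carried to $(q_0,q_\infty)$, i.e.\ to the points $p_1,p_2$ of \eqref{normaltetra}; the residual freedom is then the stabiliser $S$ of this ordered pair. I would first verify that $S$ is exactly the two–parameter group of diagonal maps $\bA_\lambda$ of the shape \eqref{class-conj-loxo} with arbitrary $\lambda\in\C^\ast$ (fixing $q_\infty$ kills Heisenberg translations, fixing $q_0$ as well leaves only rotations and dilations). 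A direct computation of the action of $\bA_\lambda$ on \eqref{normaltetra} gives $z_1\mapsto|\lambda|^2z_1$, $w_3\mapsto|\lambda|^{-2}w_3$, $z_2\mapsto(\overline{\lambda}^2/\lambda)z_2$, $w_2\mapsto(\overline{\lambda}/\lambda^2)w_2$, and shows that $\X_1,\X_2,\X_3$ are all $S$–invariant. This already gives the ``only if'' half of (1), and reduces everything to inverting \eqref{X1}--\eqref{X3}.

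For the ``if'' half of (1), set $P:=\overline{z_1}w_3$ and $Q:=\overline{z_2}w_2$, so that $\X_1=z_1w_3$, $P=\X_2/\X_3$ and $Q=\X_2-1-P$ are all determined by the cross–ratio triple. From $\X_1=z_1w_3$ and $P=\overline{z_1}w_3$ one reads $z_1/\overline{z_1}=\X_1/P$, and relation \eqref{X-eq-1} is exactly what makes this ratio unimodular; hence $\arg z_1$ is determined modulo $\pi$. The sign constraints $\Re z_1\leq0$ and $\Re w_3\leq0$ coming from \eqref{relbord} (not both zero, by non–degeneracy) single out the branch and pin $z_1$ down up to modulus; the dilation in $S$ normalises $|z_1|=1$, after which $w_3=\X_1/z_1$, and \eqref{relbord} forces $|z_2|^2=-2\Re z_1$, $|w_2|^2=-2\Re w_3$. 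Finally the rotation in $S$ normalises $z_2\geq0$ and then $w_2=Q/\overline{z_2}$ is determined. Thus the normalised coordinates are a function of the triple alone, so two tetrahedra with equal triples coincide after normalisation and lie in one $\Pu$–orbit.

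For (2) I would run the same inversion in reverse, starting from an abstract triple satisfying \eqref{X-eq-1} and \eqref{X-eq-2}, and check that each ingredient of the reconstruction is available. Relation \eqref{X-eq-1} again makes $\X_1/P$ unimodular, so $z_1$ with $|z_1|=1$ exists; one sets $w_3=\X_1/z_1$. The key algebraic point is that \eqref{X-eq-2} is equivalent to $4\,\Re z_1\,\Re w_3=|Q|^2$: using $|\X_1|=|P|$ and $\X_3=\X_2/P$ one rewrites \eqref{X-eq-2} as $2\Re(\X_1+\X_2)-1=|\X_2-P|^2=|1+Q|^2$, while substituting $\X_2=1+Q+P$ and $\Re(\X_1+P)=2\,\Re z_1\,\Re w_3$ (from $\X_1+P=(z_1+\overline{z_1})w_3$) turns the left–hand side into $4\,\Re z_1\,\Re w_3+2\Re Q+1$; comparison yields the claimed identity. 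This guarantees $(-2\Re z_1)(-2\Re w_3)=|Q|^2=|\overline{z_2}w_2|^2$, so that putting $|z_2|^2=-2\Re z_1$, $|w_2|^2=-2\Re w_3$ and $w_2=Q/\overline{z_2}$ is consistent with \eqref{relbord}. It remains to guarantee that the candidate points actually lie on $\partial\HdC$, i.e.\ that $\Re z_1\leq0$ and $\Re w_3\leq0$ simultaneously; since their product equals $|Q|^2/4\geq0$ they have a common sign, and \eqref{compatcross} — equivalent by Remark \ref{sym-cross-ratio} to $|\Re\X_3|\leq|\X_3|$, and through the identity above to the triangle inequalities $\bigl||\X_1|-|\X_2|\bigr|\leq|1+Q|\leq|\X_1|+|\X_2|$ — is precisely the condition allowing the branch to be chosen so that both are non-positive while the four points stay distinct and off a common complex line. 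Running the implications backwards, every genuine non-degenerate tetrahedron forces \eqref{compatcross}, which gives the necessity.

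The step I expect to be the main obstacle is the sign/branch bookkeeping in (2): one must check that the single branch forced by $\Re z_1\leq0$ is simultaneously compatible with $\Re w_3\leq0$ and with non-degeneracy, and one must treat with care the boundary cases of \eqref{compatcross}, where $z_2$ or $w_2$ vanishes — a sub-triple then lying in a complex line with extremal Cartan invariant (Proposition \ref{prop-prop-Cartan}) — which must be distinguished from the genuinely excluded case where all four points lie in one complex line. The two algebraic equivalences used above (of \eqref{X-eq-2} with $4\,\Re z_1\,\Re w_3=|Q|^2$, and of \eqref{compatcross} with the common–sign condition) are routine but are where the hypotheses \eqref{X-eq-1}, \eqref{X-eq-2} and the relation \eqref{compatcross} each get used exactly once.
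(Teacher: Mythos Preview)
The paper does not actually prove Theorem~\ref{theo-classtetra}: immediately after the statement it defers to \cite{FalJDG,PP,Wi3} for part~(1) and to \cite{PP} for part~(2). Your argument is exactly the one carried out in those references --- normalise the first two vertices to $q_0,q_\infty$, identify the residual stabiliser with the diagonal group $\{\bA_\lambda:\lambda\in\C^\ast\}$, and invert \eqref{X1}--\eqref{X3} modulo this action --- and your computations (the $S$--invariance of $\X_1,\X_2,\X_3$, and the equivalence of \eqref{X-eq-2} with $4\,\Re z_1\,\Re w_3=|Q|^2$) are correct.

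One point deserves tightening. You present \eqref{compatcross} as the condition permitting the choice of branch of $z_1$ with $\Re z_1\le 0$ and $\Re w_3\le 0$ simultaneously. But this branch choice is free: replacing $z_1$ by $-z_1$ replaces $w_3=\X_1/z_1$ by $-w_3$, so both real parts flip together, and since their product $|Q|^2/4\ge 0$ is already forced by \eqref{X-eq-2}, one of the two branches always works. The actual role of \eqref{compatcross} is simpler than you make it: once $\X_3$ is \emph{given} as a complex number satisfying \eqref{X-eq-1} and \eqref{X-eq-2}, the tautology $|\Re\X_3|\le|\X_3|$ yields \eqref{compatcross} automatically, so the ``only if'' direction is immediate and the ``if'' direction is precisely your reconstruction, which requires nothing beyond the two relations. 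In other words, \eqref{compatcross} is not an additional constraint on the triple $(\X_1,\X_2,\X_3)$ but the reformulation, in terms of $\X_1$ and $\X_2$ alone, of the existence of such an $\X_3$; the theorem's ``if and only if'' is slightly redundant in this respect. Your flagged boundary cases ($z_2=0$ or $w_2=0$, i.e.\ three of the four points on a common complex line) do need separate handling, since the formula $w_2=Q/\overline{z_2}$ degenerates there, but the remaining rotational freedom in $S$ then acts nontrivially on the other variable and the reconstruction still goes through.
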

Proofs of the first part can be found in \cite{FalJDG,PP,Wi3}. In these, the fact that the tetrahedron is non degenerate is often 
implicitely used without being stated (this omission is made for instance in \cite{Wi3}). The second part can be found in \cite{PP}.

 We will also make use of the quadruple ratio ${\bf Q}$ which is defined by
$${\bf Q}(p_1,p_2,p_3,p_4)=\dfrac{\la\bp_1,\bp_2\ra\la\bp_2,\bp_3\ra\la\bp_3,\bp_4\ra\la\bp_4,\bp_1\ra}{\la\bp_1,\bp_4\ra\la\bp_4,\bp_3\ra\la\bp_3,\bp_2\ra\la\bp_2,\bp_1\ra}$$
A direct verification shows that the quadruple ratio $\bf Q$ satisfies the following relations
\begin{align}\label{relation-Q}
{\bf Q}(p_1,p_2,p_3,p_4) & =  {\bf T}(p_1,p_2,p_3)\cdot{\bf T}(p_1,p_3,p_4)\nonumber\\
 & = \dfrac{\X(p_1,p_2,p_3,p_4)}{\overline{\X(p_1,p_2,p_3,p_4}}. 
\end{align}
\subsection{Complex cross ratio and eigenvalues.\label{section-cross-ratio-eigenvalue}}
One often has to consider configurations of points that arise as fixed points of isometries. Taking lifts to $\C^3$ and SU(2,1), 
one obtains eigenvectors of matrices, and eigenvalues. It is interesting to relate the projective invariants of these 
configurations to the eigenvalues associated to the vectors lifting these fixed points. As an example, the following lemma 
can be found in \cite{ParkWi}. It provides a connection between the geometry of the fixed 
points of a pair of isometries and the associated eigenvalues. In this section, each time we will consider an isometry $A$, 
we will mean by $p_A$ a fixed point of $A$. When needed $\bA$ and $\bp_A$ will stand for lifts of $A$ to SU(2,1) and of $p_A$ to $\C^3$.
\begin{definition}
Let $A$ and $B$ be two elements of PU(2,1). We say that a 4-tuple of fixed points in $\overline{\HdC}$ $(p_A,p_B,p_{AB},p_{BA})$ 
is compatible if it satisfies $Bp_{AB}=p_{BA}$ and $Ap_{BA}=p_{AB}$.
\end{definition}
When $A$, $B$, $AB$ and $BA$ all have a unique fixed point the compatibility condition is empty. If for instance 
$AB$ and $BA$ are loxodromic we require here that $p_{AB}$ and $p_{BA}$ are either both repulsive or both attractive.
\begin{lemma}\label{lem-cross-ratio-eigenvalues}
Let $A$ and $B$ be two elements of PU(2,1), and let $(p_A,p_B, p_{AB},p_{BA})$ be a compatible 4-tuple of fixed points. 
Fix lifts of $A$ and $B$ given by $\bA$ and $\bB$ in SU(2,1). For $\bC\in\{\bA,\bB,\bA\bB,\bB\bA\}$ denote by $\lambda_\bC$ the 
eigenvalue of $\bC$ associated to $p_C$. Then
\begin{equation}\X(p_A,p_B,p_{AB},p_{BA})=\dfrac{1}{\overline{\lambda_A}\overline{\lambda_B}\lambda_{AB}}.
\label{Xcross}\end{equation}
\end{lemma}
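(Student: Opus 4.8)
The plan is to fix a single coherent system of lifts for all four fixed points and then reduce everything to the invariance of the Hermitian form under SU(2,1) together with the eigenvalue relations. I would begin by choosing eigenvector lifts $\bp_A,\bp_B,\bp_{AB}$, so that $\bA\bp_A=\lambda_A\bp_A$, $\bB\bp_B=\lambda_B\bp_B$ and $(\bA\bB)\bp_{AB}=\lambda_{AB}\bp_{AB}$. The whole point is that the compatibility hypothesis lets me define $\bp_{BA}$ in terms of the others rather than as an independent choice: since $Bp_{AB}=p_{BA}$, the vector $\bB\bp_{AB}$ is a lift of $p_{BA}$, so I set $\bp_{BA}:=\bB\bp_{AB}$. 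Since the cross ratio is lift-independent, this normalisation is harmless.

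First I would record the two consequences of this choice. From $\bB\bA=\bB(\bA\bB)\bB^{-1}$ one gets $(\bB\bA)\bp_{BA}=\bB(\bA\bB)\bp_{AB}=\lambda_{AB}\bp_{BA}$, so $\bp_{BA}$ is indeed the eigenvector of $\bB\bA$ attached to $p_{BA}$ and, crucially, $\lambda_{BA}=\lambda_{AB}$ (the two products are conjugate, hence isospectral); this is why only $\lambda_{AB}$ appears in the statement. The second compatibility relation $Ap_{BA}=p_{AB}$ is then automatic at the level of lifts, since $\bA\bp_{BA}=(\bA\bB)\bp_{AB}=\lambda_{AB}\bp_{AB}$.

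Next I would expand the cross ratio from its definition and regroup it as a product of two ratios,
$$\X(p_A,p_B,p_{AB},p_{BA})=\frac{\la\bp_{AB},\bp_A\ra}{\la\bp_{BA},\bp_A\ra}\cdot\frac{\la\bp_{BA},\bp_B\ra}{\la\bp_{AB},\bp_B\ra}.$$
For the second factor I substitute $\bp_{BA}=\bB\bp_{AB}$ and slide $\bB$ across using $\la\bB u,v\ra=\la u,\bB^{-1}v\ra$ (the defining isometry property of $\bB\in{\rm SU}(2,1)$), then use $\bB^{-1}\bp_B=\lambda_B^{-1}\bp_B$ and the conjugate-linearity of $\la\cdot,\cdot\ra$ in its second slot; this yields $\la\bp_{BA},\bp_B\ra=\overline{\lambda_B}^{-1}\la\bp_{AB},\bp_B\ra$, so that factor collapses to $1/\overline{\lambda_B}$. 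For the first factor I write $\bp_{AB}=\lambda_{AB}^{-1}\bA\bp_{BA}$ and slide $\bA$ across the same way, picking up $\lambda_{AB}^{-1}$ from the first slot and $\overline{\lambda_A}^{-1}$ from $\bA^{-1}\bp_A=\lambda_A^{-1}\bp_A$ in the second slot; this gives $1/(\lambda_{AB}\overline{\lambda_A})$. Multiplying the two factors produces exactly $1/(\overline{\lambda_A}\,\overline{\lambda_B}\,\lambda_{AB})$.

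The computation itself is short; the only genuine care needed is bookkeeping. The main subtlety, rather than a real obstacle, is making sure the compatibility hypothesis is what legitimises setting $\bp_{BA}=\bB\bp_{AB}$ with the matching eigenvalue: without it one could not guarantee that $p_{AB}$ and $p_{BA}$ correspond to the same (for instance, both attracting) eigendirection of the conjugate pair $\bA\bB,\bB\bA$, and the formula would pick up the wrong eigenvalue. One must also fix the sesquilinearity convention once and for all (conjugate-linear in the second argument), since this is precisely what places the conjugations on $\lambda_A$ and $\lambda_B$ but leaves $\lambda_{AB}$ unconjugated.
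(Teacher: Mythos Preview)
Your proof is correct and follows essentially the same approach as the paper: both arguments push $\bA$ and $\bB$ across the Hermitian pairing via its ${\rm SU}(2,1)$-invariance to convert the four brackets into eigenvalue factors. The only cosmetic difference is that the paper keeps arbitrary lifts and introduces scalars $z,w$ with $\bA\bp_{BA}=z\bp_{AB}$, $\bB\bp_{AB}=w\bp_{BA}$ (so $zw=\lambda_{AB}$), whereas you normalise $\bp_{BA}:=\bB\bp_{AB}$ upfront, which amounts to setting $w=1$ and $z=\lambda_{AB}$.
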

Note that the right hand side do not depend of the choices made for the lifts of $A$ and $B$.
\begin{proof}
 For each of the four fixed points involved, we fix a lift to $\C^{3}$ and obtain four vectors $\bp_A$, $\bp_B$, 
$\bp_{AB}$ and $\bp_{BA}$. Because $A$ maps $p_{BA}$ to $p_{AB}$ and $B$ maps $p_{AB}$ to $p_{BA}$, there exist two complex numbers
$z$ and $w$ such that 
\begin{equation}\label{eqident}{\bf A}\bp_{BA}=z\bp_{AB}\mbox{ and }{\bf B}\bp_{AB}=w\bp_{BA}.\end{equation} 
The eigenvalue of $\bA\bB$ associated to $\bp_{AB}$ is $zw$. Then 
\begin{eqnarray*}
 \X(p_A,p_B,p_{AB},p_{BA})&=&\dfrac{\la\bp_{AB},\bp_{A}\ra\la\bp_{BA},\bp_{B}\ra}{\la\bp_{AB},\bp_{B}\ra\la\bp_{BA},\bp_{A}\ra}\nonumber\\
& = & \dfrac{\la\bp_{AB},\bp_{A}\ra\la\bp_{BA},\bp_{B}\ra}{\la{\bf B}\bp_{AB},{\bf B}\bp_{B}\ra\la{\bf A}\bp_{BA},{\bf A}\bp_{A}\ra}\nonumber\\
& = & \dfrac{1}{\overline{\lambda_A}\overline{\lambda_B}\lambda_{AB}}\mbox{, by using \eqref{eqident}.}
\end{eqnarray*}
\end{proof}
Identity \eqref{Xcross} is specially nice when $A$, $B$ and $C=(AB)^{-1}$ are parabolic. Indeed, in that case eigenvalues $\lambda_A$, 
$\lambda_B$ and $\lambda_C$ have unit modulus, and one obtains
\begin{equation}\label{Xcross-parab}
 \X(p_A,p_B,p_{AB},p_{BA})=\lambda_A\lambda_B\lambda_C.
\end{equation}
Viewing the group $\la A,B\ra$ as a representation to PU(2,1) of the 3 punctured sphere, we can relate \eqref{Xcross-parab} to the
Toledo invariant of this representation. Indeed, taking arguments on both sides, we obtain
\begin{equation}
 \A(p_A,p_B,p_{AB})+\A(p_A,p_{BA},p_B)=\arg(\lambda_A)+\arg(\lambda_B)+\arg(\lambda_C) \mod 2\pi\label{tol}
\end{equation}
The left hand side of \eqref{tol} is equal to the integral over the finite area 3 punctured sphere of the pull back of the 
K\"ahler form of $\HdC$ by an equivariant map from the Poincar\'e disc to $\HdC$, which is equal to the Toledo invariant 
(see \cite{BuIoWi,KoMa,Tol} for general definitions and \cite{GuP2} for calculations in this specific frame). 
In this very special case, this relation contains the same information as Lemma 8.2 of \cite{BuIoWi}, which connects in a much 
broader context the Toldedo invariant to the rotation numbers of images of the peripheral curves by a representation. 
Identity \eqref{Xcross} can be generalised to larger genus surfaces. To do so we will use a more symmetric identity given by 
Proposition \ref{prop-Q-eigenvals} below. It is a straightforward consequence of Lemma \ref{lem-cross-ratio-eigenvalues}, 
and the properties of the quadruple ratio \eqref{relation-Q}.
\begin{proposition}\label{prop-Q-eigenvals}
 Let $\pi_{0,3}\sim\la a,b,c\vert abc=1\ra$ be the fundamental group of the 3 punctured sphere. Let $\rho$ be a 
representation of $\pi_{0,3}$ in PU(2,1). Denoting $A=\rho(a)$, $B=\rho(b)$ and $C=\rho(c)$, let $(p_A,p_B,p_{C},p_{BCB^{-1}}$ 
be a compatible 4-tuple of fixed points for the pair $(A,B)$, and let $\lambda_A$, $\lambda_B$, $\lambda_C$ be the associated eigenvalues. Then
\begin{equation}
{\bf Q}(p_A,p_B,p_C,p_{BCB^{-1}})= \dfrac{\lambda_A\lambda_B\lambda_C}{\overline{\lambda_A\lambda_B\lambda_C}}
\end{equation}
\end{proposition}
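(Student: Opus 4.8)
The plan is to recognise the 4-tuple $(p_A,p_B,p_C,p_{BCB^{-1}})$ as a compatible 4-tuple of the form $(p_A,p_B,p_{AB},p_{BA})$ and then quote Lemma \ref{lem-cross-ratio-eigenvalues} together with the quadruple-ratio identity \eqref{relation-Q}. First I would exploit the relation $abc=1$ in $\pi_{0,3}$, which gives $C=(AB)^{-1}$ and, after a one-line computation, $BCB^{-1}=A^{-1}B^{-1}=(BA)^{-1}$. Consequently $p_C$, being fixed by $C$, is also fixed by $AB$, and $p_{BCB^{-1}}$, being fixed by $BCB^{-1}$, is also fixed by $BA$. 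I may therefore set $p_{AB}=p_C$ and $p_{BA}=p_{BCB^{-1}}$. The compatibility condition required by the lemma, namely $Bp_{AB}=p_{BA}$ and $Ap_{BA}=p_{AB}$, is then exactly the hypothesis that $(p_A,p_B,p_C,p_{BCB^{-1}})$ is compatible. I would also record that if $\bC\bp_C=\lambda_C\bp_C$ then $(BCB^{-1})(B\bp_C)=\lambda_C\,(B\bp_C)$, so the eigenvalue attached to $p_{BCB^{-1}}=Bp_C$ is again $\lambda_C$.

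Next I would track the eigenvalues carefully. Fixing lifts $\bA,\bB\in$ SU(2,1) and taking the lift $\bC=(\bA\bB)^{-1}$ (compatible with $abc=1$), the eigenvalue $\lambda_{AB}$ of $\bA\bB$ at the common fixed point $p_{AB}=p_C$ is the reciprocal of the eigenvalue of $\bC=(\bA\bB)^{-1}$ at the same eigenvector, that is $\lambda_{AB}=1/\lambda_C$. Substituting $p_{AB}=p_C$, $p_{BA}=p_{BCB^{-1}}$ and $\lambda_{AB}=1/\lambda_C$ into \eqref{Xcross} would then give
\begin{equation}
\X(p_A,p_B,p_C,p_{BCB^{-1}})=\dfrac{1}{\overline{\lambda_A}\,\overline{\lambda_B}\,\lambda_{AB}}=\dfrac{\lambda_C}{\overline{\lambda_A}\,\overline{\lambda_B}}.
\end{equation}

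Finally I would invoke the quadruple-ratio identity \eqref{relation-Q}, which expresses the quadruple ratio as a cross-ratio divided by its conjugate, ${\bf Q}(p_A,p_B,p_C,p_{BCB^{-1}})=\X/\overline{\X}$. Plugging in the value of $\X$ just obtained and simplifying would yield
\begin{equation}
{\bf Q}(p_A,p_B,p_C,p_{BCB^{-1}})=\dfrac{\lambda_C/(\overline{\lambda_A}\,\overline{\lambda_B})}{\overline{\lambda_C}/(\lambda_A\lambda_B)}=\dfrac{\lambda_A\lambda_B\lambda_C}{\overline{\lambda_A\lambda_B\lambda_C}},
\end{equation}
which is the desired formula.

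The computation itself is short, so the only genuine care needed is the bookkeeping: matching the fixed points $p_C\leftrightarrow p_{AB}$ and $p_{BCB^{-1}}\leftrightarrow p_{BA}$ to the hypotheses of Lemma \ref{lem-cross-ratio-eigenvalues}, and pinning down the lift of $C$ so that $\lambda_{AB}=1/\lambda_C$ holds exactly. I expect this normalisation to be the main, and essentially the only, obstacle, since rescaling $\bC$ by a cube root of unity would rescale the right-hand side; the identity implicitly fixes $\bC=(\bA\bB)^{-1}$. Once this is settled, the statement follows by direct substitution.
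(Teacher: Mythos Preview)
Your proof is correct and follows precisely the route the paper indicates: identify $p_C$ with $p_{AB}$ and $p_{BCB^{-1}}$ with $p_{BA}$ via $C=(AB)^{-1}$ and $BCB^{-1}=(BA)^{-1}$, apply Lemma \ref{lem-cross-ratio-eigenvalues} with $\lambda_{AB}=1/\lambda_C$, and then feed the resulting cross-ratio into \eqref{relation-Q}. The paper itself does not spell out the argument beyond calling it a straightforward consequence of these two ingredients, so your write-up is in fact more detailed than the original.
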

Let $\Sigma_{g,p}$ be a oriented surface of genus $g$ with $p$ punctures, where $p\geqslant 0$, and denote by $\pi_{g,p}$ its 
fundamental group, given by 
$$\pi_{g,p}=\la a_1,b_1,\cdots,a_g,b_g,c_1\cdots c_p\vert \prod_{i=1}^g[a_i,b_i]\prod_{j=1}^pc_j=1\ra,$$ 
where the $c_j$'s are homotopy classes of loops enclosing the punctures. 
We make once and for all the choice that for a three punctured sphere the orientation of the peripheral loops 
is chosen so that the surfaces is on the right of each of these loops. Fix a pair of pants decomposition of 
$\Sigma_{g,p}=\bigcup_{i=1}^{2g-2+n} \mathcal{P}_i$, or equivalently a maximal collection of oriented simple 
closed curves on $\Sigma_{g,p}$. A representation $\rho$ of $\pi_{g,p}$ in PU(2,1) induces representations  $\rho_i$ of each 
of the fundamental groups of the $\mathcal{P}_i$'s which satisfies the following conditions.
\begin{enumerate}
 \item If two pairs of pants $\mathcal{P}_i$ and $\mathcal{P}_j$ for $i\neq j$ are glued along a common peripheral curve $\gamma$ then 
$\rho_i(\gamma)=\rho_j(\gamma)^{-1}$.
\item If two peripheral curves $\gamma$ and $\gamma'$ of a pant $\mathcal{P}_i$ are glued together to produce a handle in 
$\Sigma_{g,p}$, then $\rho_i(\gamma)$ is conjugate to $\rho_i(\gamma')^{-1}$
\end{enumerate}
These conditions follow from the convention we have taken for orintation, and correspond to the reconstruction of the group 
$\rho(\pi_{g,p})$ from the groups $\rho_i(\pi_1(\mathcal{P}_i))$ by amalgamated products and HNN extensions 
(see Remark \ref{CHFN} below). To each of the representations $\rho_i$ is associated a quadruple ratio ${\bf Q}_i$ as 
in Proposition \ref{prop-Q-eigenvals}.
\begin{theorem}\label{theo-identity}
Let $\Sigma_{g,p}$ be an oriented surface of genus $g$ with $p$ punctures. 
For any pair of pants decomposition of $\Sigma_{g,p}=\bigcup_{i=1}^{2g-2+n} \mathcal{P}_i$ and any representation $\rho$ of 
$\pi_1(\Sigma_g,p)$ the following identity holds.
\begin{equation}\label{identity}
 \prod_{i=1}^{2g-2+n}{\bf Q}_i=\prod_{j=1}^{p}\dfrac{\lambda_{c_j}}{\overline{\lambda_{c_j}}},
\end{equation}
where $\lambda_{c_j}$ is the eigenvalue associated to any fixed point of $\rho(c_j)$ in $\overline{\HdC}$.
\end{theorem}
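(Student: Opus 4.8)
The plan is to upgrade the one-pant computation of Proposition \ref{prop-Q-eigenvals} into a global telescoping identity over the gluing graph of the decomposition. Since $\Sigma_{g,p}$ has punctures (for $p\geq 1$ the group $\pi_{g,p}$ is free), I would first fix once and for all a lift $\widehat{\rho}$ of $\rho$ to SU(2,1); this is the device that removes the cube-root-of-unity ambiguities carried by individual eigenvalues. With such a lift the boundary relation of each pant holds on the nose, so for each $\mathcal{P}_i$ with boundary eigenvalues $\lambda_{i,1},\lambda_{i,2},\lambda_{i,3}$ (read off at the chosen compatible fixed points from $\widehat\rho$) I may set $\mu_i=\lambda_{i,1}\lambda_{i,2}\lambda_{i,3}$ and rewrite Proposition \ref{prop-Q-eigenvals} in the form $\mathbf{Q}_i=\mu_i/\overline{\mu_i}$. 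Multiplying over all pants gives at once
$$\prod_{i=1}^{2g-2+p}\mathbf{Q}_i=\frac{\prod_i\mu_i}{\overline{\prod_i\mu_i}},$$
so the whole theorem reduces to evaluating the single scalar $\prod_i\mu_i$, which is an unordered product of one eigenvalue per boundary slot of each pant.

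Next I would reorganise $\prod_i\mu_i$ according to the curves of the decomposition. Every boundary slot belongs to a curve, and each curve is either one of the $p$ peripheral curves $c_j$ of $\Sigma_{g,p}$ (occupying a single slot) or an interior curve (occupying two slots, either in two distinct pants glued along it or in a single pant whose two free boundaries form a handle). The orientation conventions fixed before the statement give the gluing rules (1) and (2). Because $\widehat\rho$ is a genuine homomorphism, along a shared interior curve $\gamma$ one has the matrix identity $\widehat{\rho_i(\gamma)}=\widehat{\rho_j(\gamma)}^{-1}$, and along a handle curve $\widehat{\rho_i(\gamma)}$ equals $\widehat\rho(w)\,\widehat{\rho_i(\gamma')}^{-1}\,\widehat\rho(w)^{-1}$ for the word $w$ realising the handle. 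Since an element and its inverse share fixed points, and the eigenvalue of $G^{-1}$ at a fixed point is the reciprocal of that of $G$, the two eigenvalue factors attached to an interior curve are exact reciprocals once the two fixed points are chosen to correspond (the same point in the glued case, the point transported by $\widehat\rho(w)$ in the handle case). Their product is therefore exactly $1$, with no residual cube root of unity.

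Granting this, every interior curve contributes a cancelling pair $\lambda\cdot\lambda^{-1}=1$ to $\prod_i\mu_i$, while each peripheral curve $c_j$ contributes the single factor $\lambda_{c_j}$. Hence $\prod_i\mu_i=\prod_{j=1}^p\lambda_{c_j}$ and
$$\prod_{i=1}^{2g-2+p}\mathbf{Q}_i=\frac{\prod_{j}\lambda_{c_j}}{\overline{\prod_{j}\lambda_{c_j}}}=\prod_{j=1}^{p}\frac{\lambda_{c_j}}{\overline{\lambda_{c_j}}},$$
which is the identity \eqref{identity}.

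The step I expect to be the main obstacle is the bookkeeping that makes each interior pair cancel exactly. Two points deserve care. First, one must know that a global SU(2,1)-lift of $\rho$ exists and that restricting it to each pant produces lifts satisfying both the pant relation and the gluing relations; without this the interior factors would only cancel up to a cube root of unity, and the residual factor would have to be shown trivial. Second, in the handle case the boundary elements are merely conjugate, so one must verify that the conjugating isometry arising in the HNN reconstruction carries the chosen fixed point of one boundary to the chosen fixed point of the other, thereby selecting the correct pair of reciprocal eigenvalues rather than another pair from the (three-element) spectrum of a regular elliptic. Establishing that the compatibility hypotheses of Proposition \ref{prop-Q-eigenvals} propagate consistently across the whole decomposition is the technical heart of the argument.
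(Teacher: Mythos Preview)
Your argument is essentially the same as the paper's: apply Proposition \ref{prop-Q-eigenvals} to each pant and observe that the eigenvalue contributions from interior curves cancel in pairs by the gluing conditions (1) and (2), leaving only the peripheral terms. The paper's proof is a three-sentence sketch that works directly with the factors $\lambda/\bar\lambda$ and does not mention the SU(2,1)-lift or the cube-root bookkeeping; your explicit use of a global lift $\widehat\rho$ (available since $\pi_{g,p}$ is free for $p\geq 1$) and your care with the handle case are precisely the details one would supply to make that sketch rigorous.
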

If $A$ is loxodromic, then its eigenvalues associated to its fixed points in $\partial\HdC$ are $\lambda$ and $1/\bar\lambda$. 
If $A$ is a complex reflections, all its fixed points in $\overline{\HdC}$ are associated to the same eigenvalue. These are the only 
two cases where an isometry can have more than one fixed point in $\overline{\HdC}$. We see thus that the contribution of $\rho(c_j)$ 
to the right hand side product of \eqref{identity} does not depent on the chosen fixed point.
\begin{proof}
In view of Proposition \ref{prop-Q-eigenvals}, the product $\prod_{i=1}^{2g-2+n}{\bf Q}_i$ is equal to the product of all 
$\lambda/\bar\lambda$, where $\lambda$ runs along all eigenvalues of images of the simple curves in the pant decomposition 
under the representations $\rho_i$. Because of conditions 1 and 2 above, we see that each non peripheral curve contributes 
to this product by $1$. The result follows.
\end{proof}
\begin{remark}\label{CHFN}
The idea behind the sketch of proof above is the use of a complex hyperbolic analogue of the Fenchel-Nielsen coordinates 
for hyperbolic surfaces, which is a very natural way to pass from 2-generator groups to surface groups.
Such an analogue has been described by Parker and Platis in \cite{PP} (see also section 4.6 of the survey article 
\cite{Parktraces}). The first ingredient is to describe moduli for representations of 3-punctured spheres. Using 
a pant decomposition of of a surface $\Sigma$, one needs then to provide \textit{gluing parameters} in order to
combine together such representations to obtain a representation of the fundamental group of the whole surface.
The gluing parameters used by Parker and Platis are cross-ratios and eigenvalues, and are interpreted in as twist-bend 
parameters,  in a similar way as in \cite{Kourou,Tan} for the case of PSL(2,$\C$).
\end{remark}
\section{Classification of pairs in SU(2,1) by traces. \label{section-conjug}}
It is a classical fact from invariant theory that the ring of polynomials on the product of $p$ copies of SL($n$,$\C$) that
are invariant under the action of SL(n,$\C$) by diagonal conjugation is generated by the polynomials of the form 
$\tr X_{i_1}\cdots X_{i_k}$. Morever, this ring is finitely generated and in fact it suffices to consider words of 
length  $k\leqslant n^2$. We refer the reader to \cite{Probook} for general information on this topic. Our goal here is to 
expose an explicit result in the case where $p=2$ and $n=3$ and to specialise it to the real form SU(2,1) of SL(3,$\C$).
We first recall the main results concerning the case of SL(2,$\C$).
 All the material necessary to prove
the results we expose here on the SL(3,$\C$) case can be found in Chapter 10 of \cite{Fo}, which actually follows \cite{Wen}. 
The SL(3,$\C$)-trace equation for pairs of matrices given in Proposition \ref{S and P exist} below has been rediscovered by various 
authors, among which \cite{KH,Law,Wi3,Wi4}. A good survey on the question of traces in the specific case of SU(2,1) is 
\cite{Parktraces}, where all computations are made explicit. 
\subsection{Traces in SL(2,$\C$)}
It is classical to classify pairs of matrices in SL(2,$\C$) by traces. The basic identity is the following.
If $\bA$ and $\bB$ belong SL(2,$\C$) then the following trace identity holds
\begin{equation}\label{traceSL2}
\tr \bA\tr \bB = \tr \bA\bB + \tr \bA^{-1}\bB.
\end{equation}
Relation \eqref{traceSL2} is a direct consequence of the Cayley-Hamilton identity. The following result is central
in the study of the characters of representations of the free group of rank 2 in SL(2,$\C$). It goes back to Vogt \cite{Vo} 
and Fricke-Klein \cite{FrKl1,FrKl2}, and we refer to the survey article \cite{Gotraces} for a modern exposition oriented toward 
the description of the character varieties of small punctured surfaces.
Denote by $\mathcal{R}_2$=$\C[\mbox{SL(2,$\C$)$\times$SL(2,$\C$)}]^{{\mbox{\tiny{SL(2,$\C$)}}}}$ the ring of conjugacy 
invariant polynomials on SL(2,$\C$)$\times$SL(2,$\C$).
\begin{theorem}\label{theoFV}
\begin{enumerate}
\item Any element of $\mathcal{R}_2$ is a polynomial in $\tr \bA$, $\tr \bB$
  and $\tr \bA\bB$.
\item The map
\begin{align}
\Psi_2 : \mbox{SL(2,$\C$)$\times$ SL(2,$\C$)} &\longrightarrow \C^3\nonumber\\
(\bA,\bB) & \longmapsto  \left(\tr \bA,\tr \bB,\tr \bA\bB\right)\nonumber
\end{align}
is surjective.
\item Two irreducible pairs $(\bA,\bB)$ and $(\bA',\bB')$ of elements of
  SL(2,$\C$) are conjugate if and only if $\Psi_2(\bA,\bB)=\Psi_2(\bA',\bB')$.
\end{enumerate}
\end{theorem}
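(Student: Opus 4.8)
The plan is to handle the three assertions in increasing order of difficulty, treating surjectivity first, then the conjugacy classification, and finally the structure of the invariant ring $\mathcal{R}_2$, which I expect to be the real obstacle. Throughout, I will use the fundamental trace identity \eqref{traceSL2} and the Cayley--Hamilton relation, which for $M\in\mathrm{SL}(2,\C)$ reads $M^{-1}=(\tr M)I-M$ and in particular gives $\tr M^{-1}=\tr M$.

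For part (2) I would exhibit an explicit pair realising any prescribed triple $(x,y,z)\in\C^3$. Setting
\[
\bA=\begin{bmatrix} x & -1\\ 1 & 0\end{bmatrix},\qquad \bB=\begin{bmatrix} y & b\\ c & 0\end{bmatrix},
\]
one has $\det\bA=1$, $\tr\bA=x$ and $\tr\bB=y$, while $\det\bB=1$ forces $bc=-1$. A direct computation gives $\tr\bA\bB=b-c+xy$, so it remains to solve $b-c=z-xy$ together with $bc=-1$; eliminating $c$ yields the quadratic $b^2-(z-xy)b+1=0$, which has a complex root. Hence $\Psi_2$ is surjective.

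For part (3) the decisive input is irreducibility, meaning that $\bA$ and $\bB$ share no common eigenvector: a reducible pair is conjugate to a pair of upper triangular matrices, on which the three traces manifestly fail to separate conjugacy classes. I would first note that in an irreducible pair neither matrix is scalar, so each is either diagonalisable with distinct eigenvalues or a single nontrivial Jordan block. I would then normalise $\bA$ by conjugation. If $\bA$ is diagonalisable, conjugate it to $\mathrm{diag}(\lambda,\lambda^{-1})$ with $\lambda\neq\lambda^{-1}$; writing $\bB=\left(\begin{smallmatrix} a & b\\ c & d\end{smallmatrix}\right)$, irreducibility forces $b\neq 0$ and $c\neq 0$, and the centraliser of $\bA$ (the diagonal torus, acting by $b\mapsto t^2 b$) lets me set $b=1$. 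The equations $\tr\bB=a+d$ and $\tr\bA\bB=\lambda a+\lambda^{-1}d$ then determine $a,d$, the linear system being invertible exactly because $\lambda\neq\lambda^{-1}$, after which $\det\bB=1$ fixes $c$. If $\bA$ is parabolic, conjugate it to $\pm\left(\begin{smallmatrix}1 & 1\\ 0 & 1\end{smallmatrix}\right)$ and run the analogous argument with the unipotent centraliser, irreducibility now forcing the lower-left entry of $\bB$ to be nonzero. In every case the normal form is pinned down by $(\tr\bA,\tr\bB,\tr\bA\bB)$ alone (the residual choice of $\lambda$ versus $\lambda^{-1}$ being realised by a permutation conjugacy), so two irreducible pairs with the same image under $\Psi_2$ are conjugate.

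Part (1) is where the essential work lies, and it is the step I expect to be hardest. I would begin from the first fundamental theorem of invariant theory for $\mathrm{SL}(2,\C)$ acting by simultaneous conjugation, quoted from \cite{Probook}, which guarantees that every element of $\mathcal{R}_2$ is a polynomial in the traces of words in $\bA,\bB,\bA^{-1},\bB^{-1}$. Since $\bA^{-1}=(\tr\bA)I-\bA$, such words reduce to traces of positive words in $\bA,\bB$, so the remaining task is purely algebraic: show that every trace of a positive word is a polynomial in $\tr\bA$, $\tr\bB$ and $\tr\bA\bB$. I would argue by induction on word length, using cyclic invariance together with \eqref{traceSL2} in the form $\tr(XY)=\tr X\,\tr Y-\tr(X^{-1}Y)$ to split products and eliminate repeated letters (via $M^2=(\tr M)M-I$), rewriting any long word trace as a combination of traces of strictly shorter words, with base cases $\tr\bA$, $\tr\bB$, $\tr\bA\bB$ and $\tr I=2$. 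The main obstacle is precisely the bookkeeping of this reduction: one must verify that the recursion genuinely terminates for an arbitrary word and that no new independent invariant can slip through, which is exactly what upgrades $\{\tr\bA,\tr\bB,\tr\bA\bB\}$ from a convenient to a \emph{complete} generating set.
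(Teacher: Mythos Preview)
The paper does not prove Theorem~\ref{theoFV}: it is quoted as a classical result of Vogt and Fricke--Klein, with a reference to Goldman's survey \cite{Gotraces} for a modern treatment. So there is no argument in the paper to compare yours against; you are supplying what the paper deliberately omits.

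Your sketch is essentially correct and follows the standard line. A few comments. For part~(2) your explicit pair works; note that the quadratic $b^2-(z-xy)b+1=0$ never has $b=0$ as a root, so the side condition $bc=-1$ is automatically satisfiable. For part~(3) your normalisation argument is the usual one; the only point to tidy is the parabolic case, where you should say explicitly that the unipotent centraliser acts on the diagonal entries of $\bB$ by $a\mapsto a+sc$, $d\mapsto d-sc$, so one may normalise (say) $a=0$, after which $d=\tr\bB$, $c=\tr\bA\bB-\tr\bB$ (nonzero by irreducibility) and $b=-1/c$ are all determined. For part~(1) your plan is right but the induction can be made much cleaner than ``bookkeeping'': first use $M^2=(\tr M)M-I$ to reduce every positive word to a $\Z[\tr\bA,\tr\bB]$-linear combination of \emph{alternating} words $\bA\bB\bA\bB\cdots$; by cyclic invariance their traces are $\tr(\bA\bB)^k$ or $\tr\bigl((\bA\bB)^k\bA\bigr)$, and applying the same identity with $M=\bA\bB$ reduces the exponent $k$. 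This makes the termination transparent and removes the worry you flag about a new invariant ``slipping through''.
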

Among conjugacy invariant fonctions that appear naturally is the trace of the commutator. It is a simple exercise 
using \eqref{traceSL2} to check that 
\begin{equation}
\tr [\bA,\bB] =  Q(\tr \bA,\tr \bB,\tr \bA\bB)\label{tracecomSL2}
\end{equation}
where $Q\left(x,y,z\right)=x^2+y^2+z^2-xyz-2$.  This particular polynomial plays an important role in the study of the 
SL(2,$\C$)-character varietes for small surfaces, like the 1-punctured torus or the 4-holed sphere (see for instance 
\cite{Gotraces,Bow2}).
\subsection{The trace equation in SL(3,$\C$).}
Relations \eqref{traceSL2} and \eqref{tracecomSL2} can be generalised to SL($3$,$\C$) as follows.
\begin{proposition}\label{S and P exist}
There exist two polynomials $S$ and $P$ in $\mathbb{Z}[x_1,\dots, x_8]$ such
that for any pair of matrices $(\bA,\bB)\in$ SL(3,$\C$)$\times$SL(3,$\C$),
the two traces $\tr [\bA,\bB]$ and $\tr [\bA^{-1}\bB]$ are the roots of the
quadratic equation
\begin{equation}\label{trace equation}
X^2-s X + p=0,
\end{equation}
where $s=S({\bf \tau})$, $p=P({\bf \tau})$ and 
$${\bf \tau}=\left(\tr
\bA,\tr \bB,\tr \bA\bB,\tr \bA^{-1}\bB,\tr \bA^{-1},\tr \bB^{-1},\tr \bB^{-1}\bA^{-1},\tr
\bB\bA^{-1}\right).$$
\end{proposition}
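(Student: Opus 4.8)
The plan is to identify the two quantities in the statement as the trace of the commutator and the trace of its inverse, and then to show that although neither is individually a polynomial in $\tau$, their two symmetric functions are. Set $C=[\bA,\bB]=\bA\bB\bA^{-1}\bB^{-1}$. Since $[\bA^{-1},\bB]=\bA^{-1}C^{-1}\bA$ is conjugate to $C^{-1}$, the two traces under consideration are exactly $\tr C$ and $\tr C^{-1}$, and these are the roots of
\[
X^2-sX+p=0,\qquad s=\tr C+\tr C^{-1},\quad p=\tr C\cdot\tr C^{-1}.
\]
It therefore suffices to prove that $s$ and $p$ are polynomials \emph{with integer coefficients} in the eight entries of $\tau$ and to take $S$ and $P$ to be those polynomials. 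This is the exact SL(3,$\C$)-analogue of the passage from the basic identity \eqref{traceSL2} to the commutator formula \eqref{tracecomSL2}.

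The computational engine is the Cayley--Hamilton theorem in dimension three. For $M\in$ SL(3,$\C$) it reads $M^3-(\tr M)M^2+(\tr M^{-1})M-I=0$, since $\det M=1$ and the second elementary symmetric function of the eigenvalues of $M$ equals $\tr M^{-1}$. Multiplying by $M^{-1}$ yields the fundamental identity
\[
M^{-1}=M^2-(\tr M)\,M+(\tr M^{-1})\,I,
\]
which plays here the role that $M+M^{-1}=(\tr M)I$ plays for SL(2,$\C$): it rewrites each inverse letter as a quadratic polynomial in the corresponding positive letter. Polarising the characteristic polynomial produces, in addition, a universal multilinear trace identity among any three $3\times3$ matrices, with which one reduces the trace of a positive word of length $\geq3$ to a $\Z$-combination of products of traces of shorter words. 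All of these identities have integer coefficients, so every reduction stays over $\Z$.

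First I would use the displayed inverse formula to eliminate $\bA^{-1}$ and $\bB^{-1}$ from both $\tr C$ and $\tr C^{-1}$, turning each into a combination, with coefficients in $\Z[\tr\bA,\tr\bA^{-1},\tr\bB,\tr\bB^{-1}]$, of traces of positive words in $\bA$ and $\bB$; then I would apply the polarised identity repeatedly to bring every positive word down to length at most two, at which point each surviving trace either lies in $\tau$ or collapses through an elementary relation such as $\tr\bA^2=(\tr\bA)^2-2\tr\bA^{-1}$. The hard part is that this reduction cannot succeed for $\tr C$ taken alone: an irreducible contribution survives, reflecting the fact that $\tr[\bA,\bB]$ is not a polynomial in $\tau$ but merely a root of a quadratic over $\Z[\tau]$. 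The whole content of the proposition is that this irreducible part is antisymmetric under $C\leftrightarrow C^{-1}$, so that it cancels in $s$ and enters $p$ only through $\Z[\tau]$. Conceptually this is forced by the contragredient automorphism $\theta(M)=(M^{-1})^{\mathsf T}$ of SL(3,$\C$): applying it to both generators sends the trace of every word to the trace of its inverse and merely permutes the eight entries of $\tau$ among themselves, which explains why $s$ and $p$, unlike $\tr C$ individually, descend to $\Z[\tau]$. I would pin down the cancellation either by the explicit trace bookkeeping carried out in Chapter~10 of \cite{Fo} following \cite{Wen}, or by checking the two identities on the Zariski-dense set of pairs with $\bA$ regular semisimple, where both sides become symmetric functions of eigenvalues, and extending by continuity.
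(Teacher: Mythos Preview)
Your proposal is correct and follows essentially the same approach as the paper: both identify the two roots as $\tr C$ and $\tr C^{-1}$ for $C=[\bA,\bB]$, invoke the Cayley--Hamilton identity in dimension three as the reduction engine, and defer the explicit bookkeeping to Chapter~10 of \cite{Fo} (after \cite{Wen}) rather than carrying it out in full. Your additional observation about the contragredient automorphism $\theta(M)=(M^{-1})^{\mathsf T}$ is a genuinely nice conceptual addition---it gives an a priori reason why $s$ and $p$ should be invariant under the swap $x_i\leftrightarrow x_{i+4}$, something the paper only remarks upon after the fact in the SU(2,1) specialisation---but it does not by itself prove that $s,p\in\Z[\tau]$, only that they are fixed by this involution; you rightly acknowledge that the actual cancellation still has to be verified by the explicit trace calculus, which is exactly where the paper leaves matters as well.
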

\noindent We will often refer in the sequel to \eqref{trace equation} as the \textit{trace equation for SL(3,$(\C)$}, 
or more simply as the \textit{trace equation}. The proof of Proposition \ref{S and P exist} can be done in a very
similar spirit as the derivation of \eqref{tracecomSL2}, only more involved. All the material necessary to do this can be found
in chapter 10 of \cite{Fo}, which actually follows \cite{Wen}. All computations are made explicit in \cite{Law,Parktraces,Wi3} 
The basic idea is to make a repeated use of the Cayley-Hamilton identity. The explicit expressions for the 
polynomials $S$ and $P$ are as follows (a slightly simpler and more symmetric expression for \eqref{prodroots} is derived 
in \cite{Parktraces} after a change of variables).
\begin{equation}S=x_1x_5+x_2x_6+x_3x_7+x_4x_8-x_1x_2x_7-x_5x_6x_3-x_5x_2x_8-x_1x_6x_4+x_1x_2x_5x_6-3\label{sumroots} \end{equation}
and
\begin{align}
P & =  
 x_5^{2}\,x_6\,x_1^{2}\,x_2 + x_5\,x_6^{2}\,x_1\,x_2^{2}
 + x_4\,x_5^{2}\,x_2^{2} + x_5^{2}\,x_6^{2}\,x_7+ x_6^{2}\,x_8\,x_1^{2}
     + x_1^{2}\,x_2^{2}\,x_3 \nonumber\\
 &  - x_4\,x_5\,x_6\,x_1^{2} - x_4\,x_6^{2}\,x_1\,x_2 - x_5^{2}\,x_6\,x_1\,x_3
      - x_5^{2}\,x_8\,x_1\,x_2 \nonumber\\
 &  - x_5\,x_6^{2}\,x_2\,x_3 -x_5\,x_6\,x_8\,x_2^{2}
      - x_5\,x_7\,x_1^{2}\,x_2- x_6\,x_7\,x_1\,x_2^{2}\nonumber\\
 &  - x_5^{3}\,x_6\,x_2- x_5\,x_6^{3}\,x_1-x_5\,x_1\,x_2^{3}
     -x_6\,x_1^{3}\,x_2\nonumber\\
 &  - x_4\,x_5\,x_6\,x_7\,x_2- x_4\,x_5\,x_1\,x_2\,x_3-x_5\,x_6\,x_7\,x_8\,x_1
     -x_6\,x_8\,x_1\,x_2\,x_3\nonumber\\
 & 
+ x_4^{2}\,x_6\,x_7 
+ x_4^{2}\,x_1\,x_3 
+ x_4\,x_5^{2}\,x_6 
+ x_4\,x_5\,x_3^{2} 
+ x_4\,x_6^{2}\,x_3 \nonumber\\
 & 
+ x_4\,x_7^{2}\,x_2
+ x_4\,x_7\,x_1^{2} 
+ x_4\,x_1\,x_2^{2} 
+ x_5^{2}\,x_7\,x_2
+ x_5^{2}\,x_8\,x_3\nonumber\\
 & 
+ x_5\,x_6^{2}\,x_8
+ x_5\,x_7\,x_8^{2}
+ x_5\,x_2^{2}\,x_3
+ x_6^{2}\,x_7\,x_1
+ x_6\,x_8\,x_3^{2} \nonumber\\
 & 
+ x_6\,x_1^{2}\,x_3
+ x_7^{2}\,x_8\,x_1 
+ x_7\,x_8\,x_2^{2} 
+ x_8^{2}\,x_2\,x_3 
+ x_8\,x_1^{2}\,x_2\nonumber\\
&
- 2\,x_4^{2}\,x_5\,x_2
- 2\,x_5\,x_6\,x_7^{2}
- 2\,x_6\,x_8^{2}\,x_1
- 2\,x_1\,x_2\,x_3^{2}\nonumber\\
&
+ x_4\,x_5\,x_8\,x_1
+ x_4\,x_6\,x_8\,x_2
+ x_4\,x_7\,x_8\,x_3
+ x_5\,x_6\,x_1\,x_2
+ x_5\,x_7\,x_1\,x_3
+ x_6\,x_7\,x_2\,x_3\nonumber\\
& 
+ x_1^{3}+ x_2^{3} + x_3^{3}+ x_4^{3}+ x_5^{3}+ x_6^{3}+ x_7^{3}+x_8^{3}\nonumber\\
& 
- 3\,x_4\,x_5\,x_7- 3\,x_4\,x_2\,x_3- 3\,x_6\,x_7\,x_8- 3\,x_8\,x_1\,x_3\nonumber\\
& 
+ 3\,x_4\,x_6\,x_1+ 3\,x_5\,x_6\,x_3 + 3\,x_5\,x_8\,x_2+ 3\,x_7\,x_1\,x_2\nonumber\\
 & - 6\,x_4\,x_8 - 6\,x_5\,x_1 - 6\,x_6\,x_2 
 - 6\,x_7\,x_3 + 9.\label{prodroots}
\end{align}
\subsection{Classification of irreducible pairs in SU(2,1).\label{sectionclasspairs}}
The following theorem is due to Lawton \cite{Law}, and it generalizes Theorem \ref{theoFV} to the case of
SL(3,$\C$). We denote by $\mathcal{R}_3$ the ring of invariants $\C\mbox{[SL(3,$\C$)$\times$SL(3,$\C$)]}^{\mbox{\tiny{SL(3,$\C$)}}}$.
\begin{theorem}\label{theo-FVLawton}
\begin{enumerate}
\item Any element of $\mathcal{R}_3$ is a polynomial in the
  traces of the nine words $\bA$, $\bB$, $\bA\bB$, $\bA^{-1}\bB$, their inverses and $[\bA,\bB]$.
This polynomial is unique up to the ideal generated by the left hand side of
  \eqref{trace equation}.
\item The map $\Psi_3$ defined on  SL(3,$\C$)$\times$SL(3,$\C$) by
\begin{equation}\label{mapsl3c}
(\bA,\bB) \longmapsto  \left(\tr \bA,\tr \bB,\tr \bA\bB,\tr
\bA^{-1}\bB,\tr \bA^{-1},\tr \bB^{-1},\tr \bB^{-1}\bA^{-1},\tr \bB\bA^{-1} \right)\nonumber
\end{equation}
is a branched double cover of $\C^8$.
\item  Two irreducible pairs $(\bA,\bB)$ and $(\bA',\bB')$ of elements of
  SL(3,$\C$) are conjugate if and only if $\Psi_3(\bA,\bB)=\Psi_3(\bA',\bB')$ and $\tr [\bA,\bB]=\tr [\bA',\bB']$. 
\end{enumerate}
\end{theorem}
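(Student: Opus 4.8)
The plan is to run the standard invariant-theoretic argument for tuples of matrices under simultaneous conjugation, specialised to $n=3$ and two generators, as in \cite{Law}. For part~(1) the backbone is the First Fundamental Theorem: $\mathcal{R}_3$ is generated by traces of words in $\bA,\bB$ and their inverses, and by Procesi's bound (see \cite{Probook}) words of length $\leq n^2=9$ suffice. The substantive work is to trim this large set down to the nine displayed traces. I would first record the two reduction tools coming from the Cayley--Hamilton identity for determinant-one $3\times 3$ matrices, namely $M^3=(\tr M)M^2-(\tr M^{-1})M+I$ together with its polarisations (the fundamental trace identities of \cite{Fo,Wen}). Substituting these repeatedly into traces of long words rewrites every invariant in terms of traces of short words, and a finite---if lengthy---bookkeeping shows that the subring generated by $\tr\bA,\tr\bB,\tr\bA\bB,\tr\bA^{-1}\bB$, their four inverse counterparts, and $\tr[\bA,\bB]$ already contains all of them.

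Next I would identify the relations. By Proposition~\ref{S and P exist}, the two commutator traces $\tr[\bA,\bB]$ and $\tr[\bA^{-1},\bB]$ are the roots of \eqref{trace equation}, whose coefficients $S,P$ are the explicit polynomials \eqref{sumroots}--\eqref{prodroots} in the eight coordinates of $\Psi_3$. Setting $t=\tr[\bA,\bB]$, this exhibits $\mathcal{R}_3$ as $\C[x_1,\dots,x_8][t]/(t^2-St+P)$, a free module of rank two over $\C[x_1,\dots,x_8]$ with basis $\{1,t\}$. Generation by the nine traces, and the statement that the defining relation is unique up to the principal ideal generated by the left-hand side of \eqref{trace equation}, both follow from this description, completing part~(1).

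For part~(2), the rank-two module structure makes $\mathcal{R}_3$ module-finite, hence integral, of degree two over $\C[x_1,\dots,x_8]$. Therefore $\Psi_3$ factors through the categorical quotient $\mathrm{Spec}\,\mathcal{R}_3$, and the induced morphism to $\C^8$ is finite of degree two: a branched double cover whose two sheets over a generic point are the two roots of \eqref{trace equation}, and whose branch locus is the discriminant hypersurface $\{S^2=4P\}$. Surjectivity onto all of $\C^8$ follows from finiteness---a finite morphism is closed, and it is dominant because $x_1,\dots,x_8$ are algebraically independent on $\mathcal{R}_3$---or, more concretely, by exhibiting for each target an explicit pair with the prescribed eight traces, as in \cite{Law}.

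Finally, part~(3) is orbit separation. I would invoke the standard fact that for a reductive group acting on an affine variety the invariants separate closed orbits, together with the observation that an irreducible pair---one generating a subgroup with no common invariant subspace---has a closed $\mathrm{SL}(3,\C)$-orbit. Hence two irreducible pairs are conjugate if and only if all generators of $\mathcal{R}_3$ agree on them, and by part~(1) these are precisely the eight coordinates of $\Psi_3$ together with $t=\tr[\bA,\bB]$, which is the stated criterion; the extra coordinate $\tr[\bA,\bB]$ is genuinely needed because $\Psi_3$ alone determines only the unordered pair of roots of \eqref{trace equation} and so cannot distinguish the two sheets over the branch locus. I expect the main obstacle to be the explicit reduction in part~(1): cutting the Procesi generating set down to exactly nine traces and proving that the relation ideal is principal, generated by the trace equation, is the computational heart of the argument.
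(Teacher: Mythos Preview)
The paper does not give its own proof of this theorem: it is simply stated and attributed to Lawton \cite{Law}, with pointers to \cite{Fo,Wen,Parktraces,Wi3} for the underlying trace calculus. Your outline is a correct sketch of the standard invariant-theoretic argument and is essentially what one finds in \cite{Law}: the First Fundamental Theorem plus Cayley--Hamilton reductions give generation, the presentation $\C[x_1,\dots,x_8][t]/(t^2-St+P)$ gives the relation ideal and the degree-two cover, and closed-orbit separation by invariants handles irreducible pairs. One small slip in wording: in part~(3) you say the extra trace is needed because $\Psi_3$ cannot distinguish the two sheets \emph{over} the branch locus; in fact the sheets coincide there, and the ninth coordinate is needed precisely \emph{away} from the branch locus, where the two roots of \eqref{trace equation} are distinct.
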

\noindent The relation $\overline{\bA}^TJ\bA=J$ defining SU(2,1) implies that any element $\bA\in$ SU(2,1) satisfies
\begin{equation}\tr \bA^{-1}=\overline{\tr \bA}\label{symSU21}\end{equation}
It is therefore possible to reduce the number of traces necessary to determine a pair $(\bA,\bB)$ up to conjugacy in SU(2,1).
Let  $\Psi_{2,1}$ be the mapping defined on SU(2,1)$\times$SU(2,1) by
\begin{eqnarray}\label{mappu21}
\Psi_{2,1}(\bA,\bB) = \left(\tr \bA,\tr \bB,\tr \bA\bB, \tr \bA^{-1}\bB,\tr [\bA,\bB]\right)
\end{eqnarray}
As a consequence of the first part of Theorem \ref{theo-FVLawton}, we see that for any word ${\tt w}$ in ${\tt a }$ and ${\tt b }$ there 
exists a polynomial $P_{\tt w}$ in the variables $z$ and $\overline{z}$ with $z\in\C^5$,  such that for any representation 
$\rho : F_2=\la{\tt a},{\tt b}\ra\longrightarrow $SU(2,1), 
$$\tr(\rho({\tt w}))=P_{\tt w}(\Psi_{2,1}(\rho({\tt a}),\rho({\tt b}))).$$
This polynomial is unique up to the relation given by the specialisation of the trace equation to SU(2,1). In the special case of 
triangle groups, Sandler \cite{Sa} and Prattousevitch \cite{Pra} have given explicit formulae allowing to compute traces of elements, 
that can be seen as a special case of the polynomials $P_{\tt w}$. In general though, no explicit or reccursive compuation of the 
polynomials $P_{\tt w}$ has been given to my knowledge. The map $\Psi_{2,1}$ classifies conjugacy classes of irreducible pairs in SU(2,1) :
\begin{proposition}\label{classSU21}
Two pairs irreducible pairs $(\bA,\bB)$ and $(\bA',\bB')$ of elements of
SU(2,1) are conjugate if and only if $\Psi_{2,1}(\bA,\bB)=\Psi_{2,1}(\bA',\bB')$.
\end{proposition}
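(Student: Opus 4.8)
The plan is to derive Proposition \ref{classSU21} from the general SL(3,$\C$) classification of Theorem \ref{theo-FVLawton} in two stages: first reduce the eight traces of $\Psi_3$, together with $\tr[\bA,\bB]$, to the five traces recorded by $\Psi_{2,1}$, using the symmetry \eqref{symSU21} special to SU(2,1); and then show that the conjugating element produced by Theorem \ref{theo-FVLawton}, a priori only an element of SL(3,$\C$), actually lies in SU(2,1).

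The "only if" direction is immediate, since traces are conjugation invariant. For the converse, suppose $\Psi_{2,1}(\bA,\bB)=\Psi_{2,1}(\bA',\bB')$. I claim this forces equality of the full eight-tuple $\Psi_3$ and of the commutator trace. Indeed, the first four coordinates of $\Psi_3$ are literally the first four coordinates of $\Psi_{2,1}$; cyclicity of the trace gives $\tr \bB\bA^{-1}=\tr \bA^{-1}\bB$, so that coordinate is controlled as well; and relation \eqref{symSU21} yields $\tr \bA^{-1}=\overline{\tr \bA}$, $\tr \bB^{-1}=\overline{\tr \bB}$ and $\tr \bB^{-1}\bA^{-1}=\tr(\bA\bB)^{-1}=\overline{\tr \bA\bB}$, expressing the three remaining coordinates of $\Psi_3$ in terms of data already present in $\Psi_{2,1}$. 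Since the fifth coordinate of $\Psi_{2,1}$ is exactly $\tr[\bA,\bB]$, the hypotheses of the third part of Theorem \ref{theo-FVLawton} are satisfied, and there exists $g\in$ SL(3,$\C$) with $g\bA g^{-1}=\bA'$ and $g\bB g^{-1}=\bB'$.

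It remains to show that $g$ already lies in SU(2,1); this is the only substantive point. Write $J$ for the matrix of the Hermitian form of signature $(2,1)$, so that $\overline{M}^TJM=J$ for $M\in\{\bA,\bB,\bA',\bB'\}$. Conjugating the identity $\overline{\bA'}^TJ\bA'=J$ by $g$ shows that $\bA$ preserves the form $J'=\overline{g}^TJg$, and likewise for $\bB$; note that $J'$ is again Hermitian and non-degenerate. Thus the pair $(\bA,\bB)$ preserves both $J$ and $J'$. At this stage I would invoke the standard consequence of irreducibility and Schur's lemma: an irreducible pair admits at most a one-dimensional real space of invariant Hermitian forms, so $J'=\mu J$ for some $\mu\in\R$. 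Taking determinants in $\overline{g}^TJg=\mu J$ gives $|\det g|^2=\mu^3$; since $\det g=1$ this forces $\mu^3=1$, hence $\mu=1$ because $\mu$ is real. Therefore $\overline{g}^TJg=J$ and $\det g=1$, that is $g\in$ SU(2,1), and the two pairs are conjugate in SU(2,1).

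The main obstacle is precisely this last step. The trace bookkeeping of the second paragraph is routine once \eqref{symSU21} and the cyclic invariance of the trace are in hand, and it is purely formal. The descent of the conjugacy from SL(3,$\C$) to SU(2,1), by contrast, is exactly where the irreducibility hypothesis is essential: a reducible pair can preserve a whole pencil of Hermitian forms, so the uniqueness of the invariant form up to a real multiple genuinely fails without it. Once uniqueness is established, pinning the scalar to $1$ via the determinant is the only remaining computation.
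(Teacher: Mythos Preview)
Your proof is correct and follows essentially the same two-step strategy as the paper: first reduce to SL(3,$\C$)-conjugacy via Theorem \ref{theo-FVLawton} and the trace symmetry \eqref{symSU21}, then argue that the conjugating element $g$ must preserve the Hermitian form. The only notable difference is in the second step: where the paper invokes Zariski density of the irreducible group $\langle\bA',\bB'\rangle$ in SU(2,1) to conclude that the transported form $\langle g^{-1}\cdot,g^{-1}\cdot\rangle$ is SU(2,1)-invariant, you instead use Schur's lemma directly on the irreducible representation to force $J^{-1}J'$ to be scalar. These are two packagings of the same idea; your version is arguably more self-contained, as it does not require the reader to know that irreducibility implies Zariski density in this setting. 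You are also more explicit than the paper in pinning down the scalar $\mu$ via the determinant, a step the paper leaves implicit.
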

\begin{proof}
 If $\Psi_{2,1}(\bA,\bB)=\Psi_{2,1}(\bA',\bB')$, then in view of \eqref{symSU21}  and Theorem \ref{theo-FVLawton}, there exists $g$ in SL(3,$\C$) such 
that $(\bA',\bB')=(g\bA g^{-1},g\bB g^{-1})$. The Hermitian form on $\C^3$ defined by $\la X,Y\ra_g=\la g^{-1} X,g^{-1} Y\ra$ is preserved by 
the group $\Gamma$ generated by $\bA'$ and $\bB'$. As $\Gamma$ is Zariski dense in SU(2,1), this implies that $\la\cdot,\cdot\ra_g$ is 
in fact SU(2,1)-invariant and thus $g$ belong to SU(2,1).
\end{proof}
\noindent We will also use the following map $\Phi_{2,1}$, which is the composition of $\Psi_{2,1}$ with the projection onto $\C^4$ 
given by the first four factors. It carries most of the information concerning traces.
\begin{eqnarray}\label{Phi21}
\Phi_{2,1} (\bA,\bB)= \left(\tr \bA,\tr \bB,\tr \bA\bB, \tr \bA^{-1}\bB\right)
\end{eqnarray}
Observe that the two polynomials $S$ and $P$ above are invariant under the change of variable
$(x_i)_{i=1}^{8}\longleftrightarrow(x_{i+4})_{i=1}^{8}$ (indices taken mod. 4). This is because
the two quantities $\tr[\bA,\bB]\tr[\bA^{-1},\bB^{-1}]$ and $\tr[\bA,\bB]+\tr[\bA^{-1},\bB^{-1}]$ are real, as can be 
checked using relation \eqref{symSU21}. Therefore the trace equation \eqref{trace equation} has a priori two real roots or two 
complex conjugates roots. For any $\bA\in$ SU(2,1), let us denote by $\bA^\tau$ the matrix $\overline{\bA^{-1}}=J\bA^TJ$, where $A^T$ is the 
transpose of $A$. The matrix $\bA^\tau$ belongs also to SU(2,1). It is a direct verification to see using \eqref{symSU21} that the 
pair $(\bA^\tau,\bB^\tau)$ satisfies
\begin{align}
& \tr \bA^\tau=\tr \bA,\, \tr \bB^\tau=\tr \bB,\, \tr \bA^\tau \bB^\tau=\tr \bA\bB,\, \nonumber\\
& \tr {(\bA^\tau)}^{-1}\bB^\tau=\tr \bA^{-1}\bB\mbox{ and } \tr [\bA^\tau,\bB^\tau]=\overline{\tr [\bA^\tau,\bB^\tau]}\label{tracestar}
\end{align}
This means that once the four traces of $\bA$, $\bB$, $\bA\bB$ and $\bA^{-1}\bB$ are fixed, the two possible values for the trace of 
$[\bA,\bB]$ are indeed represented by a pair of elements in SU(2,1) if and only if one of them is. As a consequence, the trace
equation \eqref{trace equation} has either one double real solution or two complex conjugate solutions.
This provides an explicit obstruction for a 4-tuple of complex numbers to be in the image of $\Phi_{2,1}$ : if 
\eqref{trace equation} has two distinct real solutions, they can not correspond to a pair of elements in SU(2,1). In other 
words, the polynomial $S^2-4P$ is negative on SU(2,1)$\times$SU(2,1). We will see in section \ref{section-loxoprod} that when 
$\bA$ and $\bB$ are loxodromic, this condition is in fact necessary and sufficient (see Theorem \ref{pairexists}). 
\subsection{When the trace equation has a real double root\label{subsection-double-root}}
In view of the previous section, it is natural to ask if one  characterise geometrically those pairs $(A,B)$ of elements of 
PU(2,1) such that $\tr [A,B]$ is real, that is the pairs of elements of PU(2,1) for which the trace equation has a double root? 
Note that even if the trace of an element in PU(2,1) is not well defined, the trace of a commutator is. Indeed, two lifts to SU(2,1) 
differ by multiplication  by a cube root of $1$, which is central in SU(2,1)  and thus does not affect the commutator. A sufficient 
condition for an element in SU(2,1) to have real trace is to have a real and positive eigenvalue associated to a fixed point in 
$\overline{\HdC}$. This follows from the fact that the spectrum of a matrix in SU(2,1) is stable under the transformation 
$z\longmapsto 1/\bar z$ (see chapter 6 of\cite{Go}). Even though the trace equation is only interesting for irreducible pairs, it is 
worth noting that pairs $(A,B)$ with a common fixed point in $\overline{\HdC}$ provide a first class of exemples where $\tr[A,B]$ is 
real. Indeed the common fixed point of $A$ and $B$ gives a fixed point of $[A,B]$ with eigenvalue equal to $1$. In \cite{PaW} the 
following result is proved. It provides a more interesting class of examples.
\begin{theorem}\label{theo-dec-PaW}
 Let $A$ and $B$ in PU(2,1) be two isometries with no common fixed point. The following assertions are equivalent.
\begin{enumerate}
 \item There exists three real symmetries $\sigma_i, i=1,2,3$ such that $A=\sigma_1\sigma_2$ and $B=\sigma_2\sigma_3$.
 \item The commutator $[A,B]$ has a fixed point $p$ in $\overline{\HdC}$ of which eigenvalue is real and positive 
(and thus it has real trace).
\end{enumerate}
\end{theorem}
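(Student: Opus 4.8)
The plan is to prove the two implications separately: $(1)\Rightarrow(2)$ by a direct computation with the three symmetries, and $(2)\Rightarrow(1)$ by reducing the decomposition to the existence of a single real symmetry reversing both $A$ and $B$. First I would prove $(1)\Rightarrow(2)$. Using $\sigma_i^2=\mathrm{id}$, the relations $A=\sigma_1\sigma_2$ and $B=\sigma_2\sigma_3$ give $A^{-1}=\sigma_2\sigma_1$ and $B^{-1}=\sigma_3\sigma_2$, and a short cancellation yields $[A,B]=ABA^{-1}B^{-1}=(\sigma_1\sigma_3\sigma_2)^2$. Set $\mu=\sigma_1\sigma_3\sigma_2$; as a product of three real symmetries it is an antiholomorphic isometry of $\HdC$, hence extends to a homeomorphism of the closed ball $\overline{\HdC}$, and by Brouwer's theorem it has a fixed point $p\in\overline{\HdC}$. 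Lifting $\mu$ to the antilinear map $v\mapsto M\bar v$ and $p$ to $\xi$, the equation $\mu(p)=p$ reads $M\bar\xi=c\xi$ for some $c\in\C^*$; conjugating gives $\bar M\xi=\bar c\,\bar\xi$, and since $[A,B]=\mu^2$ lifts to $M\bar M$ we obtain $M\bar M\,\xi=\bar c\,M\bar\xi=|c|^2\,\xi$. Thus $p$ is a fixed point of $[A,B]$ in $\overline{\HdC}$ with eigenvalue $|c|^2>0$; as the commutator has a canonical lift to SU(2,1), this eigenvalue is unambiguously real and positive, which is $(2)$.

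For the converse I would first reformulate $(1)$ algebraically. If $A=\sigma_1\sigma_2$ and $B=\sigma_2\sigma_3$, then $\sigma_1=A\sigma_2$, $\sigma_3=\sigma_2 B$, and conjugation by $\sigma_2$ gives $\sigma_2 A\sigma_2=A^{-1}$ and $\sigma_2 B\sigma_2=B^{-1}$. Conversely, if a real symmetry $\sigma_2$ satisfies these two reversal relations, then $A\sigma_2$ and $\sigma_2 B$ are antiholomorphic isometric involutions, hence real symmetries, and they recover the decomposition. Therefore $(1)$ holds if and only if there is a single real symmetry reversing both $A$ and $B$, and the whole problem is to produce such a $\sigma_2$ from the fixed point datum in $(2)$.

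To construct $\sigma_2$ I would use the fixed point $p$ and its positive eigenvalue to normalise $(A,B)$ in the Siegel model, via the conjugacy classification of Section \ref{section-conjug}, and then look for $\sigma_2$ among the antiholomorphic maps $v\mapsto S\bar v$. The two reversal relations become the linear equations $S\bar{\bA}=\bA^{-1}S$ and $S\bar{\bB}=\bB^{-1}S$ for the lift $S$, while the requirement that $\sigma_2$ be a genuine real symmetry of $\HdC$ — and not an antiholomorphic involution whose mirror lies in the exterior $\C P^2\setminus\overline{\HdC}$ — amounts to $S\bar S$ being a positive scalar and $S$ preserving the Hermitian form up to a positive factor. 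Each of $A$ and $B$ is separately reversed by a real symmetry (as one checks from the normal form), so these equations describe two nonempty families of mirrors, and the content of the theorem is that they share a member exactly when $[A,B]$ has a fixed point in $\overline{\HdC}$ with positive real eigenvalue.

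I expect the main obstacle to be this matching step: showing that the positive-eigenvalue hypothesis selects the sign for which a common solution $S$ exists and defines a real symmetry rather than an exterior involution. The natural way to control the sign is to run the computation of the first part backwards, with $\mu=AB\sigma_2$: there the eigenvalue of $[A,B]$ at $p$ appeared as $|c|^2$ with $p$ the fixed point of $\mu$, so the eigenvalue being positive and $p$ lying in $\overline{\HdC}$ should force the candidate $\mu$ to have its fixed point in $\overline{\HdC}$, which is equivalent to $\sigma_2=(AB)^{-1}\mu$ preserving $\HdC$. Making this precise requires a case analysis according to the isometry type of $[A,B]$ and to whether $p$ lies on $\partial\HdC$ or inside $\HdC$, and that is where the explicit computation has to be carried out.
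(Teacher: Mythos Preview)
Your direction $(1)\Rightarrow(2)$ is fine and in fact sharper than what the paper sketches. The paper only notes that a lift of $[A,B]$ has the shape $M\overline{M}$ and hence real trace (Remark \ref{RdecCdec}(2)); your observation that the antiholomorphic square root $\mu=\sigma_1\sigma_3\sigma_2$ must fix a point of the closed ball, and that the eigenvalue of $\mu^2$ there is $|c|^2>0$, gives directly the stronger statement about a fixed point in $\overline{\HdC}$ with positive eigenvalue.

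Your reformulation of $(1)$ as the existence of a single real symmetry $\sigma_2$ with $\sigma_2 A\sigma_2=A^{-1}$ and $\sigma_2 B\sigma_2=B^{-1}$ is also correct and is indeed the right target. The gap is in how you propose to produce $\sigma_2$. Normalising $(A,B)$ and solving the antilinear equations $S\bar{\bA}=\bA^{-1}S$, $S\bar{\bB}=\bB^{-1}S$ by a case analysis on the isometry type of $[A,B]$ is not a concrete plan: you have no a priori control on the conjugacy classes of $A$ and $B$, so there is no single normal form to work in, and your ``run the computation backwards with $\mu=AB\sigma_2$'' is circular since $\sigma_2$ is exactly what you are trying to build.

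The paper avoids this by extracting from the fixed point $p$ of $[A,B]$ a canonical piece of projective data. One forms the $4$--cycle
\[
p=p_1\ \overset{B^{-1}}{\longrightarrow}\ p_2\ \overset{A^{-1}}{\longrightarrow}\ p_3\ \overset{B}{\longrightarrow}\ p_4\ \overset{A}{\longrightarrow}\ p_1,
\]
and proves, by an eigenvalue--cross-ratio identity in the spirit of Lemma \ref{lem-cross-ratio-eigenvalues}, that $\lambda\cdot\X(p_2,p_4,p_1,p_3)$ is always real and positive. Thus the positivity hypothesis on $\lambda$ forces $\X(p_2,p_4,p_1,p_3)>0$. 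The second ingredient is a characterisation of this positivity: $\X(p_2,p_4,p_1,p_3)\in\R_{>0}$ holds if and only if there exists a real symmetry $\sigma$ with $\sigma(p_1)=p_3$ and $\sigma(p_2)=p_4$. That $\sigma$ is your $\sigma_2$: from the defining relations of the $4$--cycle one checks that $\sigma A\sigma$ and $A^{-1}$ agree on enough points to coincide, and similarly for $B$. This is the missing idea in your argument; once you have it, no case analysis on isometry types is needed.
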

Pairs satisfying the first property in Theorem \ref{theo-dec-PaW} are called $\R$-decomposable.
The main ingredients in \cite{PaW} are the following.
\begin{enumerate}
 \item Considering the four points given by the cycle associated to the fixed point $p$ of $[A,B]$ as follows
\begin{equation}\label{4-cycle}
p=p_1\overset{B^{-1}}\longrightarrow p_2\overset{A^{-1}}\longrightarrow p_3\overset{B}\longrightarrow p_4\overset{A}\longrightarrow p_1,
\end{equation}
one proves that $\lambda\cdot\X(p_2,p_4,p_1,p_3)$ is always positive, where $\lambda$ is the eigenvalue of $[A,B]$ associated to $p$. 
This is done by connecting cross ratio and eigenvalues by a relation in the spirit of Lemma \ref{lem-cross-ratio-eigenvalues}. In 
particular if $\lambda$ is positive, so is $\X$.
 \item A 4-tuple with real positive cross ratio has specific symmetries. More precisely $\X(p_2,p_4,p_1,p_3)$ 
is real and positive if and only if there exists a real symmetry $\sigma$ such that $\sigma(p_1)=p_3$ and $\sigma(p_2)=p_4$. 
A special case of this fact is mentioned in chapter 7 of \cite{Go}.
\end{enumerate}
The following fact follows also from \cite{PaW}.
\begin{proposition}
 If $[A,B]$ has a fixed point $p$ in $\overline{\HdC}$ with an associated real negative eigenvalue, then $p$ is on the boundary and
 the pair $(A,B)$  preserves a complex line.
\end{proposition}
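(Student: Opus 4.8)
The plan is to adapt the two-step strategy of \cite{PaW} recalled above (the one behind Theorem \ref{theo-dec-PaW}) to the case of a real \emph{negative} eigenvalue. Write $[A,B]=ABA^{-1}B^{-1}$, let $\lambda<0$ be the eigenvalue with $[A,B]\bp=\lambda\bp$, and form the cycle \eqref{4-cycle} of the fixed point $p=p_1$, namely $p_2=B^{-1}p_1$, $p_3=A^{-1}p_2$, $p_4=Bp_3$, with $Ap_4=p_1$. Choosing lifts together with scalars $\mu_1,\dots,\mu_4$ so that $B^{-1}\bp_1=\mu_1\bp_2$, $A^{-1}\bp_2=\mu_2\bp_3$, $B\bp_3=\mu_3\bp_4$ and $A\bp_4=\mu_4\bp_1$, one gets $\lambda=\mu_1\mu_2\mu_3\mu_4$. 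I will assume the four points pairwise distinct, the degenerate cases being disposed of separately (they force a common fixed point or fall under the reducible case).

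First I would reprove, exactly in the manner of Lemma \ref{lem-cross-ratio-eigenvalues}, the identity
\[
\lambda\cdot\X(p_2,p_4,p_1,p_3)=|\mu_2\mu_3|^2\,\dfrac{|\la\bp_3,\bp_4\ra|^2}{|\la\bp_3,\bp_2\ra|^2},
\]
whose right-hand side is a positive real number. It is obtained by eliminating $\la\bp_1,\bp_2\ra$ and $\la\bp_1,\bp_4\ra$ from \eqref{crossratio} using the four relations above and the $A,B$-invariance of the Hermitian form, which give $\la\bp_1,\bp_2\ra=(\bar\mu_2/\mu_4)\overline{\la\bp_3,\bp_4\ra}$ and $\la\bp_1,\bp_4\ra=(\mu_1/\bar\mu_3)\overline{\la\bp_3,\bp_2\ra}$. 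This is the negative-case counterpart of the first ingredient of \cite{PaW}; since $\lambda<0$, it forces $\X(p_2,p_4,p_1,p_3)$ to be real and \emph{negative}.

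The heart of the argument is then to show that a quadruple of pairwise distinct points with $\X(p_2,p_4,p_1,p_3)\in\R_{<0}$ must lie on $\partial\HdC$ and on a common complex line. The four points are images of $p_1$ under isometries, so they are all interior or all in $\partial\HdC$. To exclude the interior case I would normalise $p_1$ to the centre of the ball model, so that $\bp_1=[0,0,1]^T$ and $\la\bp_1,\bp_i\ra=-1$ for $i=2,4$; then \eqref{crossratio} collapses to $\X(p_2,p_4,p_1,p_3)=\la\bp_3,\bp_4\ra/\la\bp_3,\bp_2\ra$. For interior points the Cauchy--Schwarz inequality gives $\Re\la\bp_3,\bp_4\ra<0$ and $\Re\la\bp_3,\bp_2\ra<0$, and a real quotient of two numbers with negative real part is positive, contradicting $\X<0$; hence $p\in\partial\HdC$. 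For boundary points I would instead normalise $p_1=q_\infty$ in the Siegel model and take standard lifts $\bp_i=[-|z_i|^2+it_i,\sqrt2\,z_i,1]^T$ for $i=2,3,4$; a direct computation gives $\la\bp_1,\bp_i\ra=1$, so again $\X(p_2,p_4,p_1,p_3)=\la\bp_3,\bp_4\ra/\la\bp_3,\bp_2\ra$, now with $\Re\la\bp_3,\bp_4\ra=-|z_3-z_4|^2$ and $\Re\la\bp_3,\bp_2\ra=-|z_3-z_2|^2$. The condition that this quotient be real and negative then forces both real parts to vanish, i.e. $z_2=z_3=z_4$, so that $p_2,p_3,p_4$ lie on the vertical chain $\{z=z_2\}$ through $q_\infty$; thus all four points lie on the boundary of a single complex line $L$.

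It remains to propagate the complex line to the group. Since $A$ sends $p_3\mapsto p_2$ and $p_4\mapsto p_1$, the image $A(L)$ is a complex line through $p_1$ and $p_2$; as two distinct points determine a unique complex line and $p_1,p_2\in L$, we get $A(L)=L$, and likewise $B(L)=L$ from $Bp_2=p_1$, $Bp_3=p_4$, so $(A,B)$ preserves $L$. The step I expect to require the most care is the sign analysis of the normalised cross ratio in the boundary case: one must check that $\la\bp_3,\bp_4\ra/\la\bp_3,\bp_2\ra$ can be real and negative \emph{only} through the coincidence $z_2=z_3=z_4$ (the mixed cases, where exactly one of the two Hermitian products is purely imaginary, force the quotient to be non-real or zero), and one must separately treat the degenerate configurations set aside at the outset.
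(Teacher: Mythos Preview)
Your argument is correct and follows exactly the two--step strategy of \cite{PaW} that the paper sketches: the identity $\lambda\cdot\X(p_2,p_4,p_1,p_3)>0$ turns a negative eigenvalue into a negative real cross-ratio, and your sign analysis of the normalised Hermitian products (both $\Re\la\bp_3,\bp_i\ra\le 0$, with the interior case strictly negative) is the right way to force first $p\in\partial\HdC$ and then the vertical--chain configuration. One small caution: your parenthetical claim that the degenerate coincidences among $p_1,\dots,p_4$ ``force a common fixed point or fall under the reducible case'' is not quite automatic---for instance $p_1=p_2$ only says $B$ fixes $p$, and one still has to extract the invariant complex line from the eigenvalue constraint---so that case analysis does need to be written out, as you acknowledge at the end.
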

Note that there are elements of SU(2,1) with a negative eigenvalue of negative type and non real trace : consider for instance an 
elliptic element with spectrum $\{e^{i\theta},-e^{-i\theta},-1\}$.
\begin{remark}\label{RdecCdec}
\begin{enumerate}
\item  The question of determining when a pair $(A,B)$ is $\R$-decomposable had been adressed in \cite{Wi2} under the 
assumption that $A$ and $B$ are loxodromic. The treatment of the question there was less natural 
than in \cite{PaW}. The result was obtained as a byproduct of the classification 
of pairs by traces given in section \ref{sectionclasspairs}. 
\item It is easy to see that $\tr [A,B]\in\R$ is a necessary condition for the pair $(A,B)$ to be $\R$ decomposable  
using lifts of real reflections. A lift of an antiholomorphic isometry $A$ is any matrix $M\in$U(2,1) such that for any $m\in\HdC$, 
$A(m)=P(M\overline{\bm})$, where $P$ denotes projectivisation, and $\bm$ is any lift of $m$ to 
$\C^3$. When $\sigma$ is a real reflection, any lift of $\sigma$ must satisfy 
$M\overline{M}=Id$ because $\sigma$ has order two. Now, if the first condition of theorem \ref{theo-dec-PaW} is satisfied, a 
direct computation shows that a lift of $[A,B]$ to SU(2,1) is given by $(M_1\overline{M_2}M_3\overline{M_1}M_2\overline{M_3})$, 
where $M_i$ is a lift of $\sigma_i$. The latter matrix is of the form $M\overline{M}$ and has therefore real trace.
\item Knowing that a pair $(A,B)$ is $\R$-decomposable can be very useful, as it shows that the group 
$\la A,B\ra$ has index two in a group generated by three real reflections. In particular, it provides an additional geometric data, 
given by the mirrors of the real reflections. As an example, in \cite{DFP} Deraux, Falbel and Paupert noticed that Mostow's Lattices
were generated by real reflections in this way, and used this remark to produce new fundamental domains for these groups.
\item There is a similar notion of $\C$-decomposablity : a pair $(A,B)$ is $\C$-decomposable whenever it can be written as in Theorem 
\ref{theo-dec-PaW}, but using complex symmetries instead of real ones. In this situation, we have $A=I_1I_2$,
$B=I_2I_3$, $AB=I_1I_3$ and $A^{-1}B=I_2I_1I_2I_3$. In particular, these four isometries are products of two complex 
reflections of order two (for $A^{-1}B$, note that $I_2I_1I_2$ is conjugate to $I_1$ and is thus a complex reflection). It is 
a simple exercise to prove that the product of two such complex reflections always have real trace, and therefore when $(A,B)$ is 
$\C$-decomposable $A$, $B$, $AB$ and $A^{-1}B$ are all real. In particular, $\C$-decomposable pairs provide fixed points for the 
involution on the trace variety given by $(z_1,z_2,z_3,z_4,z_5)\longmapsto (\overline{z_1},\overline{z_2},\overline{z_3},\overline{z_4},z_5)$ 
which is induced by $(A,B)\longmapsto(A^{-1},B^{-1})$.
\end{enumerate}
\end{remark}
\subsection{An example.}
We consider now the example of pairs of unipotents having uniporent product.
\begin{proposition}\label{uniplox}
 Let $A$ and $B$ in PU(2,1) be two unipotent parabolic elements with different fixed point and such that $AB$ 
is also unipotent. Then $A^{-1}B$ is loxodromic.
\end{proposition}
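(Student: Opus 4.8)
The plan is to reduce the statement to a single trace computation and then apply the trace criterion for isometry type (Proposition \ref{deltofunc}). First I would normalise using the Siegel model. Since $A$ and $B$ are parabolic with distinct fixed points, I place the fixed point of $A$ at $q_\infty$ and that of $B$ at the origin $o=\begin{bmatrix}0&0&1\end{bmatrix}^T$, both boundary points. Taking the unipotent lifts $\bA,\bB\in$ SU(2,1), the isometry $A$ becomes a Heisenberg translation, so by \eqref{liftHeistrans}, and writing $\bB$ as the conjugate of a Heisenberg translation by the involution $R$ exchanging $q_\infty$ and $o$,
$$\bA=\begin{bmatrix}1 & -\bar a\sqrt2 & -|a|^2+is\\ 0 & 1 & a\sqrt2\\ 0 & 0 & 1\end{bmatrix},\qquad
\bB=\begin{bmatrix}1 & 0 & 0\\ b\sqrt2 & 1 & 0\\ -|b|^2+ir & -\bar b\sqrt2 & 1\end{bmatrix}.$$
Both are genuinely unipotent, so $\tr\bA=\tr\bB=3$. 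Since the type of $A^{-1}B$ is read off from the sign of $f(\tr\bA^{-1}\bB)$, and $f$ is invariant under multiplication by a cube root of unity, the conclusion will not depend on the chosen lift.

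The key step is the short computation of the diagonal entries of $\bA\bB$ and $\bA^{-1}\bB$. Writing $c=\Re(a\bar b)$, one finds
\begin{align*}
\tr\bA\bB &= 3-4c+|a|^2|b|^2-sr-i\,(r|a|^2+s|b|^2),\\
\tr\bA^{-1}\bB &= 3+4c+|a|^2|b|^2+sr+i\,(s|b|^2-r|a|^2).
\end{align*}
Adding the real parts yields the crucial identity $\Re(\tr\bA\bB)+\Re(\tr\bA^{-1}\bB)=6+2|a|^2|b|^2$, valid with no hypothesis on the pair. Now I use that $AB$ is unipotent: the matrix $\bA\bB\in$ SU(2,1) is one of the three lifts of a unipotent element, so $\tr\bA\bB=3\zeta$ with $\zeta^3=1$, whence $\Re(\tr\bA\bB)=3\Re\zeta\leq 3$. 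Therefore, setting $z=\tr\bA^{-1}\bB$,
$$x:=\Re(z)=6+2|a|^2|b|^2-3\Re\zeta\geq 3.$$

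To conclude I would feed $x\geq 3$ into the criterion. With $z=x+iy$ and $w=y^2$, a direct expansion together with the factorisation $x^4-8x^3+18x^2-27=(x-3)^3(x+1)$ gives
$$f(z)=w^2+(2x^2+24x+18)\,w+(x-3)^3(x+1).$$
For $x\geq 3$ every term is non-negative, so $f(z)\geq 0$, with equality only if $x=3$ and $y=0$. By Proposition \ref{deltofunc} it then remains to exclude this boundary case. Equality forces $\zeta=1$ and $|a|^2|b|^2=0$; but if, say, $a=0$, then $\tr\bA\bB=3$ forces $s|b|^2=0$ and $sr=0$ through its imaginary and real parts, which makes $\bA$ or $\bB$ trivial, contradicting that $A$ and $B$ are nontrivial parabolics. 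Hence $x>3$, so $f(\tr\bA^{-1}\bB)>0$ and $A^{-1}B$ is loxodromic.

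The step I expect to be the heart of the matter, and which makes the whole argument collapse, is the cancellation producing $\Re(\tr\bA\bB)+\Re(\tr\bA^{-1}\bB)=6+2|a|^2|b|^2$: without it the real part of $\tr\bA^{-1}\bB$ would depend on all four parameters $a,b,s,r$, and the three cases $\zeta\in\{1,\omega,\omega^2\}$ would each have to be analysed separately. The only remaining care is in the degenerate subcase of vertical Heisenberg translations ($a=0$ or $b=0$), where one checks directly that a unipotent product forces one of the generators to be trivial.
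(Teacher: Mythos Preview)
Your proof is correct and complete: the trace computations, the key identity $\Re(\tr\bA\bB)+\Re(\tr\bA^{-1}\bB)=6+2|a|^2|b|^2$, the factorisation $f(x+iy)=y^4+(2x^2+24x+18)y^2+(x-3)^3(x+1)$, and the exclusion of the boundary case $z=3$ all check out.

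Your route is genuinely different from the paper's. The paper does not touch explicit matrices; instead it feeds the data $\tr\bA=\tr\bB=3$, $\tr\bA\bB=3\omega$, $\tr\bA^{-1}\bB=z$ into the trace equation \eqref{trace equation} for $\tr[\bA,\bB]$ and observes that its discriminant is exactly $f(z-9)$. Since this discriminant must be non-positive for a pair in SU(2,1) (Section~\ref{sectionclasspairs}), the point $z-9$ lies in the closed deltoid, i.e.\ $z$ lies in the deltoid translated by $9$. That translated region is contained in $\{|z-9|\le 3\}$, hence disjoint from the original deltoid, so $z$ is a loxodromic trace. What the paper's argument buys is an illustration of the general machinery of the trace equation and the obstruction $S^2-4P\le 0$; what your argument buys is an elementary, self-contained proof that does not require the polynomials $S$ and $P$ and that isolates a clean geometric reason (the half-plane $\Re z\ge 3$) for loxodromicity. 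The paper in fact remarks, just after its proof, that the explicit-matrix approach you took is also available. One small bonus of the paper's method: it localises $z$ to a compact region (the translated deltoid) rather than just a half-plane, though this extra precision is not used.
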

Note that pairs $(A,B)$ with different fixed points and such that $A$, $B$ and $AB$ all are unipotent exist. A simple example 
can be obtained by embedding a Fuchsian groups uniformising a 3-punctured sphere into the stabilizer of a real plane. 
They are described and classified in \cite{ParkWi}, and will be the object of the article to come \cite{ParkWi2}.
\begin{proof}
 As $A$ and $B$ are both unipotent parabolic, we can find lifts ${\bf A}$ and ${\bf B}$ to SU(2,1) with trace 3. The 
condition that $AB$ is unipotent gives $\tr {\bf AB}=3\omega$, where $\omega$ is a cube root of unity. Let us denote by $z$ the 
trace of ${\bf A}^{-1}{\bf B}$. Plugging $x_1=x_2=x_4=x_5=3$, $x_3=\overline{x_5}=3\omega$ and $x_4=\overline{x_8}=z$ in 
\eqref{sumroots} and \eqref{prodroots}, we see that the trace of $[\bf A,\bf B]$ is a solution of 
the quadratic
\begin{equation}\label{quadexample}T^2-(51+|w|^2)T+657+2\Re(w^3)+21|w|^2=0,\mbox{ where }w=z-9.\end{equation}
The discriminant of this equation is equal to $|w|^4-8\Re(w^3)+18|w|^2-27$. This function of $w$ is 
negative inside the deltoid curve described in proposition \ref{deltofunc} and positive outside. This means that \eqref{quadexample} 
has two complex conjugate roots or one real double root exactly when $z$ belongs to the translated by 9 of the closure of the 
interior of the deltoid. In particular, $z$ is a loxodromic trace.
\end{proof}
Because unipotent maps are quite easy to write in the Siegel model, Proposition \ref{uniplox} can  also be obtained using explicit 
matrices.

\section{Constraints on conjugacy classes\label{section-constraints}} 
In this section we expose certain obtructions for conjugacy classes of elements in 2 generator subgroups of PU(2,1) or SU(2,1). 
The main question we adress is the following. Let $\mathcal{C}_1$ and $\mathcal{C}_2$ be two conjugacy classes in PU(2,1). 
Describe the image of the map
\begin{align}\label{mu-prod}
 \pi : \mathcal{C}_1\times \mathcal{C}_2 & \longrightarrow [{\rm PU(2,1)}]\nonumber\\
(A,B) &\longmapsto [AB],
\end{align}
where $[{\rm PU(2,1)}]$ denotes the set of conjugacy classes in PU(2,1), and $[g]$ the conjugacy class of an element. If a 
conjugacy class $\mathcal{C}$ belongs to the image of $\pi$, this means that there exists a representation $\rho$ of the fundamental 
group of the 3-punctured sphere in $G$, where the peripheral loops are mapped to elements in the corresponding conjugacy classes. 
This problem has a long history, it is a very special case of the Deligne-Simpson problem (see \cite{Kost,Simp}). 

Similarly, knowing the image of the map $\Phi_{2,1}$ defined in \eqref{Phi21} would  provide even more precise such obstructions, 
but a complete description is not known. We will provide below an example proving that $\Phi_{2,1}$ is not onto. This is of course not 
at all surprising.

However, even when one knows for some reason that a certain pair $(A,B)$ with certain  given conjugacy classes should exist, 
finding an explicit expression of it is often not at all trivial. In particular, if one knows that a conjugacy class 
$\mathcal{C}$ is in the image of $\pi$, parametrising the fiber of $\pi$ above $\mathcal{C}$, or sometimes only finding a 
preimage of $\mathcal{C}$ by $\pi$ can be a non-trivial task. The same remark can be done concerning the fibers of the map 
$\Phi_{2,1}$.

\subsection{Pairs of loxodromics\label{section-loxoprod}}
\subsubsection{The product map in PU(2,1).}
We are now going to consider the map $\pi$ when $\mathcal{C}_1$ and $\mathcal{C}_2$ are loxodromic conjugacy classes. A loxodromic 
conjugacy class in PU(2,1) is determined by a complex number of modulus greater than 1, which is defined up to an order three 
rotation around the origin, corresponding to the three possible lifts to SU(2,1). In turn, the set of loxodromic conjugacy classes in PU(2,1) 
is identified to the cylinder $\{z\in\C, |z|>1\}/\Z_3$. Denoting this cylinder by $\mathcal{C}_{{\rm lox}}$, we see that the 
half-lines with fixed argument in $\C$ correspond to the vertical lines of $\mathcal{C}_{{\rm lox}}$. All eigenvalues we will 
consider in this section are consider up to this order three rotation.
\begin{theorem}\label{theo-FW}
When $\mathcal{C}_1$ and $\mathcal{C}_2$ are loxodromic conjugacy classes, the image of the map $\pi$ contains all 
loxodromic conjugacy classes. Morever, the fibre of $\pi$ above a loxodromic conjugacy class is compact modulo the diagonal action of PU(2,1) by conjugation on 
$\mathcal{C}_1\times\mathcal{C}_2$.
\end{theorem}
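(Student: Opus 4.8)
The plan is to reduce the statement to an elementary degree estimate on the trace equation and then to run a properness/connectedness argument. First I would invoke the dictionary between loxodromic classes and traces from the discussion following Proposition \ref{deltofunc}: fixing $\mathcal C_1$ and $\mathcal C_2$ amounts to fixing $a=\tr\bA$ and $b=\tr\bB$ (up to a cube root of $1$), and each loxodromic class is the class of a \emph{unique} trace lying outside the deltoid. Writing $\mathcal U$ for the (connected) exterior of the deltoid, the first assertion becomes: for every $c\in\mathcal U$ there is an irreducible loxodromic pair with $\tr\bA=a$, $\tr\bB=b$, $\tr\bA\bB=c$. By Proposition \ref{classSU21} such a pair is determined up to conjugacy by $d=\tr\bA^{-1}\bB$ and $\tr[\bA,\bB]$, the latter being a root of \eqref{trace equation}; invoking the companion Theorem \ref{pairexists}, realisability of $(a,b,c,d)$ by a loxodromic pair is equivalent to the strict inequality $g_c(d):=(S^2-4P)(a,b,c,d)<0$. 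Everything then reduces to controlling the sign of $g_c$ as a function of $d$.

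The key computation is a degree count. Substituting the \textsc{su}(2,1) relations $x_{i+4}=\overline{x_i}$ from \eqref{symSU21} into \eqref{sumroots}, one finds $S=|d|^2-a\overline b\,d-\overline a b\,\overline d+(\text{terms independent of }d)$, so $S$ is real and its part of highest degree in $(d,\overline d)$ is $|d|^2$. A term-by-term inspection of \eqref{prodroots} shows that $P$ has total degree at most $3$ in $(d,\overline d)$, the only cubic contributions being $d^3$ and $\overline{d}^3$. Consequently
\begin{equation*}
 g_c(d)=|d|^4+O\!\left(|d|^3\right),
\end{equation*}
with leading coefficient $1$ \emph{independent of} $a,b,c$. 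Hence $g_c$ is coercive, and the sublevel set $\{d:g_c(d)\le0\}$ is contained in a ball whose radius depends continuously on $(a,b,c)$. This single estimate drives both halves of the theorem.

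For surjectivity I would argue by a clopen argument on $\mathcal U$. Let $\Omega=\{(c,d):c\in\mathcal U,\ g_c(d)<0\}$, which by Theorem \ref{pairexists} is exactly the image of $\Phi_{2,1}$ on (class $a$)$\times$(class $b$) over $\{c\in\mathcal U\}$. Since $\Omega$ is open (a strict inequality in a continuous function), its projection $p$ to the $c$-plane has open image. The degree estimate makes $p$ proper over $\mathcal U$: above a compact $K\subset\mathcal U$ the fibres are confined to a fixed ball in $d$, so $p^{-1}(K)$ is compact and the image of $p$ is closed in $\mathcal U$. Finally the image is nonempty, as one sees by exhibiting a single pair of the prescribed classes with loxodromic product (two loxodromics with sufficiently separated fixed points are in ping-pong position, or one simply writes down one explicit pair and checks $\tr\bA\bB\in\mathcal U$). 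As $\mathcal U$ is connected, the image is all of $\mathcal U$.

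For the second assertion, fix $\mathcal C_3$ with trace $c\in\mathcal U$. On the irreducible locus the fibre $\pi^{-1}([\mathcal C_3])$, taken modulo conjugation, is recorded by $d=\tr\bA^{-1}\bB$ lying in the compact set $\{d:g_c(d)\le0\}$, together with the finite choice of $\tr[\bA,\bB]$ among the two $\tau$-conjugate roots of \eqref{trace equation} (compare \eqref{tracestar}); normalising $\bA$ to the standard form \eqref{class-conj-loxo}, boundedness of $d$ and continuity of the inverse of $\Phi_{2,1}$ on the irreducible locus extract a convergent subsequence from any sequence in the fibre. I expect the \textbf{main obstacle} to be precisely the passage to the quotient at non-generic pairs: Proposition \ref{classSU21} only governs irreducible pairs, so one must separately check that reducible pairs in the fibre — which, having a common fixed point, give a commutator with eigenvalue $1$ and hence lie on the boundary $g_c=0$ — cannot let the configuration escape to infinity, and that the resulting fibre is genuinely compact in the conjugation quotient. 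Making \emph{compact modulo conjugation} precise at these points, rather than the degree estimate (which is routine once set up), is where the real care is needed.
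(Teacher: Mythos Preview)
Your route is genuinely different from the paper's. The paper argues geometrically: it identifies the image of the reducible locus as a single vertical line $L$ in the cylinder $\mathcal C_{\rm lox}$, invokes the Lie-theoretic fact that $\pi$ has maximal rank at irreducible pairs (so the image of irreducibles is open), and obtains properness from the Bestvina--Paulin compactness theorem; a clopen argument on the connected set $\mathcal C_{\rm lox}\setminus L$ then finishes. You instead reduce everything to Theorem~\ref{pairexists} and a degree count on $Q=S^2-4P$ in the variable $d=\tr\bA^{-1}\bB$. Your degree estimate is correct: after the substitution $x_{i+4}=\overline{x_i}$ one has $S=|d|^2+O(|d|)$ and the only cubic terms of $P$ in $(d,\bar d)$ are $d^3+\bar d^3$, so $Q=|d|^4+O(|d|^3)$ with leading coefficient independent of $(a,b,c)$. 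This does give compactness of $\{d:g_c(d)\le0\}$ and hence of the fibre in $d$, so your treatment of the second assertion is essentially sound (and you correctly flag the reducible boundary as the delicate point). Note however that you are invoking Theorem~\ref{pairexists}, which in the paper comes \emph{after} Theorem~\ref{theo-FW} and rests on the computer-verified Lemma~\ref{Lemma-strike}; the dependence is logically harmless but reverses the paper's order.

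The surjectivity argument has a genuine gap. You prove openness for $p(\Omega)$ with $\Omega=\{g_c(d)<0\}$ (coordinate projections are open), and then claim properness of $p$ to get closedness. But $p|_\Omega$ is not proper: $p^{-1}(K)\cap\Omega$ is bounded yet open, hence not compact. What coercivity actually yields is properness of $p$ on $\overline\Omega=\{g_c\le0\}$, so $p(\overline\Omega)$ is closed; and by Theorem~\ref{pairexists} (which says $\le0$, not $<0$) it is $p(\overline\Omega)$, not $p(\Omega)$, that equals the image of $\pi$. You have thus shown the image is closed but not that it is open: nothing in your argument rules out a point $c_0\in\mathcal U$ with $m(c_0):=\min_d g_{c_0}(d)=0$ and $m(c)>0$ for all nearby $c$. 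Closing this gap seems to require exactly the ingredient the paper supplies---openness of $\pi$ at irreducible pairs---or an additional algebraic fact about the structure of the minimum of $g_c$ that you have not established.
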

This fact has been proved in the frame of real, complex and quaternionic geometry in \cite{FW}. We now sum up the argument.

Note first the map analogous to $\pi$ but for hyperbolic conjugacy classes in PU(1,1) also contains all hyperbolic conjugacy classes 
in its image. It is a simple exercise in classical Poincar\'e disc geometry to prove this, for instance 
by decomposing hyperbolic maps into products of involutions. In the case of PU(2,1), denote by 
$\mu$ and $\nu$ the attractive eigenvalues any lifts $\bA$ and $\bB$ to SU(2,1).  When the pair $(A,B)$ is reducible, that is if $A$ 
and $B$ have a common fixed point in $\C P^2$, then it is a simple exercise to verify that $\lambda_{\bA\bB}$, the attractive 
eigenvalue  of the product  $\bA\bB$  has argument equal to $\arg(\mu\nu)$. Morever, $\lambda_{\bA_B}$ can take arbitrary modulus. 
Indeed if $A$ and $B$ preserve a common complex line, then the translation length of the product, $\ell_{AB}$, can take any 
real positive value (this follows from the remark about the PU(1,1) case above). These two quantities are related by 
$e^{\frac{\ell_{AB}}{2}}=|\lambda_{\bA\bB}|$ (see the paragraph on loxodromics in section \ref{section-isom-types}). This means that reducible 
configurations correspond to a vertical line in the cycliner $\mathcal{C}_{{\rm lox}}$. In particular, 
the complement of this line is connected. The two key facts are then the following.

First,  it is a general fact in Lie groups that the map $\pi$ has maximal rank at an irreducible pair (see for instance \cite{PopR}, 
or the last section of \cite{Go3} for similar facts in a different context). This imply that the 
restriction of $\pi$ to the set of irreducible pairs is an open map. Secondly, the map $\pi$ is proper. This can be seen as a 
consequence of the Bestvina-Paulin compacity Theorem (see \cite{Best}), as in \cite{FW}. In our special case though, it can be proved in 
a more elementery way, in the spirit of the next section. As a consequence of these two facts, the image $\pi$ must contain the 
whole complement of the reducible vertical line. We refer the reader to \cite{FW} for more details.

\subsubsection{The image of $\Phi_{2,1}$.}
We are now going to adress the question of the image of the map $\Phi_{2,1}$ in the case where $A$ and $B$ are loxodromic, and we will 
see that in this case, the obstruction observed at the end of section \ref{sectionclasspairs} is the only one. More precisely we  
prove the following.
\begin{theorem}\label{pairexists}
Let $z_A$, $z_B$, $z_{AB}$ and $z_{A^{-1}B}$ be four complex numbers. Assume that $z_A$ and $z_B$ satisfy
$$f(z_A)>0\mbox{ and }f(z_B)>0,$$ 
where $f$ is the resultant function defined in Proposition \ref{deltofunc} (this means that $z_A$ and $z_B$ are loxodromic traces). 
Denote by $z$ the 4-tuple $z=(z_A,z_B,z_{AB},z_{A^{-1}B})\in\C^4$ and by $Q$ the polynomial $S^2-4P$, where $S$ and $P$ are the two 
polynomials defined in  \eqref{sumroots} and \eqref{prodroots}. The following two conditions are equivalent.
\begin{enumerate}
 \item There exists a pair $(\bA,\bB)$ of loxodromic matrices in SU(2,1) such that $\Phi_{2,1}(\bA,\bB)=z$.
 \item The inequality  $Q(z,\overline{z})\leqslant 0$ holds.
\end{enumerate}
\end{theorem}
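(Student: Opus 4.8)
The plan is to prove the two implications separately, noting that the implication $(1)\Rightarrow(2)$ is already essentially established in the text. Indeed, at the end of Section \ref{sectionclasspairs} it is argued via the involution $\bA\mapsto\bA^\tau$ and relations \eqref{tracestar} that on SU(2,1)$\times$SU(2,1) the trace equation \eqref{trace equation} can never have two \emph{distinct} real roots, hence its discriminant $Q(z,\overline z)=S^2-4P$ must be non-positive. This gives $(1)\Rightarrow(2)$ for free, with no loxodromicity assumption needed. So the entire content of the theorem is the converse $(2)\Rightarrow(1)$: given four complex numbers with $z_A,z_B$ loxodromic traces and $Q(z,\overline z)\leqslant 0$, one must \emph{construct} a genuine pair of loxodromic matrices in SU(2,1) realizing these four traces.

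For the construction I would proceed by explicit normalisation, exploiting the loxodromic hypothesis. First I would fix $\bA$ once and for all in its conjugacy class: since $z_A$ is a loxodromic trace, by \eqref{class-conj-loxo} I may take $\bA=\bA_\lambda$ diagonal with eigenvalues $\lambda,\overline\lambda/\lambda,1/\overline\lambda$ where $|\lambda|>1$ and $\tr\bA_\lambda=z_A$. The remaining freedom is the choice of $\bB$ in its own loxodromic conjugacy class (determined by $z_B$) together with the relative position of $\bA$ and $\bB$. The strategy is to write $\bB$ with enough free parameters to solve for the prescribed values of $\tr\bA\bB$ and $\tr\bA^{-1}\bB$, and then to check that the resulting $\bB$ can be taken in SU(2,1). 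Concretely, I would write $\bB=g\bA_\mu g^{-1}$ where $\bA_\mu$ is the diagonal normal form with $\tr\bA_\mu=z_B$ and $g$ is a parameter in SU(2,1); the two traces $\tr\bA_\lambda g\bA_\mu g^{-1}$ and $\tr\bA_\lambda^{-1}g\bA_\mu g^{-1}$ then become explicit functions of the entries of $g$, and I must show the pair $(z_{AB},z_{A^{-1}B})$ lies in the image precisely when $Q(z,\overline z)\leqslant 0$.

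The cleanest route to surjectivity onto the right region is to invoke Theorem \ref{theo-FVLawton} abstractly rather than to solve the system by hand. By part (2) of that theorem, $\Psi_3$ is a branched double cover of $\C^8$, so for the eight $\mathrm{SL}(3,\C)$-traces built from $z$ via $\tr\bA^{-1}=\overline{\tr\bA}$ there always exists a pair $(\bA,\bB)$ in $\mathrm{SL}(3,\C)$ with $\Phi_{2,1}(\bA,\bB)=z$, and moreover $\tr[\bA,\bB]$ is one of the two roots of \eqref{trace equation}. The hypothesis $Q(z,\overline z)\leqslant 0$ says these two roots are complex conjugate (or a real double root); the involution argument of \eqref{tracestar} shows that the two abstract $\mathrm{SL}(3,\C)$-conjugacy classes sitting over $z$ are swapped by $(\bA,\bB)\mapsto(\bA^\tau,\bB^\tau)$, and hence that one of them — in fact both, by conjugacy-symmetry of the discriminant — is represented in SU(2,1) exactly when the roots are not two distinct reals. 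The point is that the SU(2,1)-reality obstruction is the \emph{only} obstruction, so $Q(z,\overline z)\leqslant 0$ suffices to land the pair in the real form. Finally I would verify that the resulting $\bA,\bB$ are actually loxodromic: $\bA$ is loxodromic by construction since $f(z_A)>0$ (Proposition \ref{deltofunc}), and likewise for $\bB$ from $f(z_B)>0$.

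The main obstacle, and the step requiring genuine care, is the passage from an abstract $\mathrm{SL}(3,\C)$-pair to an SU(2,1)-pair: Proposition \ref{classSU21} and its proof show that \emph{once a pair is known to be conjugate to its $\tau$-image it lies in SU(2,1)}, but one must rule out the degenerate possibilities where the double-cover $\Psi_3$ branches (the non-irreducible or non-semisimple locus) and where the Zariski-density argument of Proposition \ref{classSU21} could fail. Here the loxodromic hypothesis on $z_A,z_B$ does real work: it guarantees $\bA$ and $\bB$ are semisimple with distinct moduli of eigenvalues, which forces irreducibility of the pair for generic relative position and lets one apply Proposition \ref{classSU21} to promote the conjugating element to SU(2,1). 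I would therefore spend the bulk of the argument confirming that the reducible and reducibility-degenerate configurations form a proper subvariety that does not obstruct realizing the full region $Q(z,\overline z)\leqslant 0$, using the openness and properness facts recalled in the proof of Theorem \ref{theo-FW}.
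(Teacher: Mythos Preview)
Your $(1)\Rightarrow(2)$ is fine and matches the paper. For $(2)\Rightarrow(1)$ your route is genuinely different from the paper's, and as written it has a gap at exactly the point you flag as ``the main obstacle''.

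The paper does not go through Lawton's surjectivity at all. Instead it works constructively inside SU(2,1) from the start: it normalises a loxodromic pair by its tetrahedron of fixed points $(p_A,q_A,p_B,q_B)$ as in \eqref{normalfixes}--\eqref{normal-2loxo}, expresses $\tr\bA\bB$ and $\tr\bA^{-1}\bB$ as affine functions of the Kor\'anyi--Reimann cross-ratios $\X_1,\X_2$ (equations \eqref{traceAB-KR}--\eqref{traceAmB-KR}), and inverts this linear system to solve for $\X_1,\X_2$ in terms of the prescribed traces. The crux is then a brute-force identity, Lemma \ref{Lemma-strike}, asserting that $Q(z,\overline z)$ factors (up to a positive scalar) as the product of the two quantities appearing in the compatibility inequality \eqref{compatcross}. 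Thus $Q(z,\overline z)\leqslant 0$ is \emph{exactly} the condition that the candidate $(\X_1,\X_2)$ lie on the cross-ratio variety, i.e.\ that an ideal tetrahedron with those cross-ratios exists (Theorem \ref{theo-classtetra}); placing $\bA,\bB$ with those fixed points finishes the construction. The loxodromic hypothesis is used to make the linear system \eqref{traceAB-KR}--\eqref{traceAmB-KR} invertible (the coefficients $g(\mu),g(\nu)$ are nonzero).

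Your abstract route breaks at the descent step. Proposition \ref{classSU21} and its Zariski-density proof require \emph{both} pairs to be in SU(2,1) already; they do not tell you that an SL(3,$\C$)-pair whose conjugacy class is stable under $(\bA,\bB)\mapsto(\overline\bA^{-T},\overline\bB^{-T})$ has an SU(2,1) representative. To make your argument work you would have to run a genuine Galois-descent: from irreducibility you get $g\in\mathrm{SL}(3,\C)$ conjugating $(\bA,\bB)$ to $(\overline\bA^{-T},\overline\bB^{-T})$, then argue that the induced sesquilinear form $\overline x^T g y$ can be taken Hermitian (this needs the cocycle condition $g\overline g^{-T}\in\mu_3$ and the vanishing of $H^1(\Z/2,\mu_3)$), and \emph{then} rule out signature $(3,0)$ --- which is where $f(z_A)>0$ finally enters, since an SU(3) element cannot have trace outside the deltoid. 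None of this is in your proposal, and the appeal to openness/properness from Theorem \ref{theo-FW} does not substitute for it: those facts concern the product map on conjugacy classes inside PU(2,1), not the passage from SL(3,$\C$) to SU(2,1). Your approach is salvageable along these lines, but as it stands the key implication is asserted rather than proved.
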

We first normalise pais of loxodromics and relate traces and cross-ratios, as in \cite{PP,Wi3}.
To any pair of loxodromic isometries is associated  the 4-tuple of fixed points $p_A$, $q_A$, $p_B$ and $q_B$. We take here the 
convention that $p_A$ (resp. $p_B$) is the attractive fixed point of $A$ (resp. $B$) and $q_A$ (resp. $q_B$) is the repulsive fixed point of $A$ 
(resp. $B$). In the Siegel model, we can conjugate by an element of PU(2,1) and assume that 
\begin{equation}\label{normalfixes}
\bp_A=\begin{bmatrix}1\\0\\0\end{bmatrix},\,
\bq_A=\begin{bmatrix}z_1\\z_2\\1\end{bmatrix},\,
\bp_B=\begin{bmatrix}0\\0\\1\end{bmatrix}\,
\mbox{ and }\bq_B=\begin{bmatrix}1\\w_2\\w_3\end{bmatrix},
\end{equation}
\n where $z_1+\overline{z_1}+|z_2|^2=w_3+\overline{w_3}+|w_2|^2=0$ because these four points belong to the boundary of $\HdC$ 
(compare to section \ref{section-quadruples}). Any pair of loxodromic isometries is then conjugate in PU(2,1) to a pair given by 
\begin{align}\label{normal-2loxo}
& \bA=\begin{bmatrix}
     \mu & \overline{z_2}g(\mu)& z_1g(\overline{\mu}^-1)+\overline{z_1}g(\mu)\\
     0 &\mu\overline{\mu}^{-1} & z_2g(\overline{\mu}^-1)\\
     0 & 0 & \overline{\mu}^{-1} 
     \end{bmatrix}\nonumber\\
\mbox{ and }&\nonumber\\
&\bB=\begin{bmatrix}
     \overline{\nu}^{-1}& 0 & 0 \\
     w_2g(\overline{\nu}^{-1}) &\overline{\nu}\nu^{-1}& 0\\
     \overline{w_3}g(\nu)+\overline{w_3}^{-1}g(\overline{\nu}^{-1}) &\overline{w_2}g(\nu) & \nu
    \end{bmatrix},
\end{align}
\noindent where  $\mu>1$, $\nu>1$ and $g(z)=z-\bar z/z$. Using these matrices, one obtains by a direct computation the 
following expressions for $\tr \bA\bB$ and $\tr \bA^{-1}\bB$, where $\X_1=\X(p_B,p_A,q_A,p_B)$ and $\X_2=\X(p_B,q_A,p_A,q_B)$ 
are the cross-ratios computed in section \ref{section-quadruples}.
\begin{align}
 \tr\bA\bB  &=g\left(\overline{\mu^{-1}}\right)g(\nu)\X_1+ g(\mu)g\left(\overline{\nu^{-1}}\right)\overline{\X_1}
+ g(\mu)g(\nu)\X_2+g(\bar\mu^{-1})g(\bar\nu^{-1})\overline{\X_2}\nonumber\\
& g(\mu)g(\nu)+g(\overline{\mu}^{-1})g(\overline{\nu}^{-1})+\mu\nu+
\dfrac{\overline{\mu\nu}}{\mu\nu}+\dfrac{1}{\overline{\mu\nu}}\label{traceAB-KR}\\
\tr\bA^{-1}\bB  &= g\left(\overline{\mu}\right)g(\nu)\X_1+ g(\mu^{-1})g\left(\overline{\nu^{-1}}\right)\overline{\X_1}
+ g(\mu^{-1})g(\nu)\X_2+g(\bar\mu)g(\bar\nu^{-1})\overline{\X_2}\nonumber\\
& +g(\mu^{-1})g(\nu)+g(\overline{\mu})g(\overline{\nu}^{-1})+\dfrac{\nu}{\mu}+ \dfrac{\mu\overline{\nu}}{\overline{\mu}\nu}+
\dfrac{\overline{\mu}}{\nu}.\label{traceAmB-KR}
\end{align}
Note that \eqref{traceAB-KR} is obtained from \eqref{traceAmB-KR} by changing $\mu$ to $\mu^{-1}$. 
Solving the system formed by these two relations, we express the cross-ratios $\X_1$ and $\X_2$ as functions 
of traces and eigenvalues, and we obtain
\begin{align}
\X_1 = &\dfrac{|\mu|^2|\nu|^2}{(\overline{\mu}^2-\mu)(\overline{\nu}^2-\nu)(|\mu|^2-1)(|\nu^2-1|)} \nonumber\\
&\times\left(\overline{\mu}\tr\bA\bB+\nu\overline{\tr\bA\bB}+\overline{\mu}\nu\tr\bA^{-1}\bB+\tr\bA^{-1}\bB+h(\mu,\nu)\right)\\
\X_2 = &\dfrac{|\mu|^2|\nu|^2}{(\mu^2-\overline{\mu})(\nu^2-\overline{\nu}^2)(|\mu|^2-1)(|\nu^2-1|)}\nonumber\\
&\times\left(\mu\nu\tr\bA\bB +\overline{\tr \bA\bB}+\nu\tr\bA^{-1}\bB+\mu\overline{\tr\bA^{-1}\bB}+h(\overline{\mu},\nu)\right),
\end{align}
\noindent where $h$ is a function of $\mu$ and $\nu$ (we do not make it explicit here, but the exact value can be found in \cite{PP}). 
The following identity is the crucial fact to prove Theorem \ref{pairexists}.
\begin{lemma}\label{Lemma-strike}
 Using the notation defined above, it holds 
\begin{align}\label{striking}
Q(z,\overline{z})=&\left(\dfrac{(|\mu|^2-1)(|\nu|^2-1)|(\nu^2-\overline{\nu})(\mu^2-\overline{\mu})|^2}{|\mu|^4|\nu|^4}\right)^2
\nonumber\\
&\times\left(\left(|\X_1|-|\X_2|\right)^2-2\Re(\X_1+\X_2)+1\right)\left(\left(|\X_1|+|\X_2|\right)^2-2\Re(\X_1+\X_2)+1\right).
\end{align}
\end{lemma}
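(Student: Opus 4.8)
The polynomial $Q = S^2 - 4P$ is nothing but the discriminant of the trace equation \eqref{trace equation}: since $S$ and $P$ are, by Proposition \ref{S and P exist}, the sum and the product of the two roots $\tr[\bA,\bB]$ and $\tr[\bA^{-1},\bB^{-1}]$, we have $Q = (\tr[\bA,\bB]-\tr[\bA^{-1},\bB^{-1}])^2$ as a polynomial identity in the eight trace variables. As noted after \eqref{tracestar}, on a genuine pair of SU(2,1) the involution $\bA\mapsto\bA^\tau$ together with \eqref{symSU21} forces these two roots to be complex conjugate, so that
\begin{equation*}
Q(z,\overline z) = -4\left(\Im\tr[\bA,\bB]\right)^2 .
\end{equation*}
This already recovers the sign $Q\leqslant 0$ and reduces the lemma to an identity for the single real quantity $\Im\tr[\bA,\bB]$.

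Next I would simplify the cross-ratio factor on the right-hand side of \eqref{striking}. Writing $a=(|\X_1|-|\X_2|)^2$, $b=(|\X_1|+|\X_2|)^2$ and $D=2\Re(\X_1+\X_2)-1$, the two bracketed factors are $a-D$ and $b-D$, and $(a-D)(b-D)=(|\X_1|^2-|\X_2|^2)^2-2D(|\X_1|^2+|\X_2|^2)+D^2$. Let $\X_3$ be the third cross-ratio of the ideal tetrahedron $(p_A,q_A,p_B,q_B)$, so that $(\X_1,\X_2,\X_3)$ is its cross-ratio triple in the sense of \eqref{choice-cross} (the pair being irreducible, this tetrahedron is non-degenerate). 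Substituting $|\X_2|^2=|\X_1|^2|\X_3|^2$ from \eqref{X-eq-1} and $D=|\X_1|^2+|\X_2|^2-2|\X_1|^2\Re\X_3$ from the Gram relation \eqref{X-eq-2}, a short computation collapses this product to
\begin{equation*}
(a-D)(b-D) = -4\,|\X_1|^4\left(\Im\X_3\right)^2 .
\end{equation*}
Hence the lemma is equivalent to the identity $\Im\tr[\bA,\bB]=\pm R\,|\X_1|^2\,\Im\X_3$, where $R\geqslant 0$ is the prefactor appearing squared on the right-hand side of \eqref{striking}; the sign is immaterial since both sides are squared in the final statement.

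It remains to establish this last identity, which is the computational heart of the lemma. I would substitute the explicit values of the four traces into $S$ and $P$: the traces $\tr\bA$, $\tr\bB$ and their conjugates are read off the diagonals of \eqref{normal-2loxo} as functions of $\mu,\nu$, while $\tr\bA\bB$, $\tr\bA^{-1}\bB$ and their conjugates are the affine functions of $\X_1,\X_2,\overline{\X_1},\overline{\X_2}$ given by \eqref{traceAB-KR} and \eqref{traceAmB-KR}. Expanding $Q=S^2-4P$ then produces a polynomial in $\X_1,\X_2$ and their conjugates with coefficients rational in $\mu,\overline\mu,\nu,\overline\nu$, which one checks factors as claimed. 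The prefactor $R$ is built precisely from the determinantal quantities $(\mu^2-\overline\mu)$, $(\nu^2-\overline\nu)$, $|\mu|^2-1$, $|\nu|^2-1$ and their conjugates that arise as denominators when the linear system \eqref{traceAB-KR}--\eqref{traceAmB-KR} is inverted to solve for $\X_1,\X_2$, which is why $R^2$ appears. The main obstacle is purely the size of this expansion --- the polynomial $P$ in \eqref{prodroots} already has some sixty monomials --- so I would exploit the reduction above, matching only $\Im\tr[\bA,\bB]$ rather than the full quartic $Q$, and carry out the remaining algebra in a computer algebra system; alternatively, computing $\tr[\bA,\bB]$ directly by multiplying the four matrices of \eqref{normal-2loxo} is shorter and makes the linear dependence on $\Im\X_3$ transparent.
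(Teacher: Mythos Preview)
Your proposal is correct, and in fact goes further than the paper's own proof, which reads in its entirety: ``To prove Lemma \ref{Lemma-strike}, one needs to plug the values of $\tr \bA\bB$ and $\tr \bA^{-1}\bB$ given by \eqref{traceAB-KR} and \eqref{traceAmB-KR} in the polynomial $S^2-4P$. I don't know a better proof than using brute force and a computer.'' Your two preliminary reductions --- rewriting $Q$ as $-4(\Im\tr[\bA,\bB])^2$ via the conjugacy of the roots of the trace equation, and collapsing the cross-ratio product to $-4|\X_1|^4(\Im\X_3)^2$ via the relations \eqref{X-eq-1}--\eqref{X-eq-2} --- are correct (the second computation indeed yields $4(\Re\X_3)^2-4|\X_3|^2=-4(\Im\X_3)^2$ after substitution) and genuinely clarify why the identity has the shape it does; in particular they make transparent that both sides are nonpositive and vanish simultaneously. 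The residual identity $\Im\tr[\bA,\bB]=\pm R\,|\X_1|^2\,\Im\X_3$ that you isolate still requires a machine check, as you acknowledge, so ultimately both routes end at a computer; but yours reduces the symbolic verification from the sixty-term polynomial $P$ to a single matrix product, and the paper itself notes (in the remark following the proof of Theorem \ref{pairexists}) that the expression of $\tr[\bA,\bB]$ in terms of $\mu,\nu,\X_1,\X_2,\X_3$ is known and recorded in \cite{Parktraces,Wi4}.
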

To prove Lemma \ref{Lemma-strike}, one needs to plug the values of $\tr \bA\bB$ and $\tr \bA^{-1}\bB$ given by \eqref{traceAB-KR} 
and \eqref{traceAmB-KR} in the polynomial $S^2-4P$. I don't know a better proof than using brute force and a computer.
\begin{proof}[Proof of Theorem \ref{pairexists}.]
We already know from the discussion at the end of section \ref{sectionclasspairs} that the non-positivity of 
$Q(z,\overline{z})$ is a necessary condition. Conversely, assume that $Q(z,\overline{z})$ is negative. 
Solving \eqref{traceAB-KR} and \eqref{traceAmB-KR} with respect to $\X_1$ and $\X_2$ gives us two complex numbers 
$x_1$ and $x_2$. Proving that a pair of matrices exists such that $\Psi(\bA,\bB)=z$ is equivalent to proving that that there exists 
an ideal tetrahedron which is formed by the fixed point of $\bA$ and $\bB$ in such a way that $\X(p_B,p_A,q_A,q_B)=x_1$ and 
$\X(p_B,q_A,p_A,q_B)=x_2$. In other words, we need to check that $x_1$ and $x_2$ lie on the cross-ratio variety.
But as $Q(z,\overline{z})\leqslant 0$, Lemma \ref{Lemma-strike} implies that the numbers $x_1$ and $x_2$ satisfy 
\begin{equation}\left(\left(|x_1|-|x_2|\right)^2-2\Re(x_1+x_2)+1\right)
 \left(\left(|x_1|+|x_2|\right)^2-2\Re(x_1+x_2)+1\right)\leqslant 0
\end{equation}
The left hand side factor is smaller than the right hand side one, and this implies that $x_1$ and $x_2$ satisfy the double 
inequality \eqref{compatcross} and they can therefore be interpreted as cross-ratios.
\end{proof}
\begin{remark}
Relations \eqref{traceAB-KR} and \eqref{traceAmB-KR} show that, when $A$ and $B$ are loxodromic, fixing $\tr\bA\bB$ and 
$\tr\bA^{-1}\bB$ amounts to fixing $\X_1$ and $\X_2$. In fact, using the normalisation \eqref{normal-2loxo}, it is possible
to compute the trace of the commutator in terms of the eigenvalues $\mu$ and $\nu$ and the cross-ratios $\X_1$, $\X_2$ and $\X_3$.
The exact expression can be found in \cite{Parktraces} or \cite{Wi4}. The value of $\X_3$ is determined up to the sign of its 
imaginary part from $\X_1$ and $\X_2$, just as $\tr[\bA,\bB]$ is determined up to the same ambiguity by $\Phi(\bA,\bB)$.
\end{remark}
In a recent preprint \cite{GongoPar}, Gongopadhyay and Parsad began a similar work for two generators subgroups of SU(3,1), and 
classified pairs of loxodromic isometries using traces and cross-ratios.

\subsection{Pairs of elliptics\label{section-paupert}}
In \cite{PopR}, Paupert has adressed the question of knowing which elliptic conjugacy classes are in the image of 
the map $\pi$ defined in \eqref{mu-prod} when $\mathcal{C}_1$ and $\mathcal{C}_2$ are two elliptic conjugacy classes in PU(2,1). 
Recall that the conjugacy class of an elliptic element is described by an (unordered) pair of angles 
$\{\theta_1,\theta_2\}\in[0,2\pi]^2$ (see the discussion on elliptics in section \ref{section-isom-types}). To fix a chart, 
we make the choice that $\theta_2\leq \theta_1$.  The set of conjugacy classes of elliptic elements is then identified with 
the quotient $\mathbb{T}^2/\mathfrak{S}$, where $\mathbb{T}^2$ is the torus $\R^2/(2\pi\Z)^2$ and $\mathfrak{S}$ is the 
reflection about the diagonal. In affine chart it appears as the (closed) subdiagonal triangle of the square $[0,2\pi]^2$, 
where the horizontal and vertical sides are identified as indicated on figure \ref{elliptic-classes}.
We can thus rephrase the problem as: if $A$ has angles $\{\theta_1,\theta_2\}$ and $B$ has angles $\{\theta_3,\theta_4\}$, what are 
the possible angles $\{\theta_5,\theta_6\}$ for the product $AB$? 
\subsubsection{Reducible cases.}
Paupert begins with analysing the reducible configurations, which are as follows.
\begin{enumerate}
\item The pair $(A,B)$ is {\it totally reducible} if $A$ and $B$ commute, that is if $A$ and $B$ have a common fixed point and 
the same invariant complex lines (see section \ref{section-isom-types}).
\item The pair $(A,B)$ is {\it spherical reducible} if $A$ and $B$ have a common fixed point.
\item The pair $(A,B)$ is {\it hyperbolic reducible} if $A$ and $B$ have a common stable complex line.
\end{enumerate}
\paragraph{Totally reducible pairs.}
In general, there are two totally reducible conjugacy classes for $AB$, which correspond to the two pairs of angles 
$\{\theta_1+\theta_3,\theta_2+\theta_4\}$ and $\{\theta_1+\theta_4,\theta_2+\theta_3\}$ 
(sums are taken mod $2\pi$). These two conjugacy classes correspond in general to two 
points $D_1$ and $D_2$ in $\mathbb{T}^2/\mathfrak{S}$. In special cases, these two points can equal (this is the case 
for instance if one of the two conjugacy classes correspond to a complex reflection about a point, which always has two equal 
rotation angles).
\paragraph{Spherical reducible pairs.}
In the case where $A$ and $B$ have a common fixed point, they can be lifted to U(2,1) as follows (here we use the ball model of $\HdC$).
\begin{equation}
 {\bA}=\begin{bmatrix}
        e^{i\theta_1} & 0 & 0\\
        0 & e^{i\theta_2} & 0\\
        0 & 0 & 1
       \end{bmatrix}
\mbox{ and }
 {\bB}=\begin{bmatrix}
        \bB' & \\
           & 1
       \end{bmatrix},\mbox{where $\bB'\in$ U(2).}
\end{equation}
The determinants of $\bA$, $\bB$ and $\bA\bB$ are respectively equal to  $e^{i(\theta_1+\theta_2)}$, $e^{i(\theta_3+\theta_4)}$ and 
$e^{i(\theta_5+\theta_6)}$, and therefore we see that
\begin{equation}\label{slope-1}
 \theta_5+\theta_6 = \theta_1+\theta_2+\theta_3+\theta_4 + 2k\pi \mbox{ with }k\in\Z.
\end{equation}
Relation \eqref{slope-1} shows that this segment has slope $-1$ in chart points corresponding to spherical reducible configurations
are contained in a line of slope $-1$ or a union of such lines. In fact, the allowed pairs of angles for $AB$ are exactly the points 
of the convex segment connecting $D_1$ to $D_2$ in the torus (which can appear as the union of two disconnected segments in affine
 chart). This fact follows for instance from the more general \cite{Bis,FW1}. However, in this special case, it can be obtained by 
analysing the action of $A$ and $B$ on the $\C P^1$ of complex lines through their common fixed point and use spherical 
geometry. This point of view is exposed in \cite{FMS}. One can verify that the integer $k$ in \eqref{slope-1} can in fact 
only take the values $0$, $-1$ and $-2$. It can be interpreted as a Maslov index (see the references in \cite{PopR}).
\begin{figure}
 \begin{center}
  \scalebox{0.6}{\includegraphics{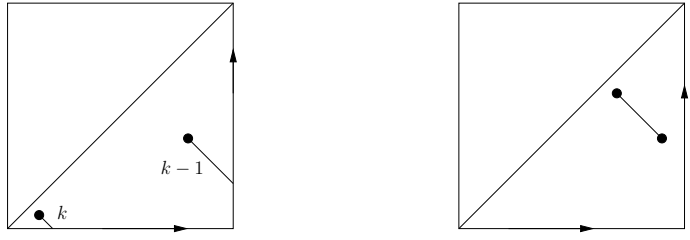}}\\
 \end{center}
\caption{Possible configurations of spherical reducible pairs.\label{elliptic-classes}
Black dots correspond to totally reducible configurations. The spherical reducible segment can appear connected (right) or 
disconnected (left) depending on the value of the integer $k$ given by equation \eqref{slope-1}  }
\end{figure}
\begin{figure}
\begin{center}
\begin{tabular}{ccc}
\scalebox{0.8}{\scalebox{0.4}{\includegraphics{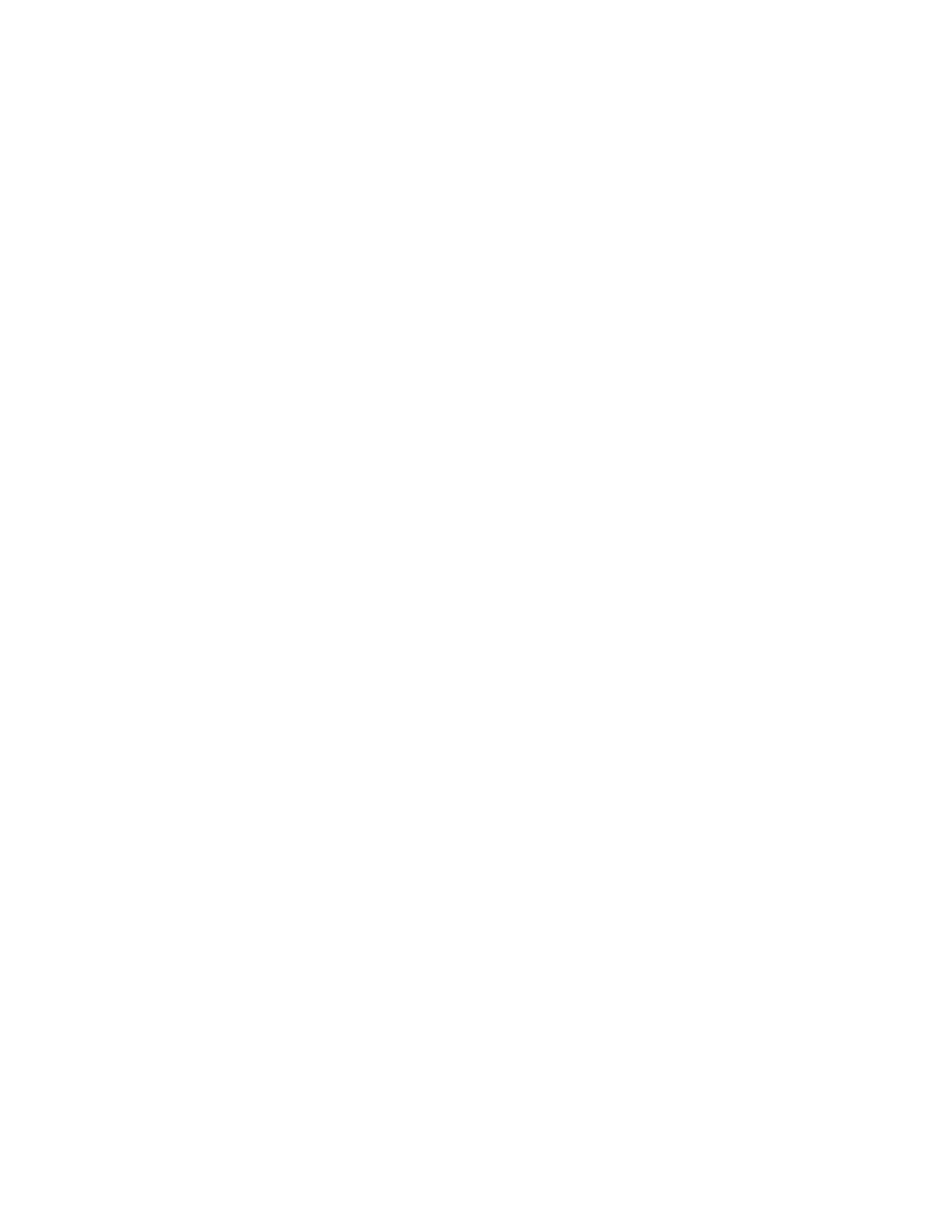}}} && \scalebox{0.8}{\scalebox{0.32}{\includegraphics{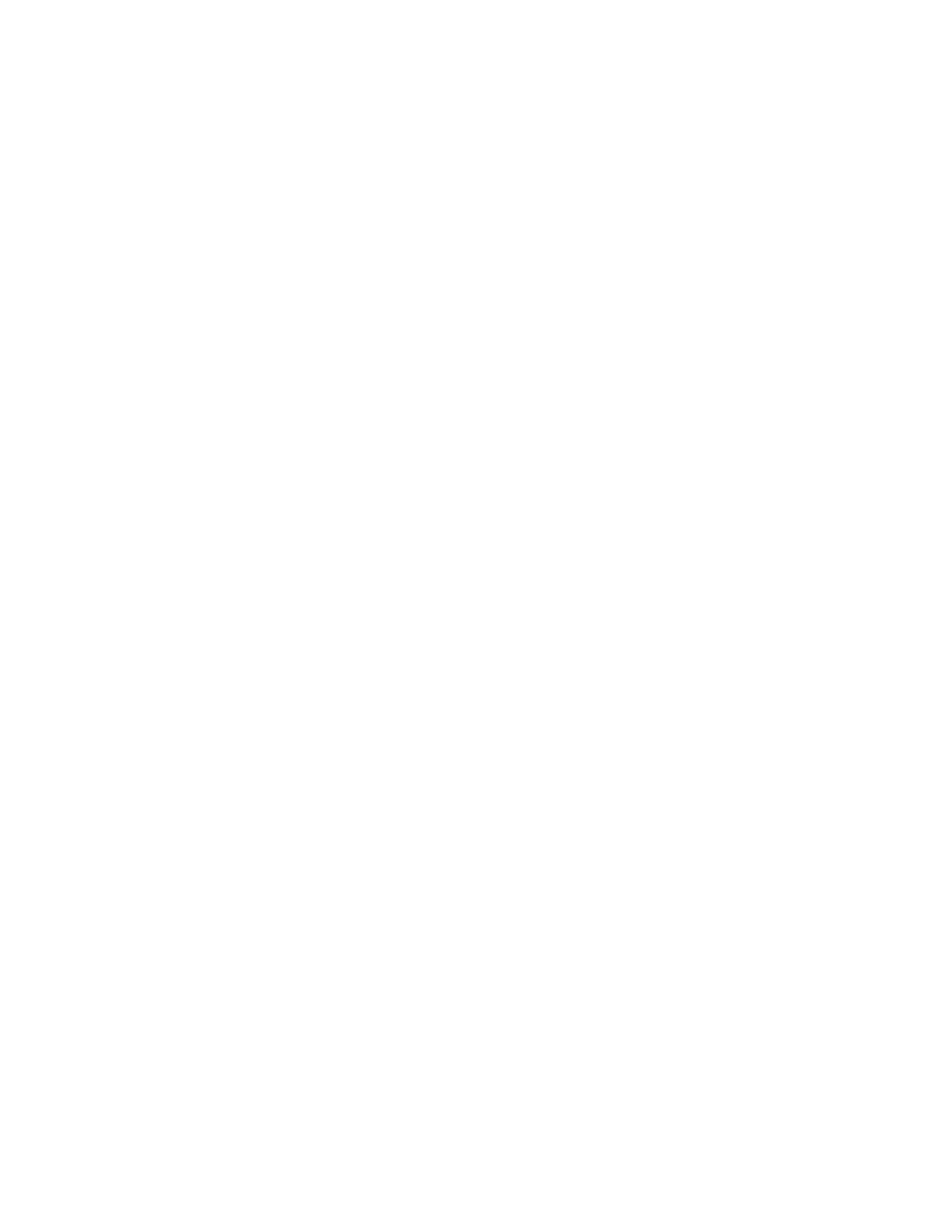}}}\\
&&\\
\begin{minipage}{5cm}$\{\frac{\pi}{3},\frac{\pi}{3}\}$,\,$\{\frac{\pi}{4},\frac{\pi}{4}\}$ : $A$ and $B$ are reflections about points.The spherical 
reducible  segment collapses to a point. All representations are hyperbolic reducible\end{minipage}
&&\begin{minipage}{5cm}$\{\frac{2\pi}{3},0\}$,\,$\{\frac{8\pi}{5},0\}$ : $A$ and $B$ are reflections about lines. As the mirrors intersect in $\C P^1$, all 
pairs are reducible, either spherical (when the intersection is inside $\HdC$ or hyperbolic if not). \end{minipage} \\
&&\\
\scalebox{0.8}{\scalebox{0.4}{\includegraphics{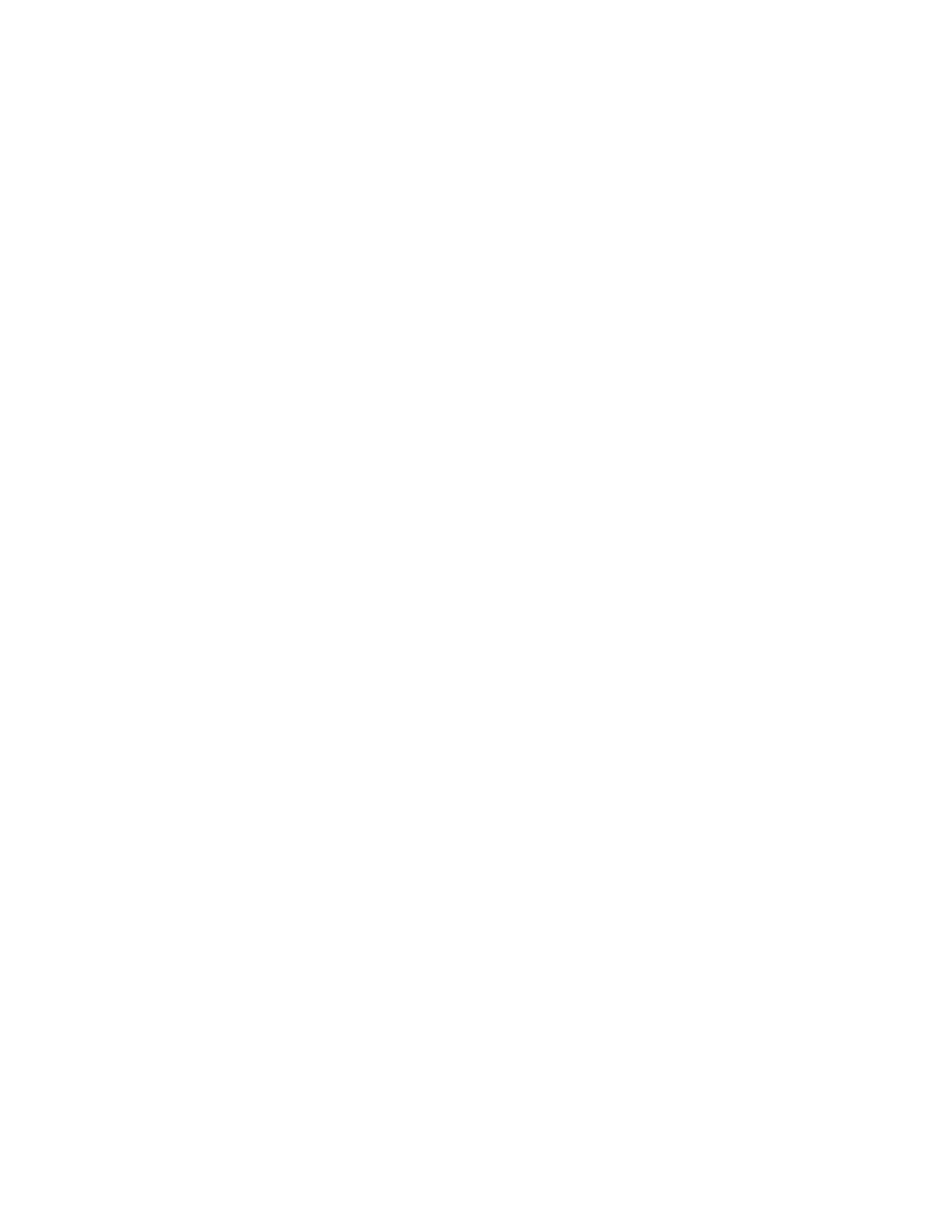}}} && \scalebox{0.8}{\scalebox{0.32}{\includegraphics{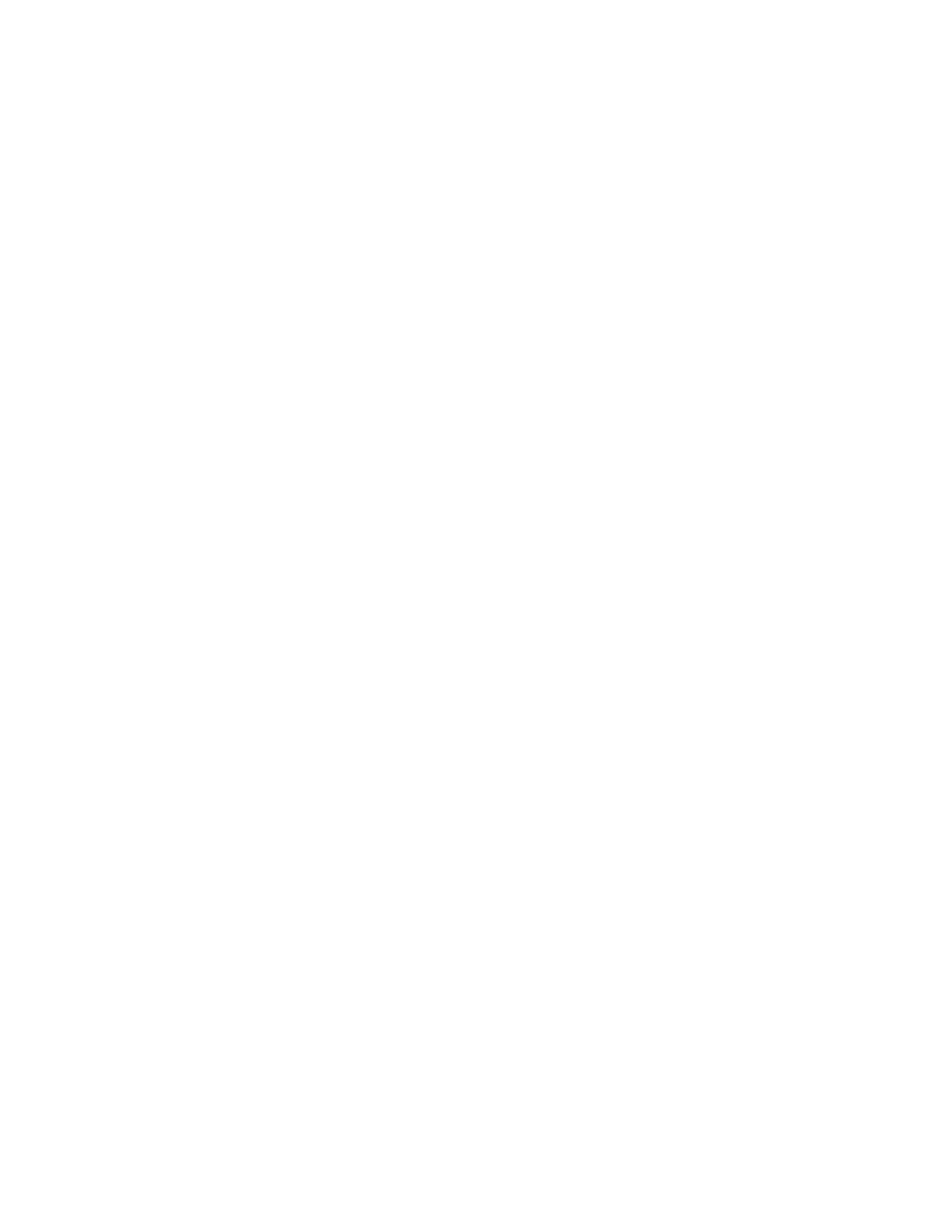}}} \\
&&\\
\begin{minipage}{5cm}$\{\frac{\pi}{2},\frac{\pi}{3}\}$,\,$\{\frac{\pi}{4},\frac{\pi}{4}\}$ : one of the two classes is a 
reflection about a point. This makes the spherical reducible segment collapse to a point.  
\end{minipage}
 &&\begin{minipage}{5cm}$\{\frac{4\pi}{3},\frac{2\pi}{3}\}$,\,$\{\frac{2\pi}{3},0\}$ : one of the two classes is a 
reflection about a line. \end{minipage}\\
\end{tabular}
\end{center}
\caption{The image of the map $\pi$ when at least one of the two classes $\mathcal{C}_1$ and $\mathcal{C}_2$ is a complex reflection\label{polyg-image-reflection}. }
\end{figure}
\paragraph{Hyperbolic reducible pairs.}
The case where $A$ and $B$ preserve a common complex line is dealt with in a similar way. The (elliptic) product $AB$ is determined by 
two angles $(\theta_C,\theta_N)$, where $\theta_C$ is the rotation angle in the common stable complex line, and $\theta_N$ is the 
rotation angle in the normal direction. There are therefore a priori 4 families of hyperbolic reducible configurations, depending 
on the respective rotation angle of $A$ and $B$ in the common stable complex line. Let us assume that $A$ and $B$ 
respectively rotates of angles $\theta_2$ and $\theta_4$ in their common stable complex line. Then, using an adapted basis 
for $\C^3$ they can be lifted to U(2,1) (in the ball model Hermitian form):
\begin{equation}
\bA =\begin{bmatrix}
      e^{i\theta_1} & 0 & 0\\
      0 & e^{i\theta_2} & 0\\
      0 & 0 & 1
     \end{bmatrix}
\mbox{ and }
\bB=\begin{bmatrix}
     e^{i\theta_3} & \\
      & \bB'
    \end{bmatrix}
\mbox{ where }\bB'\in\mbox{U(1,1)}. 
\end{equation}
Here $\bB'$ has positive type eigenvalue $e^{i\theta_4}$ and negative type eigenvalue $1$. The product is given by
\begin{equation}
\bA\bB =\begin{bmatrix}
      e^{i(\theta_1+\theta_3)} & \\
      0 & \bC'
     \end{bmatrix},
\end{equation}
where $\bC'$ has eigenvalues $e^{i\psi_1}$ (positive type) and $e^{i\psi_2}$ (negative type). Therefore the rotation angles of $AB$ 
are given by $\theta_C=\psi_1-\psi_2$ and $\theta_N=\theta_1+\theta_3-\psi_2$. Considering determinant again, we see that 
\begin{equation}
\theta_C=2\theta_N+\theta_2+\theta_4-2\theta_1-2\theta_3\mod 2\pi. 
\end{equation}
In particular, this means that the pair $\lbrace\theta_C,\theta_N\rbrace$ lie on a (family of) lines of slope $1/2$ or $2$
(note that the slope here is only defined up to $x\longmapsto 1/x$ because of the action of the symmetry about the diagonal of 
the square).The precise range for $\theta_C$ in this reducible case is exactly the set possible rotation angles of the product 
of two elliptic elements of the Poincar\'e disc of respective angles $\theta_2$ and $\theta_4$ (see Proposition 2.3 and 
Lemma 2.1 of \cite{PopR}). In Paupert's notation \cite{PopR}, the family we have just described is denoted $C_{24}$, and the three 
others are $C_{13}$, $C_{23}$ and $C_{14}$, where each time the pair of indices gives the rotation angles in the common stable 
complex line. These four families are given by the following Proposition (Proposition 2.3 of \cite{PopR}).
\begin{proposition}
 The familly $C_{ij}$ corresponds to the points $\{\theta_C,\theta_N\}$ so that 
$\theta_C=2\theta_N+\theta_i+\theta_j-2\theta_k-2\theta_l \mod 2\pi$ (with $i,j,k,l \in\{1,2,3,4\}$ pairwise disjoint), and 
\begin{enumerate}
 \item $\theta_C\in]\theta_i+\theta_j,2\pi[$ if $\theta_i+\theta_j<2\pi$
 \item $\theta_C\in]0,\theta_i+\theta_j-2\pi[$ if $\theta_i+\theta_j>2\pi$
\end{enumerate}
\end{proposition}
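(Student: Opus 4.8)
The plan is to start from the block-diagonal normal form that has just been set up for hyperbolic reducible pairs and to treat the two assertions separately: the defining linear relation is pure determinant bookkeeping, whereas the range of $\theta_C$ is really a one-dimensional statement about the Poincar\'e disc. Throughout I write $L$ for the common stable complex line and use that the stabiliser of $L$ in PU(2,1) splits as a rotation in the direction normal to $L$ times the action of PU(1,1) on $L$, so that the entire computation takes place in $\mathrm{U}(1)\times\mathrm{U}(1,1)$ acting on the decomposition $\C^3=(\text{normal line})\oplus L$.

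For the family $C_{ij}$ I would fix the convention that $A$ rotates by $\theta_i$ inside $L$ and by $\theta_k$ in the normal direction, while $B$ rotates by $\theta_j$ inside $L$ and by $\theta_l$ in the normal direction, where $\{i,k\}=\{1,2\}$ and $\{j,l\}=\{3,4\}$. In an adapted basis this produces exactly the lifts displayed above (with $\theta_1,\theta_2,\theta_3,\theta_4$ replaced by $\theta_k,\theta_i,\theta_l,\theta_j$), so that $\bA\bB$ is block-triangular with normal eigenvalue $e^{i(\theta_k+\theta_l)}$ and with $\mathrm{U}(1,1)$-block $\bC'=\mathrm{diag}(e^{i\theta_i},1)\,\bB'$ whose positive- and negative-type eigenvalues are $e^{i\psi_1}$ and $e^{i\psi_2}$. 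Writing $\theta_C=\psi_1-\psi_2$ and $\theta_N=(\theta_k+\theta_l)-\psi_2$ as in the text, the identity $\det\bC'=e^{i\theta_i}\det\bB'=e^{i(\theta_i+\theta_j)}$ gives $\psi_1+\psi_2\equiv\theta_i+\theta_j\pmod{2\pi}$. Eliminating $\psi_1$ and $\psi_2$ between this relation and the definitions of $\theta_C$ and $\theta_N$ yields
\begin{equation*}
\theta_C=2\theta_N+\theta_i+\theta_j-2\theta_k-2\theta_l\pmod{2\pi},
\end{equation*}
which is the first part of the statement; this is simply the $C_{24}$ determinant computation carried out for a general splitting of the four angles.

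The substance is the range of $\theta_C$. The key observation is that $\theta_C=\psi_1-\psi_2$ is the rotation angle of $\bC'$ acting on $L$, and that $\bC'$ is the product of the two elliptic isometries of the Poincar\'e disc $L$ obtained by restricting $A$ and $B$, which rotate by $\theta_i$ and $\theta_j$ about two a priori distinct centres; letting $\bB'$ range over all $\mathrm{U}(1,1)$-rotations of angle $\theta_j$ makes the hyperbolic distance $d$ between these centres range over $[0,\infty[$. The problem thus reduces to describing the rotation angle of the product of two disc rotations of fixed angles as $d$ varies. I would normalise one centre to the origin, place the other at distance $d$ on the real axis, and compute the trace of the $\mathrm{SU}(1,1)$-normalisation of $\bC'$, obtaining
\begin{equation*}
2\cos\frac{\theta_C}{2}=2\cos\frac{\theta_i}{2}\cos\frac{\theta_j}{2}-2\cosh(d)\,\sin\frac{\theta_i}{2}\sin\frac{\theta_j}{2}.
\end{equation*}
At $d=0$ this recovers $\theta_C\equiv\theta_i+\theta_j$, and since $\sin(\theta_i/2)\sin(\theta_j/2)>0$ the right-hand side decreases strictly with $d$, so $\bC'$ stays elliptic until the trace reaches $-2$ (the parabolic transition), where the continuous lift of $\theta_C$ reaches $2\pi$. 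Alternatively one invokes Lemma 2.1 and Proposition 2.3 of \cite{PopR} directly. The only delicate point in the whole argument is this last step: the trace controls $\theta_C$ only through $\cos(\theta_C/2)$, so it sees $\theta_C$ up to sign and modulo $2\pi$, and one must follow a definite continuous branch of $\psi_1$. When $\theta_i+\theta_j<2\pi$ this branch lies in $]0,\pi[$ and $\theta_C$ increases from $\theta_i+\theta_j$ up to $2\pi$, giving $\theta_C\in\,]\theta_i+\theta_j,2\pi[$; when $\theta_i+\theta_j>2\pi$ the branch lies in $]\pi,2\pi[$, the lift decreases from $\theta_i+\theta_j$ toward $2\pi$, and reading it modulo $2\pi$ gives $\theta_C\in\,]0,\theta_i+\theta_j-2\pi[$. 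Once the orientation convention fixed earlier is used to pin down the sign of $\psi_1-\psi_2$, the two asserted open intervals follow.
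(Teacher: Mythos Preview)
Your argument is correct and follows exactly the route the paper indicates. The paper does not give a self-contained proof of this proposition: it cites it as Proposition~2.3 of \cite{PopR}, and the paragraph preceding the statement sketches precisely your two ingredients for the case $C_{24}$ --- the determinant computation yielding the slope-$2$ (or $1/2$) linear constraint, and the reduction of the range of $\theta_C$ to ``the set of possible rotation angles of the product of two elliptic elements of the Poincar\'e disc of respective angles $\theta_2$ and $\theta_4$'', with a pointer to Lemma~2.1 and Proposition~2.3 of \cite{PopR} for the latter. You have simply filled in that last step with the explicit $\mathrm{SU}(1,1)$ trace formula and the branch analysis, which is the standard way to prove the cited lemma. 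So there is no genuine difference in approach; your write-up is a fleshed-out version of what the paper leaves to the reference.
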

\subsubsection{Allowed angle pairs.}
The segments corresponding to reducible configurations are called the \textit{reducible walls}, and their set is denoted 
$W_{\rm red}$. Then any connected component of the complement of $W_{\rm red}$ in the lower half square is called a \textit{chamber}. 
This terminology comes from the analogy with the Atiyah-Guillemin-Sternberg theorem on the image of the moment map for the Hamiltonian 
action of a Lie group on a symplectic manifold (here, the product is interpreted by Paupert as a Lie group valued moment map). 
We refer the reader to \cite{PopR} and the references therein for more information on that aspect.
Analysing the situation at an irreducible pair Paupert proves that a chamber must be either full or empty. The key facts for this are
the following.
\begin{enumerate}
 \item The map $\pi$ is a local surjection at an irreducible point.  This can be seen by checking that the rank of the 
differential of $\pi$ at an irreducible pair is maximal (i.e. equal to $2$).
\item The image of $\pi$ is closed in the space of conjugacy classes of PU(2,1) (this follows for instance from \cite{FW}).
\end{enumerate}
The question is now to decide which chambers are full or empty. Paupert does not give a general statement, but provide a series of 
criteria to answer that question. The most important one is the following.
\begin{proposition}\label{prop-local-convex}
 If either $A$ nor $B$ is a complex reflection, then the image of $\mu$ contains each chamber touching a totally reducible point 
and meeting the local convex hull of $W_{red}$ at this point.
\end{proposition}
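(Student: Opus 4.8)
The plan is to reduce the statement to a purely local assertion near a totally reducible point and then exploit the full-or-empty dichotomy for chambers. Since the image of $\pi$ is closed in the space of conjugacy classes and each chamber is either entirely contained in that image or disjoint from it, to prove that a given chamber $\Sigma$ lies in the image it suffices to exhibit a \emph{single} point of $\Sigma$ that is hit by $\pi$. The whole difficulty is therefore concentrated in understanding the image of $\pi$ in an arbitrarily small neighbourhood of a totally reducible point $D_i$.

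First I would fix a totally reducible pair $(\bA_0,\bB_0)$ realizing $D_i$, that is, two commuting elliptic elements diagonalized in a common basis of $\C^3$, with $\bA_0\bB_0$ of conjugacy class $D_i$. The reducible walls passing through $D_i$ are of two kinds: the spherical reducible segment of slope $-1$ (obtained by keeping a common fixed point and letting the $\mbox{U(2)}$-block of $B$ vary) and the hyperbolic reducible segments of slope $1/2$ or $2$ (obtained by keeping a common stable complex line and reading off the product from the induced pair in $\mbox{PU(1,1)}$, as in the families $C_{ij}$). These segments genuinely lie in the image of $\pi$, by the reducible analysis carried out above. Near $D_i$ they span a convex cone with vertex $D_i$, which is exactly the local convex hull of $W_{\rm red}$ at $D_i$; call it $K$.

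The heart of the argument is a local convexity claim: for a small enough neighbourhood $U$ of $D_i$, the image of $\pi$ contains $K\cap U$. Granting this, the proof concludes quickly. A chamber $\Sigma$ that touches $D_i$ and meets $K$ occupies an angular sector at $D_i$ contained in the interior of $K$; points of that sector arbitrarily close to $D_i$ lie in $K\cap U$, hence in the image, so $\Sigma$ is full. The mechanism behind the claim is the moment-map philosophy underlying Paupert's set-up: the product map is a Lie-group-valued moment map, and near a reducible point it admits a local normal form whose image is a convex cone generated precisely by the directions of the reducible walls. Concretely, one combines the two facts already isolated above, namely that the walls sit in the image and that $\pi$ is an open map at irreducible pairs: starting from a point on a wall and deforming through nearby irreducible pairs, local surjectivity opens the image into the adjacent chambers, and convexity of the local model forces the swept region to fill $K$.

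The hard part will be establishing this local convexity claim rigorously, and here the degeneracy of $d\pi$ at the reducible vertex is the main obstacle: at a reducible pair the differential of $\pi$ fails to be surjective, so the open-mapping property cannot be invoked directly at $(\bA_0,\bB_0)$ itself. One must instead produce the convex cone either from the symplectic local normal form or from a careful second-order expansion of $\pi$ along curves tangent to the wall directions. This is exactly where the hypothesis that neither $A$ nor $B$ is a complex reflection enters: a complex reflection has a repeated rotation angle, which forces $D_1=D_2$ and collapses the spherical reducible segment (and more generally degenerates the configuration of walls at $D_i$), so that $K$ ceases to be full-dimensional and the sweeping argument breaks down. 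Under the stated hypothesis the angles are distinct, $D_1\neq D_2$, the walls meet $D_i$ in general position, and $K$ has non-empty interior, which is precisely what the local convexity argument requires.
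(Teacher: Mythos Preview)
Your reduction is correct and matches the paper's outline: exploit the full-or-empty dichotomy for chambers to reduce to exhibiting a single image point in each relevant chamber, then work locally near a totally reducible vertex $D_i$ where $d\pi$ degenerates. The paper's proof (following \cite{PopR}) is precisely the second of the two options you name: a direct computation of the second-order derivative of $\pi$ at the totally reducible pair $(\bA_0,\bB_0)$ (Lemmas~2.7 and~2.8 of \cite{PopR}), showing that the image opens into the chambers lying in the local convex hull of the walls. Your alternative via a symplectic local normal form for the group-valued moment map is in principle another route, but it is not the one taken.

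Two places deserve tightening. First, your ``sweeping'' heuristic---perturb off a wall to a nearby irreducible pair and invoke openness of $\pi$ there---does not by itself control \emph{which} chamber the perturbed product lands in; local surjectivity at a nearby irreducible point gives you an open neighbourhood of its image, but says nothing about where that image sits relative to the walls through $D_i$. That directional information is exactly what the Hessian supplies, so the sweeping picture is a consequence of the second-order computation rather than a substitute for it. Second, your account of the hypothesis is only half right: a reflection about a point has equal angles and does force $D_1=D_2$, collapsing the spherical segment, but a reflection about a line has angle pair $\{\theta,0\}$ with distinct entries, and the totally reducible points need not coincide. Both types of reflection must be excluded, and in Paupert's argument the hypothesis enters through the second-order computation itself (carried out in coordinates adapted to the eigenlines), where a repeated eigenvalue---of either flavour---causes the relevant quadratic terms to degenerate.
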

The proof of this proposition is done by analysing the second order derivative of $\pi$ at totally reducible points 
(Lemma 2.7 and 2.8 of \cite{PopR}). Paupert then gives conditions under which the image contains the corners of the lower
triangles, which lead him to a necessary and sufficient condition for the surjectivity of the map $\pi$.
\begin{theorem}
The map $\pi$ is surjective if and only if the angle pairs of $A$ and $B$ satisfy the two inequalities
\begin{align}
     \theta_1-2\theta_2+\theta_3-2\theta_4 &\geqslant 2\pi\nonumber\\
     2\theta_1-\theta_2+2\theta_3-\theta_4&\geqslant 6\pi
\end{align}
\end{theorem}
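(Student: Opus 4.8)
The plan is to translate the surjectivity of $\pi$ into the fullness of every chamber, and then to show that this fullness is controlled by the two extreme corners of the fundamental triangle, whose reachability is exactly encoded by the two displayed inequalities. I assume throughout that neither $A$ nor $B$ is a complex reflection, so that Proposition \ref{prop-local-convex} is available; the reflection cases are already covered by the reducible analysis summarised in Figure \ref{polyg-image-reflection}.

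First I would fix the combinatorics of the walls. The reducible locus $W_{\rm red}$ consists of the two totally reducible points $D_1=\{\theta_1+\theta_3,\theta_2+\theta_4\}$ and $D_2=\{\theta_1+\theta_4,\theta_2+\theta_3\}$, the spherical reducible segment of slope $-1$ joining them, and the four hyperbolic families $C_{ij}$, each a segment of slope $\frac{1}{2}$ or $2$ issuing from a totally reducible point with the range for $\theta_C$ prescribed by the Proposition recalled above. These segments tile the lower triangle into finitely many chambers. Invoking the dichotomy established from the maximal rank of $d\pi$ at irreducible pairs together with the closedness of the image, the image of $\pi$ is precisely $W_{\rm red}$ together with the union of the full chambers; hence $\pi$ is surjective if and only if every chamber is full.

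Next I would reduce the problem to the corners. By Proposition \ref{prop-local-convex}, each chamber that touches a totally reducible vertex $D_k$ and lies in the local convex hull of $W_{\rm red}$ at $D_k$ is full. Because the slope-$(-1)$, slope-$\frac{1}{2}$ and slope-$2$ walls emanating from $D_1$ and $D_2$ open cones that jointly sweep out the interior of the triangle, the only chambers whose status is not forced by this mechanism are the two sitting in the extreme corners, against the cone complementary to the reducible walls. Thus $\pi$ is onto precisely when these two corner chambers are also full, and by Proposition \ref{prop-local-convex} applied at the totally reducible vertex adjacent to each corner this holds if and only if each corner chamber meets the local convex hull of $W_{\rm red}$ there.

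Finally I would make the convex-hull condition explicit. At the relevant vertex the bounding walls are the spherical segment and the hyperbolic segment $C_{ij}$ controlling that corner; the corner chamber meets the local convex hull exactly when the hyperbolic wall opens far enough toward the corner. Writing this opening condition in the $(\theta_C,\theta_N)$ coordinates and substituting the governing relation $\theta_C=2\theta_N+\theta_i+\theta_j-2\theta_k-2\theta_l$, one corner yields $\theta_1-2\theta_2+\theta_3-2\theta_4\geqslant 2\pi$ and the other yields $2\theta_1-\theta_2+2\theta_3-\theta_4\geqslant 6\pi$. The main obstacle I anticipate is precisely this bookkeeping: one must correctly match each corner with the totally reducible vertex and the family $C_{ij}$ that govern it, keep track of the integer shifts $k\in\{0,-1,-2\}$ and the $\bmod\,2\pi$ reductions that decide whether the walls appear connected or disconnected in the affine chart, and verify that the cones from $D_1$ and $D_2$ genuinely leave only these two corner chambers undetermined, so that corner-fullness is \emph{equivalent} to full surjectivity rather than merely necessary.
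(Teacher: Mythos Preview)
The paper does not actually prove this theorem; it is stated as a result of Paupert \cite{PopR}, and the surrounding text only sketches the architecture of Paupert's argument: chambers are full or empty (local surjection at irreducible points plus closedness of the image), Proposition~\ref{prop-local-convex} fills the chambers adjacent to the totally reducible vertices, and ``Paupert then gives conditions under which the image contains the corners of the lower triangles, which lead him to a necessary and sufficient condition for the surjectivity of the map $\pi$.'' Your proposal follows exactly this architecture, so in that sense it is aligned with what the paper presents.

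That said, your proposal is not yet a proof, and the gap is precisely the one you flag at the end. The assertion that the local cones at $D_1$ and $D_2$ sweep out every chamber except the two corner ones is the heart of the argument and is not established by what you have written; it requires a genuine case analysis of how the slope-$(-1)$, slope-$2$ and slope-$\tfrac12$ walls sit in the torus chart for generic $(\theta_1,\theta_2,\theta_3,\theta_4)$, including the various $\bmod\,2\pi$ configurations. Likewise, the passage from ``the corner chamber meets the local convex hull'' to the two displayed inequalities is asserted rather than computed: you would need to identify which $C_{ij}$ bounds which corner, compute where that segment terminates relative to the corner vertex $(0,0)$ or $(2\pi,2\pi)$ (after the identification of the square's sides), and only then read off the linear inequalities. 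These are exactly the computations carried out in \cite{PopR}; without them, the equivalence (as opposed to mere necessity) remains unjustified.
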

\begin{figure}
\begin{center}
\begin{tabular}{ccc}
\scalebox{0.8}{\scalebox{0.32}{\includegraphics{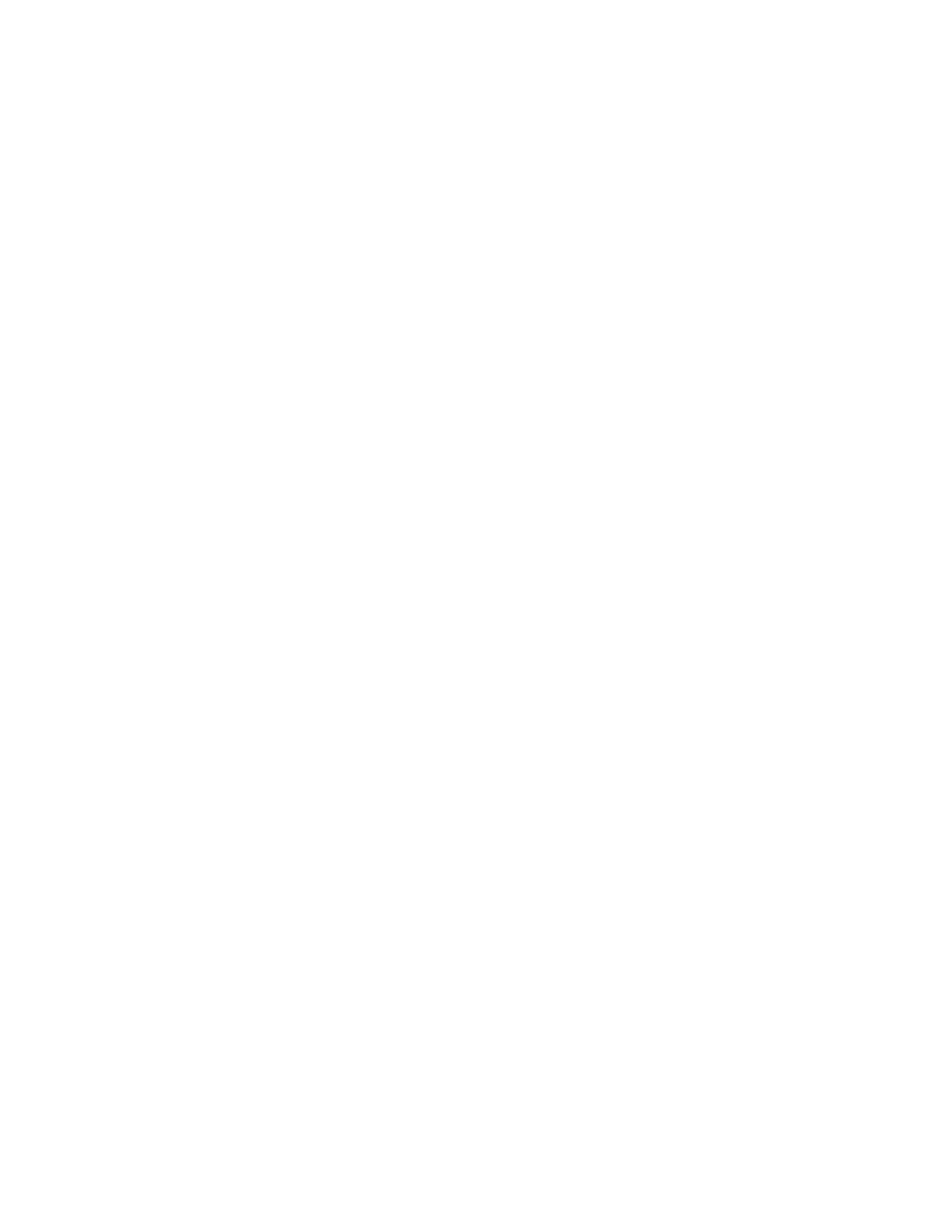}}} && \scalebox{0.8}{\scalebox{0.32}{\includegraphics{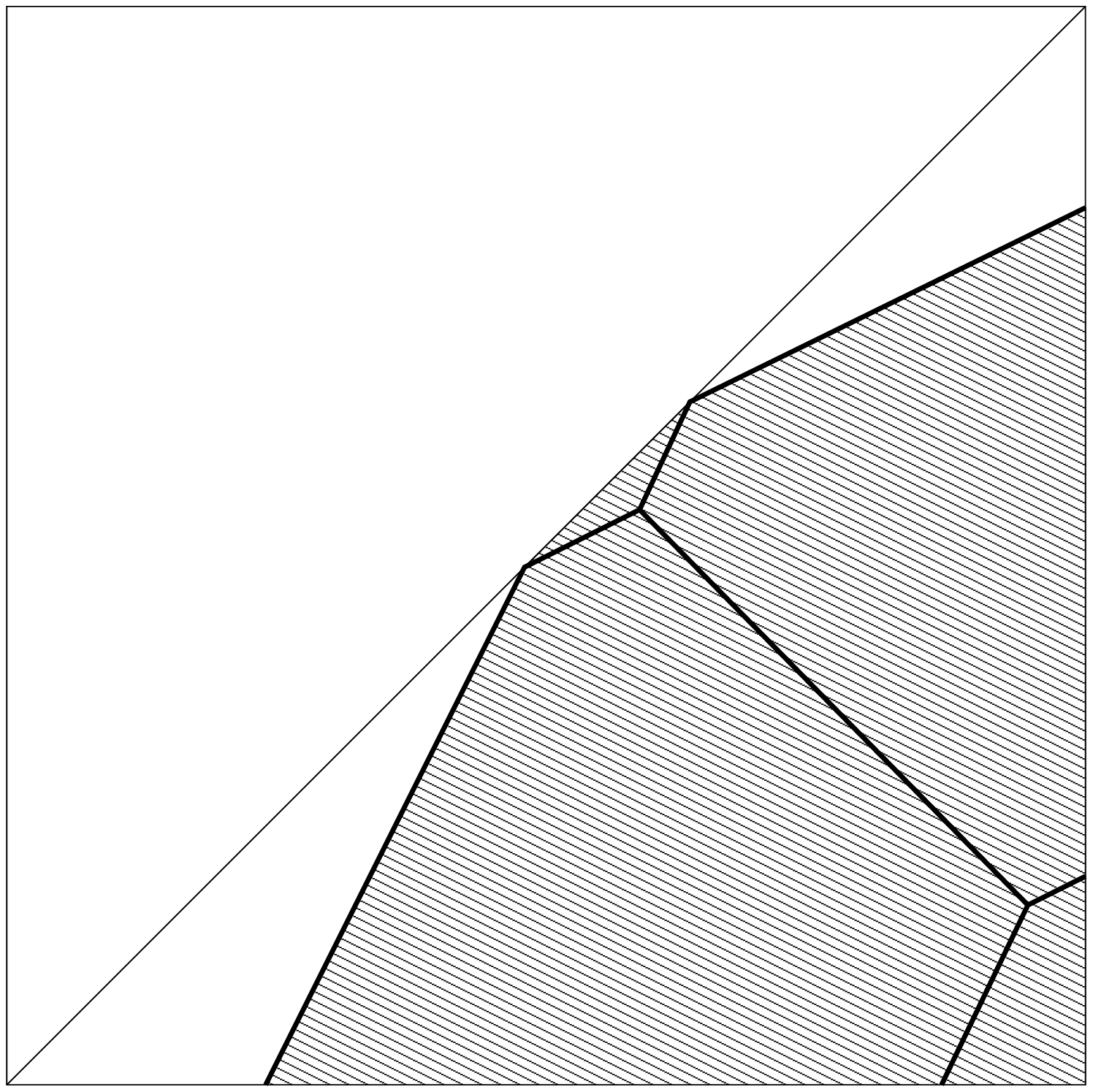}}} \\
&&\\
\begin{minipage}{5cm}$\{\frac{2\pi}{3},\frac{2\pi}{4}\}$,\,$\{\frac{2\pi}{5},\frac{2\pi}{6}\}$ : two regular elliptic pairs. Here the totally 
reducible vertices are on the boundary of the image. \end{minipage}
&&\begin{minipage}{5cm}$\{\frac{2\pi}{3},-\frac{2\pi}{4}\}$,\,$\{\frac{2\pi}{5},-\frac{2\pi}{6}\}$ : two regular elliptic pairs. Here the totally 
reducible vertices are interior points of the image.\end{minipage}\\
\end{tabular}
\end{center}
\caption{The image of  $\pi$ when the two classes $\mathcal{C}_1$ and $\mathcal{C}_2$ are regular elliptic 
\label{polyg-image-reg}. }
\end{figure}
\subsection{Pairs of parabolics, and representations of the 3-sphere in PU(2,1)}
 In \cite{ParkWi}, Parker and Will have adressed the same question as Paupert, but for parabolic 
isometries, and described all pairs $(A,B)$ such that $A$, $B$ and $AB$ are parabolic. 
Their results imply that for any  triple of parabolic conjugacy classes $(\mathcal{C}_1,\mathcal{C}_2,\mathcal{C}_3)$, there exists a 2 dimensional 
family of triples of triples $(A_1,A_2,A_3)$ such that $A_1A_2A_3=Id$ and $A_i\in\mathcal{C}_i$. In other words, if one consider 
the map $\pi$ for two parabolic conjugacy classes, then any parabolic conjugacy class is in the image. Each parabolic conjugacy 
class is determined by a unit modulus complex number $u_i$, which is the eigenvalue associated to the boundary fixed point of the 
parabolic. Denoting by $p_i$ the fixed point of $A_i$, and by $p_4=A_1^{-1}p_3$, relation \eqref{Xcross-parab} gives
\begin{equation}\X(p_1,p_2,p_3,p_4)=u_1u_2u_3.\label{X-cross-sym}\end{equation}
Parker and Will prove that to any ideal 4-tuple of points $(p_1,p_2,p_3,p_4)$, one can associate an (explicit) triple 
$(A_i)_{i=1,2,3}$ with the right conjugacy classes and satisfying \eqref{X-cross-sym}.
This gives a parametrisation of the fiber of the map $\pi$ above a parabolic class in the case where $\mathcal{C}_1$ 
and $\mathcal{C}_2$  are parabolic conjugacy classes, and shows that the set of conjugacy classes of pairs of parabolic maps 
$(A,B)$ such that $AB$ is also parabolic have dimension 5.

The next question adressed in \cite{ParkWi}, is knowing what other (non conjugate) words in the group can be 
simultaneously parabolic. The motivation for asking this comes from Schwartz's results and conjectures on the 
discreteness of triangle groups, and the generalisation of these ideas to general triangle groups (see the discussion 
in section \ref{section-Schwartz-triangle-groups}. The following result is proved.
\begin{theorem}\label{theo-parab-class}
There exists a one parameter family of conjugacy classes of pairs $(A,B)$ such that $A$, $B$, $AB$, $AB^{-1}$, 
$AB^{2}$, $A^{2}B$ and $[A,B]$ all are parabolic.
\end{theorem}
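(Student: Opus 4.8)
The plan is to exhibit an explicit one-parameter family of pairs $(A,B)$ making seven specified words simultaneously parabolic. Since the goal is seven parabolicity conditions, I would first count degrees of freedom to see that a one-parameter family is the expected outcome. By the previous subsection, conjugacy classes of pairs of parabolics $(A,B)$ with $AB$ parabolic already form a $5$-dimensional family, parametrised (via Lemma \ref{lem-cross-ratio-eigenvalues} and \eqref{X-cross-sym}) by an ideal tetrahedron $(p_1,p_2,p_3,p_4)$ together with the unit-modulus eigenvalues $u_1,u_2,u_3$. Imposing that each further word $AB^{-1}$, $AB^2$, $A^2B$, and $[A,B]$ be parabolic means requiring its trace to lie on the deltoid $\{f=0\}$ of Proposition \ref{deltofunc}. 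Each such condition is one real equation (the deltoid is a real curve in $\C$), so four extra conditions cut the $5$-dimensional family down to an expected dimension $1$.

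\textbf{The construction.} Concretely, I would start from the normalised parametrisation of triples of parabolics from \cite{ParkWi} recalled above, writing $A$ and $B$ as explicit matrices in the Siegel model with $A$, $B$, $AB$ unipotent (or more generally parabolic) by construction; this already forces three of the seven words into the parabolic locus for free. The remaining freedom is the ideal tetrahedron together with the phases $u_i$. I would then compute the traces $\tr(AB^{-1})$, $\tr(AB^2)$, $\tr(A^2B)$, and $\tr[A,B]$ as explicit functions of these parameters, using the trace equation of Proposition \ref{S and P exist} and the fact \eqref{symSU21} that $\tr \bA^{-1}=\overline{\tr \bA}$ to keep everything expressed in the five invariants $\Psi_{2,1}$. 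For the commutator in particular, note that once $A$, $B$, $AB$, $A^{-1}B$ have parabolic (deltoid) traces, $\tr[A,B]$ is pinned down up to the conjugation ambiguity by the trace equation, so I expect its parabolicity to be implied by, or compatible with, the other conditions rather than fully independent.

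\textbf{Solving the system.} The heart of the argument is to show the four deltoid equations $f(\tr w)=0$ (for $w\in\{AB^{-1},AB^2,A^2B,[A,B]\}$) have a one-parameter solution set inside the five-parameter family. I would try to choose coordinates adapted to the symmetry of the configuration — for instance imposing a symmetry exchanging $A$ and $B$, or choosing the eigenvalues $u_i$ real or equal — so that several of these trace conditions coincide or simplify drastically. With a well-chosen symmetric Ansatz, the system should collapse to essentially one genuine constraint relating the remaining free parameter to the geometric data of the tetrahedron, leaving a curve of solutions. I would then verify that along this curve the solutions are genuinely realised in SU(2,1) (not merely formal trace values) by checking the obstruction from the end of section \ref{sectionclasspairs}, namely that $S^2-4P\leq 0$ so the commutator trace is represented by an actual pair.

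\textbf{Main obstacle.} The principal difficulty I anticipate is controlling the four parabolicity conditions simultaneously: each is the vanishing of $f$ composed with a high-degree polynomial trace expression, so the system is genuinely nonlinear, and there is a real danger that the four deltoid conditions are mutually inconsistent (empty solution set) rather than cutting out a clean curve. The art lies in finding the right normalisation and symmetry reduction so that the conditions become compatible and visibly leave one free parameter; this is exactly where the explicit parabolic parametrisation of \cite{ParkWi} and a judicious choice of the phases $u_i$ should be leveraged. A secondary subtlety is handling the cube-root-of-unity ambiguity in passing between PU(2,1) and SU(2,1) lifts, which must be tracked carefully so that \emph{all} seven words land on the parabolic locus for the \emph{same} lift, and not merely up to a central factor that could push a trace off the deltoid.
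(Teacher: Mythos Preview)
Your strategy is essentially the one the paper reports from \cite{ParkWi}: start from the five-dimensional family of pairs with $A$, $B$, $AB$ parabolic and intersect it with the four real loci $\{f(\tr W)=0\}$ for $W\in\{AB^{-1},AB^2,A^2B,[A,B]\}$, obtaining an expected one-parameter family. The paper explicitly says that in \cite{ParkWi} the family \emph{is} obtained as this intersection, so your dimension count and outline match the actual approach.

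One caveat: your hope that a symmetry Ansatz will make several conditions coincide, or that the $[A,B]$ condition is implied by the others, does not seem to be borne out. The paper remarks that ``it seems difficult to produce a direct description of this family,'' which suggests that in \cite{ParkWi} the four loci are genuinely intersected (presumably with computer assistance) rather than collapsed by a clever normalisation, and that no simpler parametrisation of the resulting curve is known. So while your plan is correct in spirit, the execution is expected to be a computational verification that the intersection is nonempty and one-dimensional, not a clean algebraic reduction.
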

In \cite{ParkWi}, this family is obtained as the intersection of the loci where the four words $AB^{-1}$, $AB^{2}$, 
$A^{2}B$ and $[A,B]$ are parabolic, but it seems difficult to produce a direct description of this family. This makes 
difficult the study of the discreteness of this family. 
\section{The question of discreteness\label{section-discrete}}
\subsection{Sufficient conditions for discreteness}
 The following result is classical, and can be found \cite{CG}. Its consequence Theorem \ref{discret}  can be found in 
chapter 6 of \cite{Go}. 
\begin{theorem}\label{Zdense-dd}
 Let $\Gamma$ be a Zariski dense subgroup of PU($n$,1). Then $\Gamma$ is either dense or discrete. 
\end{theorem}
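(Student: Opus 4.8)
The plan is to compare the topological closure of $\Gamma$ with its Zariski closure through the Lie algebra of a connected component. Write $G=$ PU($n$,1), a connected simple Lie group, and let $H=\overline{\Gamma}$ be the closure of $\Gamma$ for the Hausdorff Lie group topology. By Cartan's closed subgroup theorem $H$ is an embedded Lie subgroup of $G$; let $H^0$ be its identity component and let $\mathfrak{h}\subseteq\mathfrak{g}=\mathfrak{su}(n,1)$ be the corresponding Lie subalgebra. The whole argument reduces to showing that $\mathfrak{h}$ can only be $\{0\}$ or $\mathfrak{g}$, these two cases corresponding exactly to $\Gamma$ being discrete and $\Gamma$ being dense.

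The key step is to promote the normality of $H^0$ inside $H$ to an $\mathrm{Ad}(G)$-invariance of $\mathfrak{h}$. Since $H^0$ is normal in $H$ and $\Gamma\subseteq H$, the subspace $\mathfrak{h}$ is invariant under $\mathrm{Ad}(\gamma)$ for every $\gamma\in\Gamma$. First I would observe that the stabiliser
$$\mathrm{Stab}(\mathfrak{h})=\{g\in G:\mathrm{Ad}(g)\mathfrak{h}=\mathfrak{h}\}$$
is a Zariski closed subgroup of $G$, being cut out by the polynomial conditions expressing that $\mathrm{Ad}(g)$ preserves $\mathfrak{h}$. It contains $\Gamma$, hence it contains the Zariski closure of $\Gamma$, which by hypothesis is all of $G$. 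Therefore $\mathfrak{h}$ is an ideal of $\mathfrak{g}$. Now $\mathfrak{su}(n,1)$ is a simple real Lie algebra for every $n\geqslant 1$, so it has no ideals other than $\{0\}$ and itself; consequently $\mathfrak{h}=\{0\}$ or $\mathfrak{h}=\mathfrak{g}$.

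It then remains to read off the two cases. If $\mathfrak{h}=\{0\}$, then $H^0$ is trivial, so $H$ is discrete; since $\Gamma$ is dense in $H$ by construction, this forces $\Gamma=H$, and $\Gamma$ is discrete. If $\mathfrak{h}=\mathfrak{g}$, then $H^0$ is a connected Lie subgroup whose Lie algebra is all of $\mathfrak{g}$, hence $H^0=G$ because $G$ is connected; then $H=G$ and $\Gamma$ is dense. I expect the main obstacle to be the transfer carried out in the middle paragraph: one must be careful that ``Zariski dense'' refers to density in the real algebraic group underlying PU($n$,1), and that invariance of a fixed subspace under $\mathrm{Ad}$ is genuinely an algebraic (Zariski closed) condition, so that Zariski density legitimately upgrades $\Gamma$-invariance to $G$-invariance. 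Once this is granted, simplicity of $\mathfrak{su}(n,1)$ does the rest, and the dichotomy between discrete and dense follows immediately.
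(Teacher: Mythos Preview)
Your proof is correct and is essentially the same as the paper's: both pass to the identity component of the topological closure of $\Gamma$, observe that its Lie algebra is $\mathrm{Ad}(\Gamma)$-invariant, use that this stabiliser is Zariski closed to upgrade to $\mathrm{Ad}(G)$-invariance, and then invoke simplicity to force the dichotomy. The only cosmetic difference is that you conclude via simplicity of the Lie algebra $\mathfrak{su}(n,1)$ whereas the paper phrases the last step using simplicity of the Lie group PU($n$,1).
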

A subgroup $\Gamma$ of PU($n$,1) is Zariski dense if and only if it acts on $\HnC$ with to stable 
proper totally geodesic subspace: this is Zariski density for the structure of real algebraic group 
of PU($n$,1). 
\begin{proof}
 Let $G=(\overline\Gamma)_0$ be the identity component of the closure
of $\Gamma$. Because $G$ is closed and connected, it is a Lie subgroup
of PU($n$,1). Let $\mathfrak{g}$ be its Lie algebra. Any
$\gamma\in\Gamma$ normalises $G$, and therefore
$Ad(\gamma)(\mathfrak{g})=\mathfrak{g}$. The latter condition on
$\gamma$ being algebraic, the normaliser $\mathcal{N}$ of
$\mathfrak{g}$ in PU($n$,1) is an algebraic subgroup of PU($n$,1) that
contains $\Gamma$. As $\Gamma$ is Zariski dense, we have
$\mathcal{N}=$PU($n$,1). Therefore $G$ is normal in PU($n$,1), and
because PU($n$,1) is simple $G$ is either trivial or equal to
PU($n$,1). In the first case, $\Gamma$ is discrete, and in the second
one it is dense.
\end{proof}
\n This result provides a sufficient condition for discreteness  when combined with the following lemma.
\begin{lemma}\label{openelli}
The set of regular elliptic elements of PU($n$,1) is open. In
particular, the set of elliptic elements of PU($n$,1) contains an open
set.
\end{lemma}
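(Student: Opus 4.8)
The plan is to characterise regular elliptic elements by a purely spectral, manifestly open condition, and then to run a continuity argument on eigenvalues. First I would reformulate the property: an element of $\mbox{PU}(n,1)$ is regular elliptic if and only if one (equivalently any) of its lifts $\mathbf{g}\in\mbox{SU}(n,1)$ has $n+1$ pairwise distinct eigenvalues, all of modulus $1$. One implication is essentially the definition; for the converse, suppose $\mathbf g$ has distinct unit-modulus eigenvalues. From $\la\mathbf g v,\mathbf g w\ra=\la v,w\ra$ one gets $(1-\lambda\overline\mu)\la v,w\ra=0$ for eigenvectors $v,w$ with eigenvalues $\lambda,\mu$; since $|\lambda|=|\mu|=1$, the relation $\lambda\overline\mu=1$ forces $\lambda=\mu$, so distinct eigenvalues have orthogonal eigenlines. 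These $n+1$ orthogonal lines span $\C^{n+1}$ and each carries a non-degenerate restriction of the Hermitian form, so by signature $(n,1)$ exactly one is of negative type. That line is a fixed point of $\mathbf g$ inside $\HnC$, hence $\mathbf g$ is elliptic, and it is regular because the eigenvalues are distinct.

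With this reformulation, openness splits into two continuity statements, which I would prove upstairs in $\mbox{SU}(n,1)$ and then push forward along the open covering $\mbox{SU}(n,1)\to\mbox{PU}(n,1)$ (all conditions being invariant under the central cube-root scaling of a lift). The set of matrices with $n+1$ distinct eigenvalues is open, since the discriminant of the characteristic polynomial is a polynomial in the entries and vanishes exactly when there is a repeated root; hence near a regular elliptic lift $\mathbf g_0$ every element is semisimple with distinct eigenvalues. The delicate point, which I expect to be the main obstacle, is to show that these eigenvalues cannot leave the unit circle under a small perturbation.

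Here I would use the spectral symmetry recalled earlier in the excerpt, namely that the spectrum of any element of $\mbox{SU}(n,1)$ is invariant under $z\longmapsto 1/\overline z$. Because the eigenvalues $\mu_1,\dots,\mu_{n+1}$ of $\mathbf g_0$ are simple, they can be followed continuously as $\lambda_1(\mathbf g),\dots,\lambda_{n+1}(\mathbf g)$ on a neighbourhood of $\mathbf g_0$, remaining pairwise distinct with a uniform gap. The symmetry yields, for each $\mathbf g$, a permutation $\sigma$ with $1/\overline{\lambda_j(\mathbf g)}=\lambda_{\sigma(j)}(\mathbf g)$; at $\mathbf g_0$ we have $1/\overline{\mu_j}=\mu_j$, so $\sigma=\mbox{id}$ there, and since the $\lambda_j(\mathbf g)$ stay separated the matching is locally constant, hence $\sigma=\mbox{id}$ throughout. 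Therefore $1/\overline{\lambda_j(\mathbf g)}=\lambda_j(\mathbf g)$, i.e. $|\lambda_j(\mathbf g)|=1$ for all $j$. Combined with the first paragraph, every element near $\mathbf g_0$ is regular elliptic, which proves openness. The content of this pairing step is exactly that an eigenvalue drifting off the circle would drag its partner $1/\overline\lambda$, a \emph{distinct} eigenvalue, onto the same simple eigenvalue of $\mathbf g_0$, which is impossible.

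Finally, the ``in particular'' clause is immediate: a diagonal unitary lift with $n+1$ distinct unit-modulus entries is regular elliptic, so the open set just produced is non-empty and is contained in the set of elliptic elements. I would also note that for $n=2$ one can bypass the eigenvalue bookkeeping entirely: by Proposition \ref{deltofunc} the function $f$ satisfies $f(\omega z)=f(z)$ for every cube root of unity $\omega$, so $f\circ\tr$ descends to a continuous function on $\mbox{PU}(2,1)$ and the regular elliptic locus is the open set $\{f\circ\tr<0\}$.
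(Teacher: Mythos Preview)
Your argument is correct, but you take a different route from the paper. The paper's proof is shorter and more geometric: it observes that near a regular elliptic lift $\mathbf{A}$ the eigenvalues stay simple (discriminant argument, same as yours), and then notes directly that the existence of a \emph{negative-type eigenvector} is an open condition, because when eigenvalues are simple the eigenlines vary continuously and the inequality $\la\bv,\bv\ra<0$ is open. A negative eigenvector gives a fixed point in $\HnC$, hence ellipticity, and distinct eigenvalues give regularity.

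You instead prove that the eigenvalues themselves stay on the unit circle, using the spectral involution $z\mapsto 1/\bar z$ and a local-constancy argument for the induced permutation; you then \emph{deduce} the negative eigenvector from the orthogonality of eigenlines under the Hermitian form and a signature count. This is perfectly valid and arguably more informative, since it identifies the precise spectral constraint that pins elliptic elements in place; it also yields your clean $n=2$ shortcut via $f\circ\tr$. The paper's argument buys brevity by bypassing the unit-modulus question entirely and going straight for the fixed point, at the cost of leaving implicit the (easy) continuity of eigenlines. Either approach is fine; yours is longer but self-contained, while the paper's is terse and relies on the reader supplying the eigenvector-continuity step.
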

\begin{proof}
Let $A$ be a regular elliptic element in PU($n$,1), and ${\bf A}$ of it to U($n$,1). 
There exists a neighbourhood $\mathcal{U}$ of ${\bf A}$ in U($n$,1) containing only 
matrices with pairwise distinct eigenvalues. Because ${\bf A}$ has a negative
eigenvector, there exists an open $V\subset U$ where any matrix has a
negative eigenvector. Projecting $V$ to PU($n$,1) gives the result.
\end{proof}
As a direct consequence, one obtains
\begin{theorem}\label{discret}
A Zariski dense subgroup $\Gamma$ of PU($n$,1) such that the identity
is not an accumulation point of elliptic elements of $\Gamma$ is
discrete. 
\end{theorem}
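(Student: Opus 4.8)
The plan is to deduce discreteness from the dichotomy of Theorem \ref{Zdense-dd} together with the hypothesis on elliptic elements. Since $\Gamma$ is assumed Zariski dense, Theorem \ref{Zdense-dd} tells us that $\Gamma$ is either dense in PU($n$,1) or discrete. As there is nothing to prove in the discrete case, the whole content of the argument is to rule out the possibility that $\Gamma$ is dense. I would argue by contradiction: assuming $\Gamma$ dense, I aim to produce a sequence of elliptic elements of $\Gamma$ converging to the identity, contradicting the hypothesis.

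To carry this out, I would first exhibit regular elliptic elements arbitrarily close to the identity. In the ball model, the diagonal elements $\mathrm{diag}(e^{i\alpha},e^{i\beta},e^{-i(\alpha+\beta)})$ with $\alpha,\beta$ small, distinct and nonzero are regular elliptic (their three eigenvalues are pairwise distinct, and the eigenvector associated to the last one is of negative type), and they tend to the identity as $\alpha,\beta\to 0$. Hence every neighbourhood $W$ of the identity contains such a regular elliptic element $E\neq Id$. By Lemma \ref{openelli}, the set of regular elliptic elements is open, so there is an open neighbourhood $V$ of $E$ consisting entirely of regular elliptic elements; shrinking $V$ if necessary we may assume $V\subset W$ and $Id\notin V$.

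Now density enters. Since $\Gamma$ is dense and $V$ is a non-empty open set, $\Gamma\cap V\neq\emptyset$, which furnishes an elliptic element of $\Gamma$ lying in $W\setminus\{Id\}$. As $W$ was an arbitrary neighbourhood of the identity, this shows that the identity is an accumulation point of elliptic elements of $\Gamma$, contradicting the hypothesis. Therefore $\Gamma$ cannot be dense, and the dichotomy of Theorem \ref{Zdense-dd} forces it to be discrete.

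The only genuinely substantive step is the first one: Lemma \ref{openelli} guarantees that the regular elliptic locus is open, but by itself it says nothing about where that locus sits relative to the identity. What makes the argument work is the explicit observation that the identity lies in the closure of the regular elliptic set --- equivalently, that arbitrarily small ``rotations'' are regular elliptic --- which is why the displayed diagonal family is the crux. Everything else is a formal combination of openness, density, and the structural dichotomy already established.
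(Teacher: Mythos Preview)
Your argument is correct and is precisely the intended one: the paper states Theorem \ref{discret} as a direct consequence of Theorem \ref{Zdense-dd} and Lemma \ref{openelli} without spelling out the details, and your contradiction argument (density forces elliptic elements of $\Gamma$ into every neighbourhood of the identity because the identity lies in the closure of the open regular elliptic locus) is exactly how those two ingredients combine.
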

In particular this implies that a Zariski dense subgroup $\Gamma$ with no elliptic 
elements is discrete. This result is not true in PSL(2,$\C$) (see for instance \cite{L.Green}). 
This difference is well illustrated by the comparison of the trace
functions in SU(2,1) and SL(2,$\C$). The image by the trace function
of the set of elliptic matrices in SL(2,$\C$) is the interval $(-2,2)$
which has empty interior. In contrast the set of regular elliptic
matrices in SU(2,1) projects by the trace onto the inside of the
deltoid curve depicted in figure \ref{delto}.

In fact if one thinks of $\Gamma$ as a subgroup of PGL(3,$\C$), being Zariski dense means having no proper complex totally 
geodesic subspace. This condition is not sufficient to give Theorem \ref{Zdense-dd}. Indeed, PU(3,1) contains PO(3,1) which 
is a copy of PSL(2,$\C$) and thus contains totally loxodromic non discrete subgroups in the stabiliser of a real plane.

In theory, Theorem \ref{discret} should allow proofs of the discrenetess of a subgroup of PU(2,1) that it contains no elliptic
elements, but to my knowledge it has never been done. Sandler derived in \cite{Sa} a beautiful combinatorial formula that 
expresses the trace of any element in the special case of ideal triangle group. This formula was generalised by 
Prattoussevitch \cite{Pra} to the case of any triangle group. However, these formulae are more useful as sufficient 
conditions for non-discreteness, as in \cite{Pra1}.
\subsection{Necessary conditions for discreteness. }
As any Lie group, PU(2,1) has Zassenhaus neighbourhoods, that is neighbourhoods $U$ of the identity element such that for 
any discrete group $\Gamma\subset$ PU(2,1), the group generated by $\Gamma\cap U$ is elementary (see section 4.12 of \cite{Kapo} 
or chapter 8 of \cite{Rag}). In the frame of PSL(2,$\C$), the J\o rgensen inequality gives a quantitative statement that has the 
same meaning. It is usually stated as follows (see \cite{Jorg} or section 5.4 of \cite{Bear}). For any matrices $A$ and $B$ in 
SL(2,$\C$) such that  the corresponding subgroup of  PSL(2,$\C$) is  discrete and non-elementary, it holds
\begin{equation}|\tr^2(A)-4|+|\tr [A,B]-2|\geq 1.\label{jorg}\end{equation}
 In the special case where $A$ is parabolic this result can
be given a slightly simpler statement (this is case 1 in the proof of
the J\o rgensen inequality given in \cite{Bear}) and is known as the
Shimizu lemma. Though the J\o rgensen inequality does not add much in theory, it can be useful in practice for instance when one consider a family 
of examples and one want to decide what are the discrete groups in it as it gives a very simple condition to decide when a group 
is non discrete.  In the frame of PU(2,1), this situation happens quite often (see for instance in \cite{DPP2}), and there have 
been many generalisations of the J\o rgensen inequality. However such a simple statement as \eqref{jorg} has not been given. The 
strength  of \eqref{jorg} is the fact that the condition is expressed in terms of traces of elements in the group, and the trace of 
element is an easy information to get.The J\o rgensen inequality can be stated in a different way. Denote by $[z_1,z_2,z_3,z_4]$ the 
usual cross-ratio in $\C P^1$. If $A\in$ SL(2,$\C$) either elliptic or loxodromic with fixed points $\alpha$ and $\beta$ in $\C P^1$, 
then if either
\begin{align}\label{jorg-cross}
&|\tr^2 A-4|\left([B(\alpha),\beta,\alpha,B(\beta)]+1\right)<1\nonumber\\
\mbox{ or }&\nonumber\\
&|\tr^2 A-4|\left([B(\alpha),\alpha,\beta,B(\beta)]+1\right)<1\nonumber,
\end{align}
the group generated by $A$ and $B$ is non-discrete or elementary. Jiang, Kamiya and Parker \cite{JKP} have generalised this 
cross-ratio version of Jorgensen's inequality under the following form,  which is to my knowledge the most accurate result to 
this day. The result they obtained holds for pairs $(A,B)$ where $A$ or $B$ is loxodromic or a complex reflection. A similar 
but slightly less accurate result had been obtained by Basmajian and Miner in the beautiful article \cite{BM}. Here is the main 
result of \cite{JKP} ($\X$ is the Koranyi-Reimann cross-ratio see section \ref{section-quadruples}).
\begin{theorem}\label{JKPJorg}
Let $A$ and $B$ be two elements of PU(2,1) with $A$ either loxodromic
or a complex reflection about a line. In both cases, let $p$ and $q$
be two distinct fixed points of $A$ on $\partial\HdC$.  Let $\lambda$
be the dilation factor of $A$ and $M$ be the quantity
$|\lambda-1|+|\lambda^{-1}-1|$. If one of the following condititions
is satisfied, then the group generated by $A$ and $B$ is either
elementary or non discrete.
\begin{enumerate}
\item $M\left(|\X(B(p),q,p,B(q))|^{1/2}+1\right)<1$
\item $M\left(|\X(B(p),p,q,B(q))|^{1/2}+1\right)<1$
\item $M<\sqrt{2}-1$ and
$$|\X(B(p),p,q,B(q))|+|\X(B(p),q,p,B(q))|<\dfrac{1-M+\sqrt{1-2M+M^2}}{M^2}$$
\item $M + |\X(p,q,B(p),B(q))|^{1/2}<1$
\end{enumerate}
\end{theorem}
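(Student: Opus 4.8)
The plan is to run a complex-hyperbolic version of the classical J\o rgensen--Shimizu iteration. I would argue by contradiction: suppose $\Gamma=\la A,B\ra$ is discrete and non-elementary and that one of the four inequalities holds, and aim to manufacture a sequence of group elements converging to $A$, which in a discrete group forces eventual stabilisation and ultimately shows that $B$ preserves the fixed-point set of $A$ --- contradicting non-elementarity. First I would normalise. Working in the Siegel model, I place the two boundary fixed points $p,q$ of $A$ at $q_\infty$ and at the origin of the Heisenberg group, so that a lift of $A$ is the diagonal matrix $\bA_\lambda$ of \eqref{class-conj-loxo} in the loxodromic case, and the analogous diagonal matrix with a repeated eigenvalue in the complex-reflection case. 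In these coordinates $A$ acts on $\partial\HdC$ as a Heisenberg dilation--rotation whose deviation from the identity is measured precisely by $M=|\lambda-1|+|\lambda^{-1}-1|$.

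Next I would set up the iteration $B_0=B$, $B_{n+1}=B_nAB_n^{-1}$. Each $B_n$ is conjugate to $A$, hence loxodromic (or a complex reflection), and its fixed-point set is $\mathrm{Fix}(B_{n+1})=B_n(\{p,q\})$. The technical heart is a self-improving estimate for the relevant Koranyi--Reimann cross-ratio: writing $x_n$ for whichever of the quantities $\X(B_n(p),q,p,B_n(q))$, $\X(B_n(p),p,q,B_n(q))$ or $\X(p,q,B_n(p),B_n(q))$ appears in the hypothesis, I would prove a recursion of the shape $|x_{n+1}|\leq \Theta(M,|x_n|)\,|x_n|$ in which the multiplier $\Theta$ is strictly less than $1$ exactly when the corresponding condition (1)--(4) is satisfied. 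The four cases correspond to the four natural orderings of the two fixed-point pairs (inner, outer, and the symmetric pairing), and the thresholds $|\X(\cdots)|^{1/2}$, the constant $\sqrt2-1$, and the term $\sqrt{1-2M+M^2}$ in the statement are precisely the values that guarantee $\Theta<1$.

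Granting the contraction, $|x_n|\to 0$, which forces $\mathrm{Fix}(B_n)=B_{n-1}(\{p,q\})$ to converge to $\{p,q\}$. Since all $B_n$ share the spectrum of $A$, convergence of their fixed points gives $B_n\to A$. As $A\in\Gamma$ and $\Gamma$ is discrete, this means $B_n=A$ for all large $n$. Now I trace the relation backwards: if $B_N=A$ then $B_{N-1}(\{p,q\})=\mathrm{Fix}(A)=\{p,q\}$, so $B_{N-1}$ preserves $\{p,q\}$; being loxodromic of infinite order it cannot permute a two-point set that is not its own fixed-point set, so $\mathrm{Fix}(B_{N-1})=\{p,q\}$, whence $B_{N-2}(\{p,q\})=\{p,q\}$, and so on down to $B_0=B$. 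Thus $B$ preserves $\{p,q\}$, so $\Gamma$ stabilises a pair of boundary points (equivalently a complex line or a geodesic), i.e.\ $\Gamma$ is elementary --- the desired contradiction.

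The main obstacle I anticipate is the contraction estimate itself. In $\mathrm{SL}(2,\C)$ the recursion reduces to an elementary computation on the off-diagonal matrix entries; here one must instead control the Koranyi--Reimann cross-ratio of a \emph{moving} configuration of four boundary points under conjugation by $A$, using Heisenberg geometry rather than a single complex coordinate. Converting each hypothesis into a clean bound $|x_{n+1}|\leq\Theta\,|x_n|$ with $\Theta<1$, and checking that the constants --- in particular the threshold in case (3) governed by $\sqrt{1-2M+M^2}$ --- are sharp, requires delicate estimates on how $A$ distorts cross-ratios near its fixed points; this is the computational core of \cite{JKP}.
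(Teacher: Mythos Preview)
The paper does not prove this theorem: it is quoted verbatim as the main result of Jiang--Kamiya--Parker \cite{JKP} and offered without proof, in a section surveying necessary conditions for discreteness. So there is no ``paper's own proof'' to compare against. That said, your sketch is an accurate summary of the method actually used in \cite{JKP} (and in the earlier Basmajian--Miner paper \cite{BM} the author also cites): normalise $A$ in the Siegel model, run the J\o rgensen iteration $B_{n+1}=B_nAB_n^{-1}$, prove a cross-ratio contraction that forces $B_n\to A$, invoke discreteness to get $B_N=A$, and unwind.

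One point in your endgame deserves care. Your backward step ``$B_{N-1}$ preserves $\{p,q\}$, hence $\{p,q\}=\mathrm{Fix}(B_{N-1})$'' is fine when $A$ is loxodromic, since a loxodromic element has exactly two boundary fixed points and cannot swap a two-point set. But when $A$ is a complex reflection about a line, its boundary fixed set is an entire circle (the boundary of the mirror), and the chosen $p,q$ are only two points on it. In that case $B_N=A$ gives $B_{N-1}AB_{N-1}^{-1}=A$, so $B_{N-1}$ preserves the mirror of $A$ as a set, not just $\{p,q\}$; the descent then shows $B$ preserves that complex line, and the group is elementary for that reason. This is the route taken in \cite{JKP}; your two-point phrasing does not quite cover it.
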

Other results in the same flavour can be found in  \cite{Kam1,Kam2,KPII,KPIV,KPT,Parkshimizu,ParkHeis,XJ,XJW}.
Applying Theorem \ref{JKPJorg} requires to know the fixed points of $A$ and $B$, whereas the classical inequality is purely in 
terms of traces. Finding the fixed points of an element is an elementary operation in itself, but it can become quite tricky when 
working with parameters. On the opposite, computing a trace is straightforward. To work more efficiently with families of examples, 
obtaining a polynomial J\o rgensen inequality expressed purely in terms of traces of words in $A$ and $B$ and without assumption on 
the conjugacy classes of $A$ and $B$ would be interesting.

One of the main applications that this generalisations of J\o rgensen's inequality have found is that of estimating the 
volume of complex hyperbolic manifolds, see \cite{HersPau} and \cite{ParkVol}. In particular, if $\Gamma$ is a discrete 
subgroup of PU($n$,1) containing a parabolic element $P$, it is possible to use Shimizu's lemma in complex hyperbolic space 
to produce subhorospherical regions that are invariant under the group generated by $P$. This leads to estimates on the volume 
of (finite volume) complex hyperbolic manifolds (see \cite{HersPau} and \cite{ParkVol}). See also \cite{KiKi} where the bounds on 
volumes of cusps obtained in \cite{HersPau,ParkVol} and \cite{ParkVol} are improved, and \cite{Hwang} where 
the author uses arguments from algebraic geometry. The specific case of $\HdC$ is also studied in \cite{KimJoon}.
For generalisations to the frame of quaternionic hyperbolic geometry, see \cite{KimDa,KimPark}.
\subsection{Building fundamental domains\label{fundadoms}}
To prove that a given subgroup of PU(2,1) is discrete the main method that has been used is to construct a fundamental 
domain or at least of domain of discontinuity. The first modern examples of discrete subgroups of PU(2,1) were given by Mostow in 
his famous \cite{Most}. There he was describing non-arithmetic lattices of PU(2,1), and constructed explicit fundamental domains 
for the action of these groups (see section \ref{section-higher-nonarithmetic}). Knowing a fundamental domain for a group $\Gamma$, 
one can obtain via Poincar\'e's polyhedron theorem (see \cite{FZ,Parbook}) a presentation for $\Gamma$. The main problem is to be 
able to construct such a domain. The famous Dirichlet procedure gives a way to construct such a domain, this is what Mostow did. 
The Dirichlet domain for a group $\Gamma$ centred at a point $x_0\in\HdC$ is defined as the region
\begin{equation}\label{diridom}
\mathcal{D}_{x_0,\Gamma}= \underset{\gamma\in\Gamma}{\bigcap}\left\{ z\in\HdC, d(x_0,z)<d(z,\gamma x_0)\right\}.
\end{equation}
The group $\Gamma$ acts properly discontinuously on $\HdC$ if and only if $\mathcal{D}_{x_0,\Gamma}$ is non-empty.
Clearly, hypersurfaces equidistant from two points play a crucial role in this construction. They are commonly called 
\textit{bisectors} in the field. In order to understand the combinatorics of the Dirichlet domain domain, it is necessary to 
describe the intersections of the various faces, and therefore one needs to understand the intersection of (at least) two given 
bisectors. As any real hypersurface in complex hyperbolic space a bisector are not totally geodesic, ant it separates $\HdC$ in two 
non-convex half spaces.  In particular the intersection of two bisectors can be quite complicated : 
it is sometimes not connected. However, Bisectors appearing in a Dirichlet construction have the additional property of 
being \textit{coequidistant} from the basepoint $x_0$ (see chapter 9 of \cite{Go})  . This simplifies the study of their intersections, 
as it implies that their pairwise intersections are connected (this is Theorem 9.2.6 in \cite{Go}). General bisector intersections 
are studied in \cite{Go}. The following proposition (which could be taken as a definition of bisectors) is often useful, as it 
provides more geometric information on bisectors. 
\begin{proposition}\label{spines}
Let $\mathcal{B}$ be a bisector. There exists a unique complex line $L$ and a unique geodesic $\sigma$ contained in $L$ 
such that 
\begin{equation}
\mathcal{B}=\Pi_L^{-1}\left(\sigma\right),
\end{equation}
where $\Pi_L$ is the orthogonal projection on $L$. The complex line $L$ and the geodesic $\sigma$ are respectivelly called 
the \textit{complex spine} and the \textit{real spine} of $\mathcal{B}$.
\end{proposition}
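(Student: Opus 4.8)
The plan is to work directly from the definition of a bisector as the equidistant hypersurface $\mathcal{B}=\{z\in\HdC : d(z,p_0)=d(z,p_1)\}$ attached to two distinct points $p_0,p_1$, and to take for the complex spine the complex line $L$ spanned by $p_0$ and $p_1$ (two distinct points of $\overline{\HdC}$ lie on a unique complex line). Fix a polar vector $\bn$ to $L$, so that $\la\bp_i,\bn\ra=0$ for $i=0,1$, and normalise the lifts so that $\la\bp_0,\bp_0\ra=\la\bp_1,\bp_1\ra=-1$. In the distance formula \eqref{distance} the factor $\la\bz,\bz\ra$ cancels from the equation $d(z,p_0)=d(z,p_1)$, which reduces to
\begin{equation}
|\la\bz,\bp_0\ra|=|\la\bz,\bp_1\ra|.
\end{equation}

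The key observation for existence is that this condition only sees the orthogonal projection onto $L$. Writing $\Pi_L(\bz)=\bz-\la\bz,\bn\ra\,\bn/\la\bn,\bn\ra$, the orthogonality $\la\bn,\bp_i\ra=0$ gives $\la\Pi_L(\bz),\bp_i\ra=\la\bz,\bp_i\ra$ for both $i$. Hence $z$ and $\Pi_L(z)$ satisfy the defining equation simultaneously, so $z\in\mathcal{B}$ if and only if $\Pi_L(z)\in\mathcal{B}\cap L$. Setting $\sigma=\mathcal{B}\cap L$ this reads $\mathcal{B}=\Pi_L^{-1}(\sigma)$. It remains to identify $\sigma$: the complex line $L$ is a totally geodesic copy of $\HuC$, isometric to the real hyperbolic plane, and inside it the set of points equidistant from $p_0$ and $p_1$ is the perpendicular bisector of the geodesic segment joining them, which is a geodesic. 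This settles existence.

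For uniqueness I would show that $L$ and $\sigma$ are recovered intrinsically from $\mathcal{B}$. The fibres $\Pi_L^{-1}(p)$ for $p\in\sigma$ are complex lines — the \emph{slices} — each meeting $L$ orthogonally at $p$, and distinct slices are disjoint since they are distinct fibres of the projection. The crucial structural input is that these slices are \emph{precisely} the complex lines contained in $\mathcal{B}$ (Chapter 9 of \cite{Go}), so the slice foliation does not depend on the chosen decomposition. Choosing two distinct slices $S_1,S_2$, they are disjoint complex lines admitting $L$ as a common orthogonal complex line; since the common perpendicular complex line to two such lines is unique, $L$ is determined by $\mathcal{B}$, and then $\sigma=\mathcal{B}\cap L$ is determined as well.

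The existence half is an essentially mechanical consequence of the cancellation in \eqref{distance} together with the invariance $\la\Pi_L(\bz),\bp_i\ra=\la\bz,\bp_i\ra$; the real work lies in the uniqueness half. The main obstacle is the structural fact that the slices are the \emph{only} complex lines lying in $\mathcal{B}$, which is what makes the slice foliation — and hence the complex spine, realised as the common perpendicular to two slices — canonically attached to $\mathcal{B}$. I would isolate this as the key lemma and appeal to it, together with the uniqueness of common perpendicular complex lines, rather than reprove either from scratch.
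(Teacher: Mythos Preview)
The paper does not actually prove this proposition. It is stated as a well-known background result (the text even remarks that it ``could be taken as a definition of bisectors'') and the reader is implicitly referred to Goldman's book \cite{Go}, where the slice decomposition of bisectors is developed in detail. So there is no paper proof to compare against.

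That said, your argument is essentially the standard one and is correct. The existence half is clean: normalising the lifts, the distance condition collapses to $|\la\bz,\bp_0\ra|=|\la\bz,\bp_1\ra|$, and the identity $\la\Pi_L(\bz),\bp_i\ra=\la\bz,\bp_i\ra$ (coming from $\la\bn,\bp_i\ra=0$) shows this depends only on $\Pi_L(z)$; the intersection $\mathcal{B}\cap L$ is then the perpendicular bisector of $p_0,p_1$ in the Poincar\'e disc $L$, hence a geodesic. For uniqueness you correctly isolate the key structural input --- that the slices are exactly the complex lines contained in $\mathcal{B}$ --- and cite it from \cite{Go} rather than reprove it, which is appropriate in this expository context.

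One small point worth making explicit: to invoke the uniqueness of the common perpendicular complex line you need the two chosen slices to be ultraparallel, not merely disjoint in $\HdC$. This holds because any two slices, viewed as projective lines, meet only at the point $[\bn]\in\C P^2$, which lies outside $\overline{\HdC}$ since $\bn$ has positive type; hence they share no point of $\overline{\HdC}$ and the common perpendicular complex line exists and is unique.
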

Because of Proposition \ref{spines}, Mostow refers to bisector as {\it spinal surfaces} in \cite{Most}. 
The following facts are consequence of Proposition \ref{spines}.
\begin{itemize}
\item A bisector admits a foliation by complex lines that are the fibers of $\Pi_L$ above $\sigma$. 
In other words, a bisector is a $\C$-sphere.
\item A bisector admits a (singular) foliation by the set of real planes that contain the real spine $\sigma$ 
(see chapter 5 of \cite{Go})
\end{itemize}
A great deal of information concerning bisectors, and their extensions to $\C P^2$ as extors is gathered in chapters 5, 8 and 
9 of \cite{Go}. The Ford domain, which is a variant of the Dirichlet domain where the center is a boundary point, has also been 
generalised to the frame of complex hyperbolic geometry. Bisectors appear there as faces just as in the Dirichlet domain. 
The interested reader will find examples of Dirichlet and Ford constructions in \cite{Der,FFP,FJP2,GP,Most,ParFord,Phillips}. 
However in most of the cases, the authors do not prove directly that the Dirichlet or Ford region is non empty. The method used is 
in fact more often the following. One starts with the conjecture that a given group is discrete, and then try to produce a candidate 
fundamental polyhedron as in \eqref{diridom}, but for  elements in a finite subset of the considered group (hopefully small).  
If this is achieved, then one try to prove that the obtained polyhedron is indeed fundamental for $\Gamma$ applying Poincar\'e's 
Polyhedron theorem. We refer the reader to \cite{Der} where a good discussion of this method can be found.
The fact that the construction of the candidate polyhedron fails can mean two things. Either the group is not discrete, or 
the choice of the basepoint is bad: it gives a Dirichlet domain with a very large number of faces (possibly even infinite 
see for instance \cite{GP2}).

Since Mostow's work, different techniques have been developed to produce fundamental domains. In particular different classes of 
hypersurfaces have been used to produce faces of polyhedron. One natural idea is to generalise bisectors by replacing them by 
hypersurfaces that are foliated by totally geodesic subspaces. This leads to the notion of {\it $\C$-surface} or {\it $\R$-surface}. 
These surfaces are typically diffeomorphic to $\R\times \HuC$ or $\R\times \HdR$. Examples of these were developed in 
\cite{FZ,FJP,FK,FK1,S,S1,S3,Schbook,Wi2,Wi7}.  The $\C$-surfaces are somewhat easier to handle than $\R$-spheres because of 
the duality between the Grasmanian of complex lines in $\HdC$ and $\C P^2\setminus \overline{\HdC}$ induced by the Hermitian form.
Any complex line $L$ in $\HdC$ is the projectivisation of a complex plane $P$ in $\C^3$, which is orthogonal for the Hermitian 
form to a linear subspace $\C v\subset \C^3$. The vector $v$ is called polar to $L$. As a consequence, any $\C$-surface corresponds 
to a curve in the outside of $\HdC$.
\begin{proposition}\label{dualcurve}
For any $\C$-surface $\Sigma$, there exists a curve $\gamma: \R\longrightarrow \C P^2\setminus \overline{\HdC}$ such that
$$\Sigma = \underset{t\in\R}{\bigcup}\gamma(t)^{\perp}$$
\end{proposition}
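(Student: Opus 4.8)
The plan is to read the curve $\gamma$ directly off the foliation of $\Sigma$ by complex lines, and then to identify each leaf with the complex line polar to a point of $\C P^2\setminus\overline{\HdC}$ via the duality recalled in Section \ref{section-total-geod}. By definition a $\C$-surface is a hypersurface of $\HdC$ admitting a foliation by complex lines. Since $\HdC$ has real dimension $4$, a $\C$-surface is a real hypersurface (dimension $3$) and each leaf is a complex line (dimension $2$); hence the leaf space is one-dimensional, and in accordance with the model $\R\times\HuC$ we may write $\Sigma=\bigcup_{t\in\R}L_t$, where $t\mapsto L_t$ is a one-parameter family of pairwise distinct complex lines (distinct because the leaves of a foliation are disjoint).

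First I would invoke the polarity correspondence. As recalled in Section \ref{section-total-geod}, every complex line $L$ of $\HdC$ is the intersection with $\HdC$ of the projectivised kernel of the form $z\mapsto\la z,\bn\ra$, where $\bn$ is a positive type vector, unique up to scaling; projectivising, this attaches to $L$ a single point $[\bn]\in\C P^2\setminus\overline{\HdC}$ (the exterior being exactly the image of the positive cone), and the resulting assignment is the bijection between complex lines and exterior points stated there. Applying this leaf by leaf, I define $\gamma(t)\in\C P^2\setminus\overline{\HdC}$ to be the polar point of $L_t$. By the very definition of polarity, the complex line polar to $\gamma(t)$ is $L_t$, that is $\gamma(t)^\perp=L_t$, whence
$$\Sigma=\bigcup_{t\in\R}L_t=\bigcup_{t\in\R}\gamma(t)^\perp,$$
which is the claimed identity.

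The only point requiring care is the regularity of $\gamma$, namely that $t\mapsto\gamma(t)$ is a genuine (continuous, indeed smooth) curve and not merely a set-theoretic assignment. This holds because the polar point depends algebraically on the leaf: locally a leaf $L_t$ is $P(W_t)\cap\HdC$ for a complex $2$-plane $W_t\subset\C^3$ of signature $(1,1)$, and $\gamma(t)=[W_t^{\perp}]$ is obtained by solving the linear system defining the $\la\cdot,\cdot\ra$-orthogonal complement of $W_t$; the solution depends smoothly on $W_t$, hence on $t$ through the smoothness of the foliation. I expect this regularity verification — together with the observation that distinct leaves yield distinct polar points (by bijectivity of the polarity), so that $\gamma$ parametrises a curve rather than a constant — to be the only genuine, if mild, obstacle. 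The substance of the statement is a direct translation of the definition of a $\C$-surface through the complex-line/exterior-point duality.
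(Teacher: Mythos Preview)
Your argument is correct and is exactly the approach the paper takes: the proposition is presented there as an immediate consequence of the polarity between complex lines of $\HdC$ and points of $\C P^2\setminus\overline{\HdC}$, with no formal proof beyond the sentence preceding it. If anything, you supply more detail than the paper, which does not discuss the regularity of $\gamma$ at all.
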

As seen above, bisectors are the first examples of $\C$-surfaces. In this case, the real spine $\sigma$ of the bisector can 
be extended as a circle in $\C P^2$ and the curve $\gamma$ is just the complement of $\sigma$ in this circle.
Some $\R$-surfaces analogous to bisectors have been constructed (see \cite{PP1,Wi7}, by taking the inverse image of a 
geodesic $\gamma$ under the orthogonal projection onto a real plane containing  $\gamma$. These are called flat packs 
\cite{PP1} or spinal $\R$-surfaces \cite{Wi7} (see also the survey article \cite{PP2}). More sophisticated constructions 
involving $\R$-spheres can be found in \cite{Schbook}. 

The typical situation when building a fundamental domains with $\C$-surfaces or $\R$-surfaces is the following. One want to control 
the relative position of two such surfaces $S$ and $S'$ obtained respectively as
$$S=\bigcup_{t\in\R} V_t \mbox{ and } S'=\bigcup_{t\in\R} V'_t,$$
where $(V_t))_t$ and $(V'_t)_t$ are the totally geodesic leaves of $S$ and $S'$ (either real planes or complex line).
One needs to prove that 
\begin{enumerate}
 \item For any $(t_1,t_2)$ in $\R\times\R$ the two leaves $V_{t_1}$ and $V_{t_2}$  and the two leaves 
$V'_{t_1}$ and $V'_{t_2}$ are disjoint.
 \item For any $(t_1,t_2)$ in $\R\times\R$ the two leaves $V_{t_1}$ and $V'_{t_2}$ are disjoint.
\end{enumerate}
The first condition guarantees that the $V_t$'s and $V'_t$'s indeed foliate $S$ and $S'$ respectively, 
and the second conditions ensures that $S$ and $S'$ are disjoint. A natural way of doing so is to use the 
symmetries carried by the considered leaves. Let us call $\sigma_t$ and $\sigma'_t$ the symmetries associated with 
$V_t$ and $V'_t$. They are antiholomorphic involution when $V_t$ and $V'_t$ are real planes, and complex reflections 
about lines when $V_t$ and $V'_t$ are complex lines. Showing that $V_{t_1}$ and $V'_{t_2}$ are disjoint amounts to proving 
that the composition $\sigma_{t_1}\circ \sigma'_{t_2}$ is loxodromic (see Proposition 3.1. of \cite{FZ}).
This can be done by showing that for any parameters  $t_1$ and $t_2$ the trace $\tr (\sigma_{t_1}\circ\sigma'_{t_2})$ 
remains outside the deltoid (see Proposition \ref{deltofunc} and figure \ref{delto}). This can be quite subtle, especially when 
$S$ and $S'$ must be tangent at infinity (this happens for instance when the group contains parabolic elements):  in that 
case, the two leaves that correspond to the tangency point give a trace which is on the deltoid curve. The computations involved 
are often easier when working with $\C$-surfaces than $\R$-surfaces. 
Indeed, the product of two complex reflections about complex lines always has real trace,  and therefore proving that $S$ and $S'$ 
are disjoint amout to minimizing a function $\R^2\longmapsto \R$. On the other hand, with real reflection, the trace of 
$\sigma_{t_1}\circ\sigma_{t_2}$ is a complex number, and one want to show that it is outside the deltoid curve. To transform this 
into a minimisation problem, one needs to apply first the polynomial $f$ defined in Proposition \ref{deltofunc}, which has degree 4, 
and makes the computation harder. Another very good example of how the situation can be complicated is Schwartz's construction of 
$\R$-spheres for the last ideal triangle group (see chapter 19 and 20 of \cite{Schbook}).


\section{Triangle groups\label{section-triangle-groups}}
Among the most accessible examples of groups acting on the complex hyperbolic plane are  \textit{complex hyperbolic triangle groups}. 
We will denote by $\Gamma(p,q,r)$ the group of isometries of the Poincar\'e disc generated by three symmetries 
$\sigma_1$, $\sigma_2$ and $\sigma_3$ about the sides of a triangle having angles $\pi/p$, $\pi/q$ and $\pi/r$, where $1/p+1/q+1/r<1$. 
In particular the elliptic elements $\sigma_1\sigma_2$, $\sigma_2\sigma_3$, and $\sigma_3\sigma_1$ have respective orders $2p$, $2q$ and 
$2r$. The subgroup of $\Gamma(p,q,r)$ containing holomorphic (or orientation preserving) isometries is generated by 
$\sigma_1\sigma_2$ and $\sigma_2\sigma_3$.
\subsection{Schwartz's conjectures on discreteness of triangle groups\label{section-Schwartz-triangle-groups}}
A complex hyperbolic $(p,q,r)$ triangle group is a representation of $\Gamma(p,q,r)$ to PU(2,1), 
such that $I_k=\rho(\sigma_k)$ is a complex reflection about a line, conjugate in ball coordinates to $(z_1,z_2)\longmapsto(-z_1,z_2)$. We will 
denote by $L_k$ the complex line fixed by $I_k$, and refer to it as its mirror. The angles between the mirrors are the same as the ones
for the corresponding geodesics in the Poincar\'e disc. One of the reasons for which triangle groups have 
been intensively studied is the fact that for given $(p,q,r)$ the moduli space of complex hyperbolic $(p,q,r)$ triangle groups is 
quite simple.
\begin{proposition}\label{prop-moduli-triangle-groups}
For any triple of integers $(p,q,r)$ such that $1/p+1/q+1/r<1$, there exists exactly one parameter family of $(p,q,r)$-triangle 
groups up to PU(2,1) conjugacy. 
\end{proposition}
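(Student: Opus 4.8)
The plan is to reduce the classification to a configuration problem for the three mirrors and then to exhibit a single surviving real invariant. Since each $I_k$ is a complex reflection of order two, it is completely determined by its mirror $L_k$: the eigenvalue $-1$ in the normal direction carries no modulus. Hence the $\Pu$-conjugacy class of the representation $\rho$ is exactly the $\Pu$-orbit of the ordered triple of complex lines $(L_1,L_2,L_3)$ with the prescribed pairwise angles. I would encode each mirror by a polar vector $\bn_k$ (positive type, unique up to scaling), normalized so that $\la\bn_k,\bn_k\ra=1$; this leaves each $\bn_k$ determined up to a unit-modulus phase. All the data is then recorded in the Hermitian Gram matrix $G=(\la\bn_i,\bn_j\ra)$, whose diagonal entries equal $1$.

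First I would record the angle constraints. Two mirrors meeting at angle $\theta$ satisfy $|\la\bn_i,\bn_j\ra|=\cos\theta$, so the three prescribed angles fix the three off-diagonal moduli to definite constants $a,b,c\in[0,1)$ depending only on $(p,q,r)$. What remains free are the three arguments of the off-diagonal entries. Rescaling $\bn_k\mapsto e^{i\phi_k}\bn_k$ changes the argument of $\la\bn_i,\bn_j\ra$ by $\phi_i-\phi_j$, while an overall common phase acts trivially; thus the three phases can be used to normalize two of the arguments, and the only combination invariant under all of them is the argument $t$ of the triple product $\la\bn_1,\bn_2\ra\la\bn_2,\bn_3\ra\la\bn_3,\bn_1\ra$, that is, the angular invariant of the triple $(\bn_1,\bn_2,\bn_3)$. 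Consequently $G$ is, up to the phase/scaling ambiguity, a function of the single real parameter $t$.

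Next I would invoke the fact that a Hermitian Gram matrix is a complete invariant: two linearly independent triples of vectors in $\C^{2,1}$ with equal Gram matrices are carried one to the other by an element of $\mathrm{U}(2,1)$ (Chapter 7 of \cite{Go}). Therefore two triangle groups with the same value of $t$ are $\Pu$-conjugate, and since $t=\arg\left(\la\bn_1,\bn_2\ra\la\bn_2,\bn_3\ra\la\bn_3,\bn_1\ra\right)$ is manifestly $\Pu$-invariant, distinct values of $t$ give non-conjugate groups. To determine which values of $t$ actually occur, I would compute
$$\det G = 1-(a^2+b^2+c^2)+2abc\cos t,$$
and observe that the ambient form has signature $(2,1)$ precisely when $\det G<0$ (the leading $2\times2$ minors being positive, since $a,b,c<1$). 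This inequality cuts out an interval of values of $t$, and on that interval the three polar vectors are linearly independent, span $\C^{2,1}$, and realize a configuration of three distinct complex lines. This interval is exactly the one-parameter family claimed.

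The main obstacle is the last, geometric part: one must check that every $t$ in the admissible interval really yields a \emph{faithful} representation of $\Gamma(p,q,r)$ by complex reflections about three distinct lines, rather than a degenerate configuration in which two mirrors coincide, the three lines share a common point, or the group becomes reducible, and that the reconstructed reflections have the prescribed pairwise angles on the nose. Verifying the signature computation together with the behaviour at the endpoints of the interval, where the configuration degenerates, is the delicate point. The naive dimension bookkeeping (three mirrors give $12$ real parameters, minus $\dim\Pu=8$, minus the three angle equations, leaving $1$) is only a heuristic and must be replaced by the explicit Gram-matrix argument above to be rigorous.
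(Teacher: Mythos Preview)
Your Gram-matrix argument is correct and is the standard route; the paper itself does not give a formal proof but simply identifies the surviving invariant. Note a small cosmetic difference: the paper takes as parameter the angular invariant of the triple of \emph{intersection points} $L_i\cap L_j$, whereas you use the angular invariant of the triple of \emph{polar vectors} $\bn_i$. These are dual descriptions of the same one-parameter family, so nothing is lost, but if you want to match the paper's normalisation you should say a word about why the two are equivalent (both are the unique phase invariant left after fixing the three moduli in the Gram matrix).

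Your final paragraph, however, introduces worries that are not relevant to the statement. The proposition does not ask for faithful representations, only for representations; so you do not need to rule out reducibility or coincidence of mirrors to complete the proof. What you \emph{do} need, and what is missing from your write-up, is the forward check that the configuration you build really gives a homomorphism of $\Gamma(p,q,r)$: that is, that the relations $(I_iI_j)^{n_{ij}}=1$ hold. This is immediate once you observe that the product of two order-two complex reflections about lines meeting at angle $\pi/n$ is a regular elliptic with spectrum $\{e^{2i\pi/n},e^{-2i\pi/n},1\}$ and hence has order exactly $n$; you should state this explicitly rather than leaving it implicit in the phrase ``the reconstructed reflections have the prescribed pairwise angles''. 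With that one line added, the proof is complete and you can drop the hedging about faithfulness, degeneracy and endpoints.
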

The parameter in question is in fact the angular invariant of the triangle formed by the intersections of the mirrors of $I_1$, $I_2$ 
and $I_3$. This parameter is often denoted by $t$ in this context, so that $t=\alpha(L_1\cap L_2,L_2,\cap L_3, L_3,\cap L_1)$ in our 
notation. In fact, the set of allowed values for $t$ is an interval which we will denote by $\mathcal{I}_{p,q,r}$. Conjugating by an 
antiholomorphic isometry amounts to changing $t$ to $-t$, so that $\mathcal{I}_{p,q,r}$ is symmetric about the point $t=0$, which 
corresponds to representations preserving a real plane which are easily seen to be discrete and faithful. The two endpoints of the 
interval $\mathcal{I}_{p,q,r}$ are not difficult to compute, but are not really relevent here (see for instance \cite{Pra} for 
examples).

The first examples of complex  triangle groups that have been studied are ideal triangle groups, that is when  $p=q=r=\infty$ in \cite{GP,S,S1,S3}. 
In this case the three products $I_kI_i{k+1}$ are parabolic. The result of this series of articles is the following, that had been 
conjectured by Goldman and Parker in \cite{GP}, and proved --twice-- by  Schwartz in \cite{S,S3}.
\begin{theorem}
An ideal triangle group is discrete and isomorphic to the free product of three copies of $\Z/2\Z$ if and only if the triple product 
$I_1I_2I_3$ is not elliptic.
\end{theorem}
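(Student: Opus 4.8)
The plan is to prove the two implications separately, the reverse implication (not elliptic $\Rightarrow$ discrete and free) being by far the harder one. Write $w=\sigma_1\sigma_2\sigma_3$ for the corresponding element of the abstract group $\Z/2\Z\ast\Z/2\Z\ast\Z/2\Z$, and $W=I_1I_2I_3=\rho(w)$. I first dispose of the necessary condition. In the free product $w$ is a reduced word whose cyclic reductions never cancel, so $w$ has infinite order. Hence if $\langle I_1,I_2,I_3\rangle$ were both discrete and isomorphic to the free product via $\rho$, then $W$ would be an infinite-order element of a discrete group. But an infinite-order elliptic isometry is conjugate to a diagonal matrix whose eigenvalue ratios are not all roots of unity, so its powers accumulate at the identity and the cyclic group it generates — and a fortiori the whole group — is non-discrete; while an elliptic of finite order would force $\rho(w^n)=\mathrm{Id}$ for some $n$ with $w^n\neq 1$, contradicting injectivity. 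In either case ``$W$ elliptic'' is incompatible with ``discrete and free'', which is the contrapositive of the necessary direction.

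For the sufficient direction, fix the one-parameter family of Proposition \ref{prop-moduli-triangle-groups}, parametrised by the angular invariant $t\in\mathcal{I}_{\infty,\infty,\infty}$, and recall that conjugation by an antiholomorphic isometry sends $t$ to $-t$. Using Proposition \ref{deltofunc}, the quantity $f(\tr W)$ is a real-analytic, even function of $t$; it is positive at $t=0$ (where the group stabilises a real plane and restricts there to the classical Fuchsian ideal triangle group, for which $W$ is loxodromic), and there is a critical value $t_c>0$ at which $f(\tr W)=0$, i.e. $W$ is parabolic. Thus $W$ fails to be elliptic exactly on the closed sub-interval $[-t_c,t_c]$, and it suffices to establish discreteness together with the presentation $\langle\sigma_1,\sigma_2,\sigma_3\mid\sigma_1^2=\sigma_2^2=\sigma_3^2=1\rangle$ for every such $t$.

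The strategy I would follow is the fundamental-domain method of Section \ref{fundadoms}. Since the $I_k$ are complex reflections about the mirrors $L_k$, the natural bounding hypersurfaces are $\C$-surfaces (bisectors), for which, as noted there, the disjointness of two leaves reduces to showing that the product of the two associated complex reflections is loxodromic — equivalently, by Proposition \ref{deltofunc}, that a certain real trace function stays outside the deltoid. Starting from the manifestly fundamental real-plane configuration at $t=0$, I would build a candidate polyhedron bounded by such surfaces, check the two leaf-disjointness conditions of Section \ref{fundadoms} uniformly for $|t|<t_c$, verify the face-pairing and cycle (ridge) conditions, and then invoke the Poincar\'e polyhedron theorem to conclude at once that the group is discrete and that the only relations are $\sigma_k^2=1$, giving the free product of three copies of $\Z/2\Z$.

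The hard part is exactly the behaviour near the endpoints $t=\pm t_c$. There $W$ is parabolic, so the bounding surfaces that meet at the cusp become tangent at infinity; the transversality and disjointness estimates that work in the interior degenerate precisely at the tangency, and the non-convexity of bisectors means their intersections need not be connected unless coequidistance is arranged (Proposition \ref{spines} and Section \ref{fundadoms}). Controlling the relative positions of all leaves uniformly up to the tangent limit, and certifying that the combinatorial type of the domain persists across the whole interval and degenerates only at $t_c$, is the technical heart of the argument. This is the difficulty that forced the elaborate constructions of \cite{S,S3}; Schwartz's second proof replaces bisectors by $\R$-spheres precisely to gain better control of this limiting geometry, and this is the step I would expect to be the genuine obstacle.
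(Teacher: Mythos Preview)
The paper does not prove this theorem: it is stated as the Goldman--Parker conjecture, proved (twice) by Schwartz, with citations to \cite{GP,S,S1,S3} and no argument given beyond the surrounding discussion. So there is no ``paper's own proof'' to compare against; both you and the paper ultimately defer to Schwartz.

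Your treatment of the necessary direction is correct and complete: $w=\sigma_1\sigma_2\sigma_3$ has infinite order in the free product, so an elliptic image forces either non-discreteness (infinite-order elliptic) or non-injectivity (finite-order elliptic). Note that this is weaker than what the paper records Schwartz actually proved --- namely that for $t\notin\mathcal I^0$ the group is \emph{never} discrete, not merely ``not both discrete and free'' --- but the weaker statement is exactly what the theorem as stated requires.

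For the sufficient direction you give an outline, not a proof, and you say so. Two comments. First, the $\C$-surface/bisector approach you sketch is essentially the Goldman--Parker construction in \cite{GP}, and that method is known to succeed only on a strict sub-interval of $[-t_c,t_c]$; it does \emph{not} reach the parabolic endpoints. You acknowledge this in your last paragraph, but your middle paragraph reads as though the bisector polyhedron would work for all $|t|<t_c$, which it does not. Second, even Schwartz's first proof \cite{S} is not a clean Poincar\'e-polyhedron argument; it relies on a computer-assisted analysis, and the cleaner second proof \cite{S3} abandons bisectors for $\R$-spheres. So the genuine content of the theorem lies entirely in the step you flag as ``the technical heart'' and do not carry out. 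As a proof this is therefore incomplete in the same way the paper's treatment is; as an outline of the landscape it is accurate.
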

In this case, the parameter $t$ is the Cartan invariant of the (parabolic) fixed points of the three words $I_kI_{k+1}$ and  
$\mathcal{I}=[-\pi/2,\pi/2]$ (see section \ref{section-triple-cross}). The subset of $\mathcal{I}$ where $I_1I_2I_3$ is non-elliptic 
is a closed subinterval $\mathcal{I}^0\subset\mathcal{I}$ which is symmetric about $0$. The endpoints of the interval 
$\mathcal{I}_0$ correspond to the so-called \textit{last ideal triangle group}. This group has very interesting properties, 
which we will discuss in section \ref{SCRWLC}. The striking fact here is that discreteness 
and faithfulness are governed by the conjugacy class of one element in the group. In fact, Schwartz proved that 
representations corresponding to point in $\mathcal{I}\setminus\mathcal{I}_0$ are never discrete.

A natural question is then to know how much of this behaviour remains true for other triangle groups. In his survey article \cite{S2}, 
Schwartz stated a series of conjecture predicting when these groups are discrete. To state these conjectures, we fix a labelling of 
the lines $L_1$, $L_2$ $L_3$ such that $p\leqslant q\leqslant r$ and denote by $W_A$ and $W_B$ the two words
\begin{equation}\label{eq-WA-WB}
W_A=I_1I_2I_3I_2\mbox{ and }W_B=I_1I_2I_3.
\end{equation}
Following Schwartz, we will say that a triple $(p,q,r)$ has type $A$ when $\rho_t(W_A)$ 
becomes elliptic before $\rho_t(W_B)$ as $t$ varies from $0$ to $\max(I_{p,q,r})$. We will say that it has type $B$ otherwise.
Fix the values of $p$, $q$ and $r$. Schwartz's conjectures are as follows. 

\textbf{Conjecture 1 }: The set of discrete and faithful representations $\rho_t$ of $\Gamma(p,q,r)$ consists of those values of $t$ for 
which neither $\rho_t(W_A)$ nore $\rho_t(W_B)$ is elliptic. These values form a closed subinterval 
$\mathcal{I}^0_{p,q,r}\subset \mathcal{I}_{p,q,r}$. In other words, the isometry type of $W_A$ (resp. $W_B$) controls discreteness and faithfulness for type $A$ triples 
(resp. trype $B$ triples). \\

\textbf{Conjecture 2 }: If  $p<10$ the $(p,q,r)$ has type $A$. If $p>13$, $(p,q,r)$ has type $B$.\\

\textbf{Conjecture 3 }: If $(p,q,r)$ has type $B$, then any discrete infinite representation is an embedding and correspond to a 
point in $\mathcal{I}_{p,q,r}^0$. If it has type $A$, the there exists a countable familly of non-faithful, discrete, infinite 
representations corresponding to values $(t_n)_{n\in\mathbb{N}}$ outside $\mathcal{I}^0_{p,q,r}$.\\

\textbf{Conjecture 4 }: As $t$ increases from $0$ to the boundary of $I^0_{p,q,t}$, the translation length of $\rho_t(W)$ decreases 
monotonically, where $W$ is any word of infinite order in $\Gamma_{p,q,r}$.\\

The behaviour predicted by conjecture 3 for triples of type $A$ has been indeed described in the case of $(4,4,\infty)$ and 
$(4,4,4)$-triangle groups (see \cite{S4,WyG}): there exists discrete representations of this group for which $W_A$ is (finite order) 
elliptic and $W_B$ is loxodromic. In the case of the $(4,4,4)$-triangle group, these representations correspond to values of the 
parameter $t_k$ of the parameter $t$ for which $\rho_{t_k}(W_A)$ has order $k$. For instance the value $t_5$ corresponds to a lattice 
that has been analysed by Deraux in \cite{Der2}.

A striking fact in these conjectures is that for each fixed triple $(p,q,r)$, the discreteness and faithfulness of 
a $(p,q,r)$-triangle group is controlled by the isometry type of a single element.  Triangle groups contain 2-generator subgroups of 
index two, that are generated by a $\C$-decomposable pair (see Remark \ref{RdecCdec} in Section \ref{subsection-double-root}). It is 
thus a natural question to try to generalize this to more general 2-generator subgroups of PU(2,1). A natural place to start would be 
to begin by fixing  a compatible choice of conjugacy classes for $A$, $B$ and $AB$, and examine the classes of triangle groups in 
the corresponding moduli space. 
\begin{itemize}
 \item The case where $A$, $B$ and $AB$ are parabolic generalises ideal triangle groups. Indeed, if $\Gamma=\la I_1,I_2,I_3\ra$ is an 
ideal triangle group, the products $A=I_1I_2$, $B=I_2I_3$ and $AB=I_1I_3$ all are parabolic (even unipotent).
In \cite{ParkWi}, a system of coordinates on the set of pairs $(A,B)$ surch that $A$, $B$ and $AB$ are parabolic is produced. In these 
coordinates, it is easy to spot families of discrete groups that are commensurable to those studied in \cite{FK2,GuP,GuP2} and a 
special case of \cite{Wi7}. All these example exhibit this kind of behaviour : discreteness is controlled by a single element of the 
group.
\item If one fixes three elliptic conjugacy classes $\mathcal{C}_1$, $\mathcal{C}_2$ and $\mathcal{C}_3$, it is not always true 
that there exists two elements of PU(2,1) such that $A\in \mathcal{C}_1$, $B\in \mathcal{C}_2$ and $AB\in \mathcal{C}_3$ (see section 
\ref{section-paupert}).  However, even when one knows that the choice of conjugacy classes is compatible, it is 
not at all trivial to produce an efficient parametrisation of the set of the corresponding pairs. 
\end{itemize}
The following question seems natural in this context. Let $F_2=\la a,b\ra$ be the free group of rank 2. Does there exists a finite list $(w_1,\cdots,w_k)$ such that any representation 
$\rho : F_2\longmapsto$PU(2,1) mapping all the $w_i$'s to non-elliptic isometries is discrete and faithful?

The Schwartz conjectures as well as the above question can all be stated in terms of traces of elements of the group. In \cite{Sa}, 
Sandler has derived a beautiful combinatorial formula to compute traces of words in an ideal triangle group, that has been generalized
by Prattoussevitch in \cite{Pra} to other triangle groups. It would be a tremendous progress in the field to have a sufficiently good 
understanding of the behaviour of the traces to be able to prove discreteness from this point of view. Quoting Schwartz in \cite{S2}: 
``I think that there is some fascinating algebra hiding behind the triangle groups -- in the form of the behavior of the trace 
function -- but so far it is unreachable''. Since then, progresses have been made on the understanding of traces, but nothing 
sufficiently accurate yet to attack these questions from this point of view. In particular, one knows from Theorem 
\ref{theo-FVLawton} that for any word $w\in F_2$, there exists a polynomial $P_w\in \mathbb{Z}[x,\overline x]$, where 
$x=(x_1,x_2,x_3,x_4,x_5)$ such that for any representation $\rho : F_2\longrightarrow SU(2,1)$ it holds
\begin{equation}\label{polytrace}
 \tr(\rho(w))=P_w(T,\overline{T})\mbox{, where }T=(\tr A,\tr B,\tr AB,\tr A^{-1}B,\tr [A,B]).
\end{equation}
Recall that the polynomial $P_w$ is only unique up to the ideal generated by  Relation \eqref{trace equation} in Proposition \ref{S and P exist}. Sandler's and 
Prattoussevitch's formulae appear thus as an explicit version of this polynomial in the special case of groups generated by $\C$-decomposable pairs. 
\subsection{Higher order triangle groups and the search for non-arithmetic lattices.\label{section-higher-nonarithmetic}}
A natural generalisation is to increase the order of the complex reflections, and consider groups generated by three 
higher order complex reflections. It turns out that such groups provide example of lattices in PU($n$,1).
Lattices in PU($n$,1) are far from being as undestood as in other symmetric spaces of non-compact type.  
In all symmetric spaces of rank at least 2, all irreducible lattices are arithmetic (\cite{Marg}) as well as in the rank 1 
symmetric spaces ${\bf H}^n_\mathbb{H}$ and ${\bf H}^2_\mathbb{H}$ ( \cite{Corl} and \cite{GroSch}). On the other hand, examples of 
non arithmetic lattices have been produced in ${\bf H}^n_{\R}$ for any $n\geqslant 2$ (\cite{GroPS}). In the case of complex 
hyperbolic space $\HnC$, only a finite number of examples in dimension $n=2$ are known (see \cite{Most,DM} and the more recent 
\cite{DPP2}), and one example in dimension $n=3$ (\cite{DM}).
We refer the reader to the survey article \cite{Parklattices} and the references therein for an account of what is known on the 
question of complex hyperbolic lattices. All examples known of non-arithmetic lattices in $\HdC$ are examples of groups of the 
following type.
\begin{definition}
A (higher order) symmetric triangle group is a group generated by three complex reflections $R_1$, $R_2$ and $R_3$ such that there 
exists an order three elliptic element $J$ which conjugate cyclically $R_i$ to $R_{i+1}$ (indices taken mod. 3).
\end{definition}
In particular being symmetric implies that the three complex reflections have the same order, which we will denote by $p$. The recent 
work that has been done on these groups find its root in Mostow's famous \cite{Most}. There, Mostow constructed 
the first examples of non-arithmetic lattices in $\HdC$, which are symmetric triangle groups with $p\in\{3,4,5\}$. Mostow's 
examples have been revisited by Deligne and Mostow in \cite{DM}, and the list of known non-arithmetic lattices in complex 
hyperbolic space extended. The question of knowing if there existed other examples of such non-arithmetic lattices remained 
open until very recently: in \cite{DPP,DPP2}, Deraux, Parker and Paupert have constructed new examples of non-arithmetic 
lattices in PU(2,1).

A symmetric triangle group is determined by a (symmetric) triple of complex lines which are the mirrors of the $R_i$'s and 
the integer $p$. This implies that for given $p$, the set of symmetric triangle groups has real dimension 2: 
\begin{itemize}
 \item the relative position of two complex lines is 
determined by one real number (which is their distance if the don't intersect and their angle if they do),
\item  once the pairwise relative position is known, the triple is determined by an angular invariant similar to the triple 
ratio of three points described in section \ref{invariants} (Mostow's \textit{phase shift}). 
\end{itemize}
The groups described by Mostow have the additional features that the two words $R_1R_2$ and $R_1R_2R_3=(R_1J)^3$ are finite order 
elliptic elements. It is thus very natural to explore systematically symmetric triangle groups having the property that $R_1J$ and 
$R_1R_2$ have this property. In \cite{DPP,DPP2}, the authors call these groups \textit{doubly elliptic} (in fact they allow $R_1R_2$ 
to be parabolic). Doubly elliptic symmetric triangle groups have been classified by Parker in \cite{P3} for $p=2$, and in 
\cite{ParkPau} by Parker and Paupert for $p\geqslant 2$. 

The main result of \cite{ParkPau} asserts that for each given value of $p$, a symmetric triangle group 
is either one of Mostow's groups or a subgroup of it, or belong to a finite list of groups called \textit{sporadic}. 
Sporadic groups appear thus as a natural place to look for new non-arithmetic lattices. The term \textit{sporadic} comes from the 
following fact. It is possible to translate the condition of double ellipticity into a trigonometric equation involving 
the eigenvalues of $R_1J$ and $R_1R_2$ (see section 3 of \cite{ParkPau}). The solutions of these equations form two continuous 
families and a finite set of 18 isolated solutions. These isolated solutions give the sporadic groups. One of the two continuous 
families leads to Mostow's examples, and the other one to subgroups of Mostow's groups. It should be noted that the resolution of 
these trigonometric equations is not trivial and makes use of a result of Conway and Jones on sums of cosines of rational multiples of $\pi$ 
(see Theorem 3.1 of \cite{P3}, and \cite{CJ}). This illustrates in particular the fact that finding an efficient parametrisation of a 
given family of groups is often difficult.

In \cite{DPP}, it is proved by use of Jorgensen type inequalities that at most finitely many sporadic groups are 
discrete, and conjectured that ten of these sporadic groups are non-arithmetic lattices, and among these ten, three are cocompact. 
In \cite{DPP2}, this result is proved for for five of the ten remaining groups. The crucial part of the work is the construction of a 
fundamental domain for the action of each of these groups on $\HdC$. 
\begin{remark}
\begin{enumerate}
  \item The ten sporadic groups are neither commensurable to one another, nor to any of Mostow's of Deligne and Mostow's groups.
The  invariant used to tell apart the commensurability classes  is the trace field of the adjoint representation of the considered 
groups (see section 8 of \cite{DPP2}).
 \item In \cite{Most} used Dirichlet's method to construct fundamental domains for the groups he studied. His construction has been 
revisited in \cite{Der}, where Deraux filled in gaps in Mostow's original proof. A different  and simpler construction of fundamental 
domains for Mostow's groups has been given in \cite{DFP}, where the dimension 3 faces of the polyhedron are not necessarily bisectors, 
but also cones over totally geodesic submanifolds. In \cite{DPP2}, Deraux, Parker and Paupert have proposed a new way of constructing
fundamental domains for higher order triangle groups. Regardless of the method used to construct a fundamental domain, the main 
difficulty is to analyse and check the combinatorics of the constructed polyhedron. The advantage of the method used in \cite{DPP2} 
is that it produces a polyhedron which is bounded by a finite number of pieces of bisectors. This is in contrast with Dirichlet's method where 
the number of faces could be infinite. Moreover, the fact that only bisectors are involved as 3-faces makes the  
description of their intersection simpler than if one uses ``exotic'' faces.
\item Using Poincar\'e's Polyhedron theorem, the above authors are able to provide a presentation by generators and relations for 
each of the lattices studied. They also compute the orbifold Euler characteristic of the corresponding quotients of $\HdC$.
\end{enumerate}
\end{remark}


\section{Around an example : representations of the modular group.\label{section-example-modular}}
In this section, we are going to illustrate the ideas we have exposed on an example: representations of the modular group
in PU(2,1). We are going to describe the irreducible, discrete and faithful representations of the modular 
group $\Gamma$=PSL(2,$\mathbb{Z}$) in PU(2,1).  These representations were studied around 2000 in the series of 
articles \cite{FK2,FJP,GuP,GuP2}. The modular group being a 2 generator group, we will begin by describing these representations
in terms of traces, using the results of section \ref{sectionclasspairs}. The modular group is generated by an involution and an 
order three elliptic element in $\PSL$ with parabolic product. We will use the following presentation for $\Gamma$ 
\begin{equation}\label{modulpresent}
\Gamma =\la {\tt e},{\tt p}\,\vert\,{\tt e}^{2}=({\tt ep})^3=1\ra.
\end{equation}
We denote ${\tt c}={\tt ep}$. We are going to describe representations of $\Gamma$ to PU(2,1) such that $\rho({\tt p})$ is 
parabolic.  To do so, we need first to specify the conjugacy classes we choose for $\rho({\tt e})$ and  
$\rho({\tt c})$. Indeed, an order two elliptic in PU(2,1) can be either a complex reflection about a point or about a complex line. 
Similarly, an elliptic of order three can be either regular elliptic, a complex reflection about a point or a complex reflection 
about a line. This leaves a priori six possibilities. However, it is an easy exercise to check that when $\rho({\tt c})$ is a complex 
reflection of either type, the representation is always reducible, and rigid (see \cite{FJP}). We will thus only consider the case 
when $\rho({\tt c})$ is regular elliptic. Reducible representations appear under this assumption too, but they are flexible. 
\subsection{Traces.\label{subsection-traces-modular}}
Here is the family of representation we are interested in.
\begin{definition}
Let $\mathcal{R}_\Gamma$ be the set of conjugacy classes of
irreducible representations of the modular group in PU(2,1) that map {\tt p} to a parabolic and {\tt c} to a regular elliptic.
\begin{equation}\mathcal{R}_\Gamma=\left\{\rho :\Gamma\longrightarrow {\rm PU(2,1)},\rho({\tt c})\, \mbox{is regular elliptic},\,
\rho({\tt p}) \mbox{ is parabolic}\right\}/{\mbox{PU(2,1)}}.
\end{equation}
\end{definition}
For any such representation $\rho$, we denote by $E$, $P$ and $C$ the
images  $\rho({\tt e})$, $\rho({\tt p})$ and $\rho({\tt c})$. Any such pair $(E,P)$ of isometries can be lifted to 
SU(2,1) into a pair $({\bf E},{\bf P})$ such that ${\bf E}^2=I_3$ and $({\bf EP})^3=I_3$. The starting point is the following lemma.
\begin{lemma}\label{order23}
\begin{enumerate}
\item Any element of order two in SU(2,1) has trace $-1$.
\item Any element of SU(2,1) of order 3 with pairwise distinct
  eigenvalues has trace equal to 0.
\end{enumerate}
\end{lemma}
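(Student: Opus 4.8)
The plan is to read off the eigenvalues from the finite-order hypothesis and then use the single extra constraint available in SU(2,1), namely $\det = 1$, to pin down the trace. Recall that for any $3\times 3$ matrix the trace is the sum of its eigenvalues and the determinant is their product, each counted with algebraic multiplicity; since every element of SU(2,1) lies in SL(3,$\C$), its three eigenvalues multiply to $1$.

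For part (1), let $M\in$ SU(2,1) satisfy $M^2=I_3$ with $M\neq I_3$. Every eigenvalue $\lambda$ of $M$ then satisfies $\lambda^2=1$, so the spectrum is contained in $\{1,-1\}$ (and $M$ is in fact diagonalizable, since $X^2-1$ has simple roots, so no Jordan-block subtlety arises). I would then list the multisets of eigenvalues drawn from $\{1,-1\}$ whose product is $1$: these are $\{1,1,1\}$ and $\{1,-1,-1\}$. The first is excluded because it forces $M=I_3$, contradicting order exactly two. Hence the spectrum is $\{1,-1,-1\}$ and $\tr M = 1-1-1 = -1$.

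For part (2), if $M$ has order three then each eigenvalue is a cube root of unity, i.e. lies in $\{1,\omega,\overline{\omega}\}$ with $\omega=e^{2i\pi/3}$. The hypothesis that the three eigenvalues are pairwise distinct leaves only one possibility, namely that they are exactly $1$, $\omega$ and $\overline{\omega}$; their sum is $1+\omega+\overline{\omega}=0$, so $\tr M=0$. As a consistency check, their product is $\omega\overline{\omega}=1$, matching $\det M=1$.

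There is no serious obstacle here: both statements are immediate once the eigenvalues are identified. The only two points deserving a line of care are the remark that the order-two condition $M^2=I_3$ makes $M$ diagonalizable, so that $\tr M$ genuinely equals the sum of the $\pm 1$ eigenvalues, and the use of $\det M=1$ to discard the a priori admissible eigenvalue patterns $\{1,1,-1\}$ and $\{-1,-1,-1\}$ in part (1). Notably, neither statement uses the signature of the Hermitian form beyond $\det=1$, so both would hold verbatim in SU(3) or SL(3,$\C$).
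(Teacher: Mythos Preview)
Your proof is correct. For part (2) you and the paper argue identically: the eigenvalues are the three distinct cube roots of unity, which sum to zero.

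For part (1) the routes diverge. The paper substitutes $\bA^2=I_3$ and $\tr\bA\in\R$ into the Cayley--Hamilton relation $\bA^3-\tr\bA\cdot\bA^2+\overline{\tr\bA}\cdot\bA-I_3=0$, takes the trace, and factors to obtain $(\tr\bA+1)(\tr\bA-3)=0$; it then rules out trace $3$ by noting that such an element of SU(2,1) is either the identity or parabolic, neither of which is an involution. Your argument is more direct and more elementary: you observe that $M$ is diagonalizable with eigenvalues in $\{\pm 1\}$ and use only $\det M=1$ to force the spectrum $\{1,-1,-1\}$. This avoids the Cayley--Hamilton computation and the appeal to the isometry-type classification (trace $3$ $\Rightarrow$ identity or parabolic), and, as you note, shows transparently that the statement holds in all of SL$(3,\C)$. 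The paper's approach, on the other hand, showcases a technique---reading off trace identities from Cayley--Hamilton---that is used repeatedly elsewhere in the article.
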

\begin{proof}
\begin{enumerate}
\item Let ${\bf A}$ be an element of SU(2,1). The Cayley-Hamilton relation for $\bA$ is
 \begin{equation}\label{CH}
\bA^3-\tr \bA \cdot \bA^2+\overline{\tr \bA}\cdot \bA-I_3=0
\end{equation}
When $\bA^2=I_3$, we have $\bA^{-1}=\bA$ and thus $\tr
\bA=\overline{\tr \bA^{-1}}$ is real. Plugging these facts in \eqref{CH},
we see that $\tr \bA$ satisfies $\left( \tr \bA+1\right)\left(\tr \bA
-3\right)=0$. An element of SU(2,1) with trace three is either the
identity or a parabolic element, which cannot be an
involution. Therefore $\tr \bA=-1$.
\item A regular elliptic isometry or order $3$ is lifted to SU(2,1) as a matrix with
three pairwise distinct eigenvalues, all of which are cube roots of unity, and therefore their sum is zero.
\end{enumerate}
\end{proof}

\begin{remark}\label{sym}
 Note that there are two conjugacy classes of involutions in SU(2,1),
 namely complex reflections in a line or in a point.  In ball
 coordinates, these two classes are represented by either the mapping
 $(z_1,z_2)\longrightarrow(z_1,-z_2)$ or
 $(z_1,z_2)\longrightarrow(-z_1,-z_2)$. The two classes of involutions
 can both be lifted to SU(2,1) under the form
\begin{equation}\label{liftinvol}
\bz\longmapsto -\bz+2\dfrac{\la\bz,\bv\ra}{\la\bv,\bv\ra}\bv
\end{equation}
where $\bv$ is a non-null vector in $\C^3$. When $\la \bv,\bv\ra>0$,
then $\bv$ is polar to a complex line $L_{\bv}$ which is pointwise
fixed by the transformation given by \eqref{liftinvol}, so that it is a reflection in a line. 
When $\la\bv,\bv\ra<0$,  $\bv$ is a lift to $\C^3$ of the unique fixed point in $\HdC$ of \eqref{liftinvol},
which is a complex reflection in a point. 
\end{remark}
As a consequence, when constructing a representation of $\Gamma$ into
PU(2,1), one must chose to map ${\tt e}$ to a reflection either in a
point or in a line. The first case has been studied in
\cite{FK2,GuP,GuP2} and the second in \cite{FJP}. We postpone the separation 
between these two cases to the next section.

 Let us denote by $u$ the (unit modulus) eigenvalue of ${\bf P}$
 associated with its fixed point on $\partial\HdC$. Multiplying by a
 central element of SU(2,1) if necessary, we may assume that
 $u=e^{i\alpha}$, with $\alpha\in(-\pi/3,\pi/3]$. As a consequence of Lemma \ref{order23}, we see that
$$\tr {\bf E}=-1,\,\tr {\bf P}=2u+u^{-2},\mbox{ and }\tr {\bf EP}=\tr {\bf E}^{-1}{\bf P}=0.$$
We can now plug these values in the polynomials $S$ and $P$ given by
\eqref{sumroots} and \eqref{prodroots}. Namely, we do the replacements

$$ x_1=x_5=-1,\, x_2=\overline{x_6}=2u+u^{-2},x_3=x_4=x_7=x_8=0.$$
We see thus that the coefficients of equation \eqref{trace equation} are given by
\begin{equation}
s  =  4u^{-3}(u^3+1)^2\mbox{ and }p  = s^2/4= 4u^{-6}(u^3+1)^4.
\end{equation}
As a direct consequence, \eqref{trace equation} has a real double root. After solving \eqref{trace
  equation}, this root is seen to be equal to $2\left(2+u^3+u^{-3}\right)$. We can thus write
$$\Phi({\bf E},{\bf P})=\left(-1,2u+u^{-2},0,0,2\left(
2+u^3+u^{-3}\right)\right),$$
were $\Phi$ is as in \eqref{mappu21}.
\begin{remark}
\begin{enumerate}
\item We have made the choice that $\alpha$, the argument of $u$, belong to $(-\pi/3,\pi/3]$. The two representations obtained for 
$\alpha=-\pi/3$  and $\alpha=\pi/3$ are not conjugate in SU(2,1), but correspond to the same group in PU(2,1). We can therefore 
identify the two endpoints of the interval and consider it a circle.
\item
The fact the commutator of ${\bf E}$ and ${\bf P}$ has real
trace is not a surprise. Indeed, $[{\bf E},{\bf P}]^{-1}$ is conjugate
to $[{\bf E},{\bf P}]$ by ${\bf E} $ because $\bE=\bE^{-1}$. Therefore $[{\bf E},{\bf
    P}]$ and its inverse have the same trace. But $\tr({\bf A}^{-1})=\overline{\tr {\bf A}}$ 
for any ${\bf A}\in$ SU(2,1).
\end{enumerate}
\end{remark}

\subsection{Cartan invariant and the parabolic eigenvalue.\label{subsection-Cartan-eigenvalue-modular}}
We are now going to illustrate the connection between projective invariant and eigenvalues that we have already exposed in section 
\ref{section-cross-ratio-eigenvalue}. To any representation $\rho$ in $\mathcal{R}_\Gamma$ is associated an ideal 
triangle $\Delta_\rho=(p_1,p_2,p_3)$ of which vertices are the parabolic fixed points given by
\begin{equation}
 p_1=\mbox{fix}(P),\, p_2=\mbox{fix}(CPC^{-1}) \mbox{ and } p_3=\mbox{fix}(C^{-1}PC).
\end{equation}
Let us fix a lift $\bp_1$ of $p_1$ and set $\bC \bp_1=\bp_2$ and $\bC^{-1} \bp_1=\bp_3$. The triple product of these three vectors
can be computed as follows.
\begin{align}\label{tripleproduct}
\la \bp_1,\bp_2\ra\la\bp_2,\bp_3\ra\la\bp_3,\bp_1\ra & = 
\la\bp_1,{\bf C}\bp_1\ra\la {\bf C}\bp_1,{\bf C}^2\bp_2\ra\la {\bf
  C}^2\bp_2,{\bf C}^3\bp_1\ra\nonumber\\ & = \la\bp_1, {\bf
  C}\bp_1\ra^3= \la\bp_1,{\bf EP}\bp_1\ra^3\nonumber\\ & =  \bar u
^3\la \bp_1,{\bf E}\bp_1\ra^3.
\end{align}
Now there exists a vector $\bv$ such that ${\bf E}$ is given by
\eqref{liftinvol}. Using the fact that $\bp_1$is a null vector, we obtain then
\begin{equation}\label{bracket}
\la \bp_1,{\bf E}\bp_1\ra = 2\dfrac{|\la \bp_1,\bv\ra|^2}{\la\bv,\bv\ra}.
\end{equation}
In turn, we see that the triple product is given by 
\begin{equation}\label{tripleproduct2}
\la \bp_1,\bp_2\ra\la\bp_2,\bp_3\ra\la\bp_3,\bp_1\ra  =  8\bar u^3\dfrac{|\la\bp_1,\bv\ra|^6}{\la \bv,\bv\ra^3}.
\end{equation}
As a consequence of this computation we can express the Cartan invariant of $\Delta_\rho$ in terms of the eigenvalue of 
$\rho({\tt p})$.
\begin{lemma}\label{lem-Cartan-reflection}
 Let $\rho$ be a representation of the modular group in PU(2,1) such
that the eigenvalue of $\rho({\tt p})$ associated with its fixed point
is $u$, with $|u|=1$. 
\begin{enumerate}
\item If $\rho({\tt e})$ is a complex reflection about a point, then
$\A(\Delta_{\rho})=\arg(\overline{u}^3)$.
\item If $\rho({\tt e})$ is a complex reflection about a line, then
$\A(\Delta_{\rho})=\arg(-\overline{u}^3)$.
\end{enumerate}
\end{lemma}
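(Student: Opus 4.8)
The plan is to read the Cartan invariant straight off the triple product computed in \eqref{tripleproduct2}, so that the only genuine work is tracking the sign of the real prefactor and matching it to the two conjugacy classes of involutions. By the definition \eqref{angular} of the angular invariant specialised to an ideal triangle, I would start from
$$\A(\Delta_\rho)=\arg\left(-\la\bp_1,\bp_2\ra\la\bp_2,\bp_3\ra\la\bp_3,\bp_1\ra\right),$$
and then substitute the value of the triple product established in \eqref{tripleproduct2} to obtain
$$\A(\Delta_\rho)=\arg\left(-8\bar u^3\,\dfrac{|\la\bp_1,\bv\ra|^6}{\la\bv,\bv\ra^3}\right).$$
The point is that $8|\la\bp_1,\bv\ra|^6$ is a nonnegative real number and $\la\bv,\bv\ra^3$ is real, so the prefactor $r:=8|\la\bp_1,\bv\ra|^6/\la\bv,\bv\ra^3$ is real and its sign is exactly the sign of $\la\bv,\bv\ra$.

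Before the sign analysis I would record that $r\neq 0$, so that the argument is well defined. Since $p_1$, $p_2$, $p_3$ are three distinct points of $\partial\HdC$, their pairwise Hermitian products are nonzero (two distinct null lines are never orthogonal for a form of signature $(2,1)$), hence the triple product is nonzero; as $|\bar u^3|=1$ this forces $\la\bp_1,\bv\ra\neq 0$ and $r\neq 0$.

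The remaining step is the sign bookkeeping, split according to Remark \ref{sym}. If $\rho({\tt e})$ is a complex reflection about a point, then $\la\bv,\bv\ra<0$, so $r<0$ and $-r>0$; thus $-\bar u^3 r=(-r)\bar u^3$ is a positive multiple of $\bar u^3$, and $\A(\Delta_\rho)=\arg(\bar u^3)$, which is assertion (1). If instead $\rho({\tt e})$ is a complex reflection about a line, then $\la\bv,\bv\ra>0$, so $r>0$ and $-\bar u^3 r$ is a negative multiple of $\bar u^3$, equivalently a positive multiple of $-\bar u^3$, whence $\A(\Delta_\rho)=\arg(-\bar u^3)$, which is assertion (2).

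I do not expect any real obstacle here: the substantive input is the already-established identity \eqref{tripleproduct2}, and the only thing requiring care is the appearance of the extra factor $e^{i\pi}$ in exactly one of the two cases. What makes the statement clean is precisely the dictionary of Remark \ref{sym} translating the sign of $\la\bv,\bv\ra$ into the reflection type, together with the fact that all the remaining data in the triple product (the factor $8$, the sixth power $|\la\bp_1,\bv\ra|^6$, and the modulus-one scalar $\bar u^3$) contribute no ambiguity to the argument beyond that single sign.
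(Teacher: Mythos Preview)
Your argument is correct and follows exactly the route the paper takes: the paper's proof is a one-sentence appeal to the definition of the Cartan invariant together with the sign of $\la\bv,\bv\ra$ from Remark \ref{sym}, and you have simply unpacked that sentence with the explicit sign bookkeeping. Your added remark that $r\neq 0$ is a small improvement in rigour, but otherwise there is no difference in approach.
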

\begin{proof}
It is a direct consequence of the definition of the Cartan invariant and of the fact that $\la\bv,\bv\ra$ is positive if 
and only if $\rho({\tt e})$ is a complex reflection about a line, and negative if and only if it is a reflection about a point. 
\end{proof}
Lemma \ref{lem-Cartan-reflection} should be compared to Theorem 7.1.3 of \cite{Go} which states a similar connection between the 
Cartan invariant and the eigenvalue of a product of three antiholomorphic isometric. This leads to the following
\begin{proposition}\label{prop-Cartan-eigenvalue}
Let $\rho$ be a representation of the modular group in PU(2,1) such
that the eigenvalue of $\rho({\tt p})$ associated with its fixed point
is $u=e^{i\alpha}$, with $\alpha\in(-\pi/3,\pi/3]$. Then
\begin{enumerate}
 \item If $|\alpha|<\pi/6$ then $\rho({\tt e})$  is a complex reflection about a point.
 \item If $|\alpha|>\pi/6$ then $\rho({\tt e})$ is a complex reflection about a line.
 \item If $|\alpha|=\pi/6$, both cases happen.
\end{enumerate}
\end{proposition}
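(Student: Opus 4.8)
The plan is to pit the explicit value of the Cartan invariant supplied by Lemma~\ref{lem-Cartan-reflection} against the universal bound $\A(\tau)\in[-\pi/2,\pi/2]$ of Proposition~\ref{prop-prop-Cartan}. First I would check that $\Delta_\rho=(p_1,p_2,p_3)$ is a genuine ideal triangle. Since $\rho$ is irreducible, the three parabolic fixed points $\mbox{fix}(P)$, $\mbox{fix}(CPC^{-1})$ and $\mbox{fix}(C^{-1}PC)$ must be pairwise distinct: writing $p_2=Cp_1$ and $p_3=C^{-1}p_1$ and using $C^3=1$, any coincidence among them forces $Cp_1=p_1$, hence a common fixed point of $C$ and $P$ and a reducible pair. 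By Remark~\ref{sym} the involution $E=\rho({\tt e})$ is a complex reflection either about a point or about a line, and these are the only two possibilities; the whole point is to decide which one occurs for a given $\alpha$.

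Next I would record the two candidate values of the Cartan invariant. By Lemma~\ref{lem-Cartan-reflection} one has $\A(\Delta_\rho)=\arg(\overline u^3)$ if $E$ is a reflection about a point, and $\A(\Delta_\rho)=\arg(-\overline u^3)$ if $E$ is a reflection about a line. Writing $u=e^{i\alpha}$ gives $\overline u^3=e^{-3i\alpha}$, whose real part is $\cos(3\alpha)$. The elementary observation that drives everything is that a nonzero complex number $z$ satisfies $\arg z\in[-\pi/2,\pi/2]$ if and only if $\Re(z)\geq 0$. Hence, by the bound of Proposition~\ref{prop-prop-Cartan}, the point case is compatible with $\rho$ only when $\cos(3\alpha)\geq0$, while the line case is compatible only when $-\cos(3\alpha)\geq0$, that is $\cos(3\alpha)\leq0$.

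It then remains to read off the sign of $\cos(3\alpha)$ on the range $\alpha\in(-\pi/3,\pi/3]$: one has $\cos(3\alpha)>0$ exactly when $|\alpha|<\pi/6$, $\cos(3\alpha)<0$ exactly when $|\alpha|>\pi/6$, and $\cos(3\alpha)=0$ exactly when $|\alpha|=\pi/6$. If $|\alpha|<\pi/6$, the line case would give $\A(\Delta_\rho)=\arg(-\overline u^3)$ with negative real part, hence a value outside $[-\pi/2,\pi/2]$, contradicting Proposition~\ref{prop-prop-Cartan}; so $E$ must be a reflection about a point. Symmetrically, if $|\alpha|>\pi/6$ the point case is excluded and $E$ is a reflection about a line. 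When $|\alpha|=\pi/6$ the two candidate values are $+\pi/2$ and $-\pi/2$, both at the extremes allowed by the bound, so neither type is excluded.

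The forward implications for $|\alpha|\neq\pi/6$ are thus forced purely by the inequality, and I expect the only delicate point to be the boundary case $|\alpha|=\pi/6$. There one must go beyond mere compatibility and actually exhibit representations of each type, which is an existence statement rather than an exclusion; concretely this is where the reflection-about-a-point family of \cite{FK2,GuP,GuP2} and the reflection-about-a-line family of \cite{FJP} each meet the value $|\alpha|=\pi/6$. I would also keep in mind that by Proposition~\ref{prop-prop-Cartan}(4) the extremal case $\A=\pm\pi/2$ forces $\Delta_\rho$ into a complex line, so a brief check is warranted that these limiting representations are genuine irreducible members of $\mathcal{R}_\Gamma$ rather than degenerate ones.
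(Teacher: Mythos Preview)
Your proof is correct and follows essentially the same route as the paper: invoke Lemma~\ref{lem-Cartan-reflection} to obtain the two candidate values of $\A(\Delta_\rho)$ and then confront them with the bound $\A\in[-\pi/2,\pi/2]$ from Proposition~\ref{prop-prop-Cartan}. Your version is more careful in places---you check non-degeneracy of $\Delta_\rho$, phrase the exclusion via the sign of $\cos(3\alpha)$ rather than tracking arguments mod $2\pi$, and flag that the boundary case $|\alpha|=\pi/6$ requires an existence argument rather than mere compatibility---but the core idea is identical.
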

\begin{proof}
In view of Lemma \ref{lem-Cartan-reflection}, we see that $\A(\Delta_\rho)=-3\alpha$ when $\rho({\tt e})$ is a reflection 
about a point, and $\A(\Delta_\rho)=\pi-3\alpha$ when it is a reflection about a line. Taking in account the fact that the 
Cartan invariant belongs to $[-\pi/2,\pi/2]$ mod. $2\pi$ gives the result.
\end{proof}
\begin{remark}\label{nonconjugate-reducible-pairs-same-phi}
 It is interesting to note that the two pairs $(\bE,\bP)$ corresponding to the value $\alpha=\pi/6$ have the same image under 
the trace map $\Phi$ (see \eqref{mappu21}), though they are not conjugate. This is due to the fact that these pairs are 
reducibles. 
\end{remark}
\subsection{Geometric description of the representations\label{subsection-geometry-modular}}
\subsubsection{Construction of the representations from an ideal triangle}
\begin{figure}
\begin{center}
\scalebox{0.5}{\includegraphics{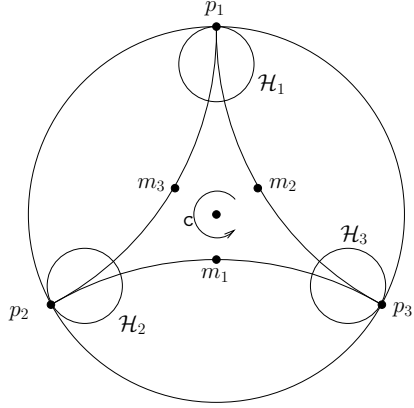}}
\end{center}
\caption{Fixed points and horosphere for the modular group in the Poincar\'e disc.\label{fix-horo}}
\end{figure}
As we have seen in the previous section,a representation of the modular group is determined by the ideal triangle formed by the 
parabolic fixed points of $\rho({\tt p})$ and its conjugate under $\rho({\tt c})$. This ideal triangle allows us to give a geometric 
description of the representations which is similar to the classical one in the Poincar\'e disc. For this, we will need the following 
elementary geometric facts.
\begin{enumerate}
\item[] \textbf{Fact 1}:  To any ideal triangle $(p_1,p_2,p_3)$ not contained in a complex line is associated a unique 
order 3 regular elliptic map mapping $p_i$ to $p_{i+1}$ (indices taken mod 3), see for instance Corollary 7.1.8. of \cite{Go}.
\item[] \textbf{Fact 2}: For any pair of distinct points $p$ and $q$ in $\partial\HdC$, and any pair of horospheres $\mathcal{H}_p$ 
and $\mathcal{H}_q$ based at $p$ and $q$ respectively, there exists a unique complex reflection about a point and a unique complex 
reflection about a line exchanging $\mathcal{H}_p$ and $\mathcal{H}_q$.
\end{enumerate}
Now, let us consider an ideal triangle $(p_1,p_2,p_3)$, not contained in a complex line. Denote by $C$ the unique order three regular 
elliptic isometry such that $C(p_1)=p_2$ and $C(p_2)=p_3$, and fix a horosphere $\mathcal{H}_1$ based at $p_1$. Denote by 
$\mathcal{H}_2$ and $\mathcal{H}_3$  and  the respective images of $\mathcal{H}_1)$ by $C$ and $C^{-1}$. With this notation, 
$\mathcal{H}_i$ is based at $p_i$. Applying Fact 2 to the three pairs of horospheres $(\mathcal{H}_i,\mathcal{H}_{i+1})$, we obtain 
a triple of complex reflections about points $(E_1,E_2,E_3)$ and a triple of complex reflections about lines $(E'_1,E'_2,E'_3)$ such 
that $E_i$ and $E'_i$ both exchange $\mathcal{H}_{i+1}$ and $\mathcal{H}_{i+2}$. These two triples of involutions do not depend on 
the choice of the first horosphere, and are determined by the fact that the configaration of horospheres has a symmetry of order 3.

It is easy to check that the two isometries $P=E_3C$ and $P'=E'_3C$ have the following properties.
\begin{enumerate}
 \item $P$ and $P'$ both fix $p_1$.
 \item $P$ and $P'$ both preserve the horosphere $\mathcal{H}_1$
\end{enumerate}
The second condition implies that $P$ and $P'$ are either parabolic or a complex reflection about a line with mirror 
containing $p_1$. This second possibility can be ruled out using the same kind of arguments as in \cite{ParkWi}.
As a consequence, we see that the two pairs $(E_3,P)$ and $(E'_3,P')$ provide two representations of the modular group to PU(2,1),
by setting $\rho_1({\tt e})=E_3$ and $\rho_1({\tt p})=P$ or $\rho_2({\tt e})=E'_3$ and $\rho_2({\tt p})=P'$.
In view of the previous two sections, the representations obtained in this way are the only ones. The choice of the isometry type 
of the involution and of the Cartan invariant of the triangle $(p_1,p_2,p_3)$ determines the conjugacy class of the parabolic element 
$\rho({\tt p})$. 
\subsubsection{$\R$-decomposability}
The trace computations done in section \ref{subsection-traces-modular} have show that the commutator of $E$ and $P$ satisfies
$$\tr[E,P]=2\left(2+u^3+\bar u^3\right).$$
When $\alpha=\pi/6$ and thus $u^3=i$, we see that $[E,P]$ is loxodromic with trace equal to 4. But the spectrum of a loxodromic element 
with real trace is $\lbrace r,1,1/r\rbrace$ or $\{-r,1,-1/r\}$ for some positive real number $r$, where the eigenvalues of non-unit 
modulus correspond to boundary fixed points. This implies that a trace 4 loxodromic element in SU(2,1) must have a real and 
positive eigenvalue associated with its boundary fixed points. This property is conserved by this specific deformation is 
deformed as $[E,P]$ cannot have 0 as an eigenvalue.

Using the main result of \cite{PaW} (see section \ref{subsection-double-root} above), this implies that for any value 
of $u$, the pair $(E,P)$ is $\R$-decomposable: there exists $(\sigma_1,\sigma_2,\sigma_3)$ a triple of real reflections
such that $E=\sigma_1\sigma_2$ and $P=\sigma_2\sigma_3$. This implies that $C=\sigma_1\sigma_3$. A simple way of identifying these 
real reflections is given by the fact that to any ideal triangle $(m_1,m_2,m_3)$ is associated a triple of real symmetries
$(s_1,s_2,s_3)$ such that $s_i$ fixes $mi_i$ and exchanges $m_{i+1}$ and $m_{i+2}$. The product $s_is_{i+1}$ is then elliptic of order three 
(see chapter 7 of \cite{Go}). These gives us $\sigma_1$ and $\sigma_3$. The two other conditions determine 
$\sigma_2$ uniquely. The existence of this decomposition is an important tool in \cite{FJP}.
\subsection{Finding explicit matrices}
We now provide explicit matrices for these representations of the modular group. We identify here isometries 
and their lifts to SU(2,1).
\begin{proposition}
Any irreducible representation $\rho:$PSL(2,$\mathbb{Z}$)$\longrightarrow$PU(2,1) is conjugate to one
given by the following two cases.
\begin{itemize}
\item $E$ is a reflection in a point.
\begin{equation}
 \rho(\tt e)=\begin{bmatrix}0 & 0 & -1\\0 & -1 & 0\\-1 & 0 & 0\end{bmatrix}
\mbox{ and }
\rho(\tt p)=\begin{bmatrix}e^{i\alpha} & \sqrt{2\cos{3\alpha}} & -e^{-2i\alpha}\\
             0 &e^{-2i\alpha} &-\sqrt{2\cos{3\alpha}}e^{-i\alpha} \\
             0 & 0 & e^{i\alpha} 
            \end{bmatrix}, 
\end{equation}
for $3\alpha\in(-\pi/2,\pi/2)$ mod. $2\pi$. 
\item $E$ is a reflection in a line.
\begin{equation}
 \rho(\tt e)=\begin{bmatrix}0 & 0 & 1\\0 & -1 & 0\\1 & 0 & 0\end{bmatrix}
\mbox{ and }
\rho(\tt p)=\begin{bmatrix}e^{i\alpha} & \sqrt{-2\cos{3\alpha}} & e^{-2i\alpha}\\
             0 &e^{-2i\alpha} &-\sqrt{-2\cos{3\alpha}}e^{-i\alpha} \\
             0 & 0 & e^{i\alpha} 
            \end{bmatrix},
\end{equation}
for $3\alpha\in(\pi/2,3\pi/2)$ mod. $2\pi$.
\end{itemize}
\end{proposition}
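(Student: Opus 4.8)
The plan is to verify directly that the two displayed pairs $(\bE,\bP)$ define representations of $\Gamma$ with the prescribed conjugacy classes, and then to invoke the trace classification of Proposition \ref{classSU21} to conclude that they exhaust $\mathcal R_\Gamma$ up to conjugacy. I work throughout in the Siegel model attached to $J_2$; note $\bP$ is upper triangular and fixes $q_\infty$, consistent with $P$ being parabolic. First I would check membership in SU(2,1). Both matrices have determinant $1$ by inspection ($\bP$ is triangular and $\bE$ a signed permutation), so it remains to check $\overline{\bE}^TJ_2\bE=J_2$ and $\overline{\bP}^TJ_2\bP=J_2$. The equality for $\bE$ is a short direct computation (in the first case $\bE=-J_2$, making it trivial). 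For $\bP$, writing $u=e^{i\alpha}$ and $s$ for the square-root entry, expanding $\overline{\bP}^TJ_2\bP$ collapses, after cancellation of the off-diagonal terms, to the single scalar relation $s^2=\pm(u^3+\overline u^3)=\pm 2\cos3\alpha$ (with the sign matching the case). This is precisely why the entry must be $\sqrt{2\cos3\alpha}$ or $\sqrt{-2\cos3\alpha}$, and it forces the stated range of $\alpha$ so that the square root is real. This scalar identity is the computational heart of the verification.

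Next, the group relations. A direct product gives $\bE^2=I_3$. Since $\bE,\bP\in$ SU(2,1), so is $\bE\bP$, and a one-line computation yields $\tr\bE\bP=0$ (compare \S\ref{subsection-traces-modular}). The characteristic polynomial of an SU(2,1) matrix of trace $z$ being $X^3-zX^2+\overline z X-1$, that of $\bE\bP$ is $X^3-1$, so Cayley--Hamilton gives $(\bE\bP)^3=I_3$. Hence each pair satisfies the defining relations of $\Gamma$ and defines a representation $\rho$ with $\rho({\tt p})=P$, $\rho({\tt e})=E$ and $\rho({\tt c})=EP$.

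I would then pin down the conjugacy classes. The eigenvalues of $\bP$ are $u,u,\overline u^2$ with $|u|=1$, and the non-vanishing of $s$ (i.e. $\cos3\alpha\neq0$) makes $\bP$ non-diagonalizable, hence parabolic with boundary eigenvalue $u$. The matrix $\bE\bP$ has trace $0$, so $f(0)=-27<0$ and Proposition \ref{deltofunc} shows it is regular elliptic, as required for $C$. Finally $E$ is an involution whose type is read off the sign of $\la\bv,\bv\ra$ for its $(+1)$-eigenvector $\bv$, in the spirit of \eqref{liftinvol}: one finds $\la\bv,\bv\ra<0$ in the first display and $\la\bv,\bv\ra>0$ in the second, so $E$ is a complex reflection about a point, resp. about a line. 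This matches, through Proposition \ref{prop-Cartan-eigenvalue}, the assignment of the two ranges of $\alpha$ to the two cases.

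To conclude, given an arbitrary irreducible $\rho\in\mathcal R_\Gamma$, I normalise its parabolic eigenvalue to $u=e^{i\alpha}$ with $\alpha\in(-\pi/3,\pi/3]$; Proposition \ref{prop-Cartan-eigenvalue} then determines the isometry type of $\rho({\tt e})$, hence which display to compare against. By \S\ref{subsection-traces-modular} the four coordinates of $\Phi_{2,1}(\rho({\tt e}),\rho({\tt p}))$ equal $(-1,\,2u+u^{-2},\,0,\,0)$, the same as for the candidate pair, and since the trace equation has a forced double root the fifth coordinate $\tr[\bE,\bP]$ agrees as well, so the full invariant $\Psi_{2,1}$ coincides. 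Proposition \ref{classSU21} then gives conjugacy in SU(2,1), and projecting to PU(2,1) yields the statement. The main obstacles are the honest $J_2$-invariance computation for $\bP$ (which I expect to collapse to the single identity $s^2=\pm2\cos3\alpha$) and the subtle point that the two cases share the same $\Phi_{2,1}$-image and are separated only by the isometry type of $E$; this is exactly why the argument must branch on the sign of $\cos3\alpha$ and appeal to Proposition \ref{prop-Cartan-eigenvalue} rather than to the four traces alone.
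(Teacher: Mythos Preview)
Your proof is correct and takes a genuinely different route from the paper. The paper proceeds geometrically: having established in \S\ref{subsection-geometry-modular} that any representation in $\mathcal R_\Gamma$ arises from an ideal triangle $(p_1,p_2,p_3)$ together with a choice of involution type for $E$, it simply normalises this triangle by the explicit lifts \eqref{normal} and reads off the matrices from the construction. Your approach is instead algebraic: verify directly that the displayed pairs lie in SU(2,1), satisfy the relations of $\Gamma$, and have the prescribed conjugacy classes, then invoke the trace classification of Proposition~\ref{classSU21} to conclude they exhaust $\mathcal R_\Gamma$. Your argument is more self-contained and bypasses the ideal-triangle construction entirely; the paper's is quicker once that construction is in hand, and it explains \emph{where} the matrices come from rather than merely certifying them.

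Two small points to tighten. First, to apply Proposition~\ref{classSU21} you need both pairs to be irreducible, so you should remark that irreducibility of $\rho$ forces $\cos3\alpha\neq0$ (equivalently, that the candidate pair is irreducible on the open interval); this is the content of Remark~\ref{nonconjugate-reducible-pairs-same-phi} and item~1 of \S\ref{section-mod-discrete}. Second, your claim that the non-vanishing of $s$ is what makes $\bP$ non-diagonalisable is slightly imprecise: the nonzero $(1,3)$ entry $\mp e^{-2i\alpha}$ already forces a Jordan block for the repeated eigenvalue $e^{i\alpha}$, independently of $s$. The conclusion that $P$ is parabolic is unaffected.
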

To obtain these matrices, we have made the choice to fix that the triple of points $(p_1,p_2,p_3)$ described in the 
previous section is given by the lifts
\begin{equation}\label{normal}
 \bp_1 =\begin{bmatrix}1 \\ 0 \\0\end{bmatrix},\quad \bp_2 =\begin{bmatrix}0 \\ 0 \\ 1\end{bmatrix},\mbox{ and }
\bp_3=\begin{bmatrix}\pm e^{-3i\alpha}\\-\sqrt{\pm 2\cos{3\alpha}}e^{-2i\alpha}\\1 \end{bmatrix}.\end{equation}
Here the choice of sign in the vector $\bp_3$ depends on the choice made for the conjugacy class of the involution $\rho({\tt e})$.
When $\alpha=0$, the three points $p_1$, $p_2$ and $p_3$ all are on $x$ axis of the Heisenberg group, which is the 
boundary of a real plane. When $3\alpha=\pi/2\mod\pi$, they are on the $t$ axis, which is the boundary of a complex line.

\subsection{Description of the moduli space and discreteness results\label{section-mod-discrete}}
It follows from the previous sections that the moduli space $\mathcal{R}_\Gamma$ has two connected components 
corresponding to the two possible conjugacy classes for $\rho({\tt e })$. Both connected components are arcs 
and we denote by $\mathcal{A}_p$ the component where $\rho({\tt e })$ is a reflection about a point and by $\mathcal{A}_{l}$
the component where it is a reflection about a line.
\begin{figure}
 \begin{center}
  \scalebox{0.5}{\includegraphics{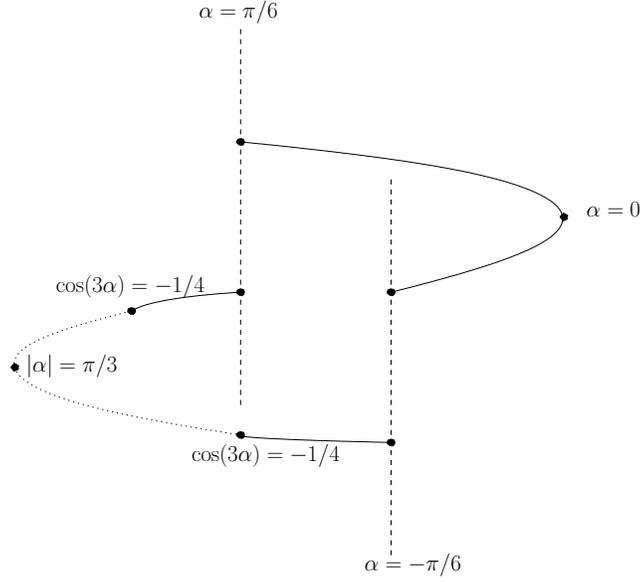}}
 \end{center}
\caption{A schematic representation of $\mathcal{R}_\Gamma$. The dotted part of the curve represents representations of the 
modular group that are either non-discrete or non-faithful.\label{picRgamma}}
\end{figure}
\begin{enumerate}
\item When $|\alpha|=\pi/6$ the representations coming from $\mathcal{A}_p$ and $\mathcal{A}_{l}$ preserve a complex line, and are 
discrete. Such groups are often $\C$-Fuchsian.
\item When $\alpha=0$ the representation coming from $\mathcal{A}_p$ is discrete and faithful, and preserve a real plane : it is 
$\R$-Fuchsian. 
\item When $\alpha=\pm\pi/3$ the representation coming from $\mathcal{A}_l$ is discrete but not faithful. It is easy to see that 
in this case the three products $E_iE_{i+1}$ are elliptic and fix the barycenter of the triangle $\Delta_\rho$.
\end{enumerate}

\begin{theorem}
\begin{enumerate}
 \item (Falbel-Koseleff \cite{FK2}, Parker-Gusevskii \cite{GuP,GuP2}) Any representation in the familly $\mathcal{A}_p$ is discrete 
 and faithful.
 \item (Falbel-Parker \cite{FJP}) A representation in $\mathcal{A}_l$ is discrete and faithful if and only if 
$\cos(3\alpha)\in[-1/4,0]$ (or equivalently $\cos(\A(\Delta_\rho))\in[0,1/4]$).
\end{enumerate}
 \end{theorem}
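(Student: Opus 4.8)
The plan is to treat the two families together as far as possible, exploiting the isomorphism $\Gamma\cong\Z/2\ast\Z/3$ carried by the presentation \eqref{modulpresent}: a representation is faithful exactly when it is injective with image the free product $\la E\ra\ast\la C\ra$, and for such a group discreteness and faithfulness are most naturally established \emph{simultaneously} by a ping-pong / fundamental domain argument. I would split the statement into the elementary necessity direction, which follows at once from the trace computation already carried out in Section~\ref{subsection-traces-modular}, and the harder sufficiency direction, which genuinely needs a geometric construction.

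For the necessity part of (2) I would argue purely with the commutator trace. Section~\ref{subsection-traces-modular} gives $\tr[E,P]=2(2+u^3+u^{-3})=4(1+\cos 3\alpha)$, which is real. Specialising the resultant $f$ of Proposition~\ref{deltofunc} to a real argument $t$ yields the factorisation $f(t)=(t+1)(t-3)^3$, so an element of SU(2,1) of real trace $t$ is (regular) elliptic precisely when $-1<t<3$. Hence $[E,P]$ is elliptic exactly when $-5/4<\cos 3\alpha<-1/4$, i.e.\ when $\cos 3\alpha<-1/4$. In that range $[E,P]$ is either an infinite-order elliptic, whose powers then accumulate at the identity so that $\Gamma$ cannot be discrete (compare Theorem~\ref{discret}), or a finite-order elliptic, so that $[E,P]^n=\mathrm{Id}$ is a relation not satisfied in $\mathrm{PSL}(2,\Z)$ and the representation cannot be faithful. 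The same formula delivers part (1) for free: on $\mathcal{A}_p$ one has $3\alpha\in(-\pi/2,\pi/2)$, hence $\cos 3\alpha>0$ and $\tr[E,P]>4>3$, so $[E,P]$ is loxodromic throughout and no such obstruction ever arises. At the threshold $\cos 3\alpha=-1/4$ one has $\tr[E,P]=3$, so $[E,P]$ becomes parabolic; this is the complex hyperbolic analogue of the last ideal triangle group.

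For the sufficiency direction (all of $\mathcal{A}_p$, and $\cos 3\alpha\in[-1/4,0]$ on $\mathcal{A}_l$) I would build a fundamental domain from the geometric data of Section~\ref{subsection-geometry-modular}. Starting from the ideal triangle $(p_1,p_2,p_3)$, the order-three elliptic $C$ cyclically permuting the $p_i$, and the $C$-invariant symmetric triple of horospheres $\mathcal{H}_i$ based at $p_i$, I would set up a ping-pong between the parabolic group $\la P\ra$ and the orbit of $\la C\ra$, the ping-pong sets being the horoballs bounded by the $\mathcal{H}_i$ and their complement. Equivalently, one constructs a Ford-type polyhedron bounded by the bisectors equidistant from consecutive parabolic fixed points and appeals to the Poincar\'e polyhedron theorem (Section~\ref{fundadoms}) to read off both discreteness and the presentation $\Z/2\ast\Z/3$. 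The numerical bound enters precisely here: the inequality $\cos 3\alpha\geq -1/4$ is exactly what makes the relevant horoballs (resp.\ bisectors) pairwise disjoint or tangent, so that the ping-pong inclusions hold; at $\cos 3\alpha=-1/4$ the configuration becomes tangent, and at the endpoint $\cos 3\alpha=0$ the group degenerates onto a complex line (the $\C$-Fuchsian case), where discreteness is immediate.

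The hardest step is this disjointness/combinatorial verification: checking that the bounding hypersurfaces meet only in the prescribed pattern, that the cycle conditions of the Poincar\'e polyhedron theorem hold uniformly in $\alpha$, and that $-1/4$ is exactly where tangency is lost. For $\mathcal{A}_l$ this is delicate because the mirrors of the $\R$-decomposition (Theorem~\ref{theo-dec-PaW}) are real planes, and, as noted in Section~\ref{fundadoms}, the relevant traces are then complex rather than real, so one must control the full degree-four resultant $f$ instead of minimising a real function; here one must follow the estimates of \cite{FJP}, whereas the $\mathcal{A}_p$ case follows the horospherical combination arguments of \cite{FK2,GuP,GuP2}.
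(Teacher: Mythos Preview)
Your necessity argument is correct and is exactly the computation the paper carries out after stating the theorem: the commutator trace $\tr[E,P]=4(1+\cos 3\alpha)$ together with the real factorisation $f(t)=(t+1)(t-3)^3$ shows $[E,P]$ is regular elliptic precisely when $\cos 3\alpha<-1/4$, and then either accumulation (infinite order) or a bogus relation (finite order) kills discreteness or faithfulness. This part matches the paper verbatim.

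For the sufficiency direction your overall strategy is right, but the paper organises the fundamental-domain construction differently, and the difference is worth noting. Rather than play ping-pong directly between $\la P\ra$ and $\la C\ra$ using horoballs, the paper passes to the index-$3$ subgroup $\la E_1,E_2,E_3\ra$ with $E_1=E$, $E_2=CEC^{-1}$, $E_3=C^{-1}EC$, and builds three hypersurfaces $S_1,S_2,S_3$ (cyclically permuted by $C$) such that each $E_i$ preserves $S_i$ and swaps the two components of $\HdC\setminus S_i$, with $S_{i\pm1}$ lying on the same side. Klein's combination theorem then gives $\Z/2\ast\Z/2\ast\Z/2$, and adding back $C$ recovers the modular group. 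This setup is better adapted than your horoball proposal: an involution naturally swaps the two sides of a single hypersurface, whereas horoballs at the $p_i$ do not give obvious ping-pong sets for the free product $\la E\ra\ast\la C\ra$ (what does $E$ do to the horoball at $p_1$?). Your ``Ford-type polyhedron with bisectors equidistant from consecutive parabolic fixed points'' is closer in spirit, but the actual $S_i$ used in \cite{FJP} are only \emph{piecewise} bisectors (and $\R$-surfaces of Schwartz type), not single bisectors.

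You correctly anticipate where the real work lies: the $S_i$ are forced to be tangent at the parabolic fixed points $p_{i+2}$, and for the last group ($\cos 3\alpha=-1/4$) there is an \emph{additional} tangency at the fixed point of $[E,P]$. Your remark that the $\R$-surface case is harder because the relevant traces are complex is also the point the paper makes.
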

Going back to the trace description of the representations we can rewrite the trace of the commutator $[{\bf E},{\bf P}]$
terms of the Cartan invariant of $\Delta_\rho$.
\begin{equation}\label{tracecomEP}
 \tr \left([{\bf E},{\bf P}]\right) = \left\{\begin{array}{rl}
4 + 4\cos\A & \mbox{when $E$ is a reflection in a point}\\
4 - 4\cos\A & \mbox{when $E$ is a reflection in a line}
\end{array}\right.
\end{equation}
The discreteness and faithfulness of a representation $\rho$ in $\mathcal{R}_\Gamma$ is thus controlled by the isometry 
type of the commutator $[E,P]$. Indeed, \eqref{tracecomEP} shows that this element  is always loxodromic when $E$ is a reflection in 
a point, and that is is non elliptic if and only if $\cos\A>1/4$ when $E$ is a reflection in a line.  We can therefore state:
\begin{theorem}\label{discretmodular}
Let $\rho$ be an irreducible representation of the modular group given by $E$ and $P$ as above. Then $\rho$ is 
discrete and faithful if and only if the commutator $[E,P]$ is non-elliptic.
\end{theorem}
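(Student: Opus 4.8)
The plan is to read the isometry type of $[E,P]$ directly off its trace and then match the resulting dichotomy against the two known discreteness statements for the arcs $\mathcal{A}_p$ and $\mathcal{A}_l$. The decisive input is that $\tr[E,P]$ has already been computed in Section \ref{subsection-traces-modular}: together with \eqref{tracecomEP} it gives $\tr[E,P]=4+4\cos(3\alpha)$, which is a \emph{real} number (this is also forced by the symmetry remark that $[E,P]^{-1}$ is $E$-conjugate to $[E,P]$ while $E=E^{-1}$). Since the isometry type of an element of PU(2,1) whose lift has real trace is governed by the sign of the resultant function $f$ of Proposition \ref{deltofunc}, I would first evaluate $f$ along the real axis.

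First I would factor $f$ on real arguments. For $T\in\R$ one has $\Re(T^3)=T^3$ and $|T|^2=T^2$, so
\begin{equation*}
f(T)=T^4-8T^3+18T^2-27=(T+1)(T-3)^3.
\end{equation*}
Hence $f(T)<0$ exactly when $-1<T<3$, and by Proposition \ref{deltofunc} the element $[E,P]$ is elliptic if and only if $-1<\tr[E,P]<3$. Substituting $\tr[E,P]=4+4\cos(3\alpha)$ reduces this to $\cos(3\alpha)<-1/4$. Equivalently, $[E,P]$ is \emph{non-elliptic} if and only if $\cos(3\alpha)\geq -1/4$.

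It then remains to compare this condition with discreteness and faithfulness on each arc. On $\mathcal{A}_p$ one has $|\alpha|\leq\pi/6$, so $\cos(3\alpha)\geq 0>-1/4$ and $[E,P]$ is always non-elliptic; by the Falbel--Koseleff and Parker--Gusevskii theorem every representation in $\mathcal{A}_p$ is moreover discrete and faithful, so both sides of the asserted equivalence hold identically on $\mathcal{A}_p$. On $\mathcal{A}_l$ one has $\pi/6\leq|\alpha|\leq\pi/3$, whence $\cos(3\alpha)\in[-1,0]$; here $[E,P]$ is non-elliptic precisely when $\cos(3\alpha)\in[-1/4,0]$, which is exactly the Falbel--Parker criterion for discreteness and faithfulness. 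In both cases the two conditions coincide, and since every irreducible $\rho\in\mathcal{R}_\Gamma$ lies on $\mathcal{A}_p$ or $\mathcal{A}_l$, this proves the theorem.

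I do not expect a genuine obstacle: the real content is carried by the two cited discreteness results, and the present argument only needs the reality of $\tr[E,P]$ together with the factorization $f(T)=(T+1)(T-3)^3$ to repackage those results in terms of the isometry type of a single element. The one point deserving care is the shared endpoint $|\alpha|=\pi/6$ (the $\C$-Fuchsian locus, where $\tr[E,P]=4$ and $f(4)=5>0$, so $[E,P]$ is loxodromic): there the pair is reducible (Remark \ref{nonconjugate-reducible-pairs-same-phi}) and hence excluded from $\mathcal{R}_\Gamma$, so one should simply note that the two arcs meet only along this reducible seam, which does not affect the equivalence for irreducible representations.
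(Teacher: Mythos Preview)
Your proposal is correct and follows essentially the same approach as the paper: the paper's own argument is the short paragraph immediately preceding the theorem, which notes that \eqref{tracecomEP} makes $[E,P]$ loxodromic on $\mathcal{A}_p$ and non-elliptic precisely when $\cos\A\leq 1/4$ on $\mathcal{A}_l$, matching the Falbel--Parker condition. Your version simply makes this explicit by factoring $f(T)=(T+1)(T-3)^3$ on the real axis and translating back to $\cos(3\alpha)$; the only point you add beyond the paper is the (correct) remark that the shared endpoint $|\alpha|=\pi/6$ is reducible and hence excluded from $\mathcal{R}_\Gamma$.
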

As mentioned above, when $\alpha=\pm\pi/3$, the representation is discrete but non-faithful. Other such examples exist. See for 
instance Proposition 5.10 in \cite{FJP2}, and the discussion before it. There, Falbel and Parker have provided 
generators and relations for the Eisenstein-Picard lattice PU(2,1,$\mathcal{O}_3$).  On their way, they prove that it can be seen as 
$\la G,T\ra$, where $G$ is a discrete but non-faithful representation of the modular group and $T$ an element of order 6. See also the 
discussion in \cite{FWa}. This representation belongs to the family $\mathcal{A}_l$, and correspond to the value $\alpha=\pm 2\pi/9$.

The most natural way of proving the discreteness of these representations is to consider the subgroup generated by the 
three involutions $E_1=E$, $E_2=CEC^{-1}$ and $E_3=C^{-1}EC$, and use Klein's combination theorem. 
This is what is done for instance in \cite{FJP}. To do so, the authors construct three hypersurfaces 
$S_1$, $S_2$ and $S_3$ satisfying the following properties.
They construct three hypersurfaces $S_1$, $S_2$ and $S_3$ satisfying the following properties.
\begin{enumerate}
\item For $i=1,2,3$, $\HdC\setminus S_i$ has two connected components.
\item The $S_i$'s are disjoint in $\HdC$, and for all $i$ the $S_{i+1}$ and $S_{i+2}$ are 
contained in the same connected component of $\HdC\setminus S_i$.
\item For $i=1,2,3$, $E_i$ preserves $S_i$ and exchanges the two connected components of $\HdC\setminus S_i$.
\item The $S_i$'s are cyclically permuted by $E$ : $S_{i+1}=E S_i$ (indices taken mod. 3).
\end{enumerate}
These conditions allow them to apply Klein's combination theorem, and conclude that the group is discrete.
These hypersurfaces are foliated by totally geodesic subspaces, $\C$-surfaces or $\R$-surfaces, as described in 
section \ref{fundadoms}.  One of the reasons that make these constructions quite tricky and technical is that the 
hypersurfaces $S_i$ and $S_{i+1}$ have to be tangent at the parabolic fixed point $P_{i+2}=C^{i+2}PC^{i+3}$ (indices taken mod. 3), 
and this leads to heavy computations (see section 4 of \cite{FJP}). In addition, for the last discrete and faithful representation, the 
(parabolic) fixed point of $[E,P]$ is another tangency point. The $\C$-surfaces used  in \cite{FJP} are piecewise bisectors, and the 
$\R$-surfaces are similar to those used by Schwartz in \cite{S3,Schbook}. 
\subsection{From the modular group to cusped surfaces group : the Gusevskii-Parker examples.}
It is a well-known fact that for any data $(g,p)$ such that $2-2g-p<0$ and $p>0$, there exists a subgroup of PSL(2,$\mathbb{Z}$) 
which is isomorphic to the fundamental group of the oriented surface of genus $g$ with $p$ punctures (see for instance \cite{Mil}).
Therefore, by passing to finite index subgroups, any representation $\rho$ of the modular group to PU(2,1) provides examples of 
representations of cusped surfaces of any topological type.  In \cite{GuP2}, Gusevskii and Parker have exploited this fact to 
produce examples of discrete and faithful representations of cusped hyperbolic surfaces. They consider the images by 
representations belonging to $\mathcal{A}_p$ of finite index subgroup of the modular group. They obtain this ways 1-parameter 
families of representations of $\pi_1(\Sigma_{g,p})$ into PU(2,1) having the following features.
\begin{enumerate}
 \item Each family contains only discrete and faithul representations mapping peripheral loops to parabolics. 
 \item Each family connects an $\R$-Fuchsian representation to a $\C$-Fuchsian one, and takes all values of the 
Toledo invariant allowed by the Milnor-Wood inequality (see \cite{GuP2})
\end{enumerate}
Of course, one can play a similar game with the representations in $\mathcal{A}_l$, described in \cite{FJP}. This time one produces
one parameters families of representations of $\pi_1(\Sigma_{g,p})$ that start from a $\C$-Fuchsian representation, and stop being 
discreteand faithful at a certain point, which correspond to the value of $\alpha$ for which $[E,P]$ becomes parabolic. Let us make 
explicit these parabolic elements for some simple surfaces.

\paragraph{The once punctured torus.} The subgroup $\Gamma_1$ of the modular group generated by the two elements 
${\tt a_1}={\tt cec}^{-1}{\tt e}$ and ${\tt b_1}={\tt cecec}$ uniformises a 1-punctured torus, and has index 6 in 
PSL(2,$\mathbb{Z}$). This can be easily checked using the matrix representatives of ${\tt e}$, ${\tt c}$ and ${\tt p}$, given by
\begin{equation}\label{matrixpsl2z}
 {\tt e}=\begin{bmatrix}0 & -1\\1 & 0\end{bmatrix},\,{\tt p}=\begin{bmatrix}1 & 1\\0 & 1\end{bmatrix}\mbox{ and }
{\tt c}=\begin{bmatrix}0 & -1\\1 & 1\end{bmatrix}.
\end{equation}
As $[{\tt e},{\tt p}]={\tt epep}^{-1}={\tt cec}^{-1}{\tt e}$, we see that the point where $[E,P]$ is parabolic corresponds to 
$A_1=\rho(\tt a_1)$ becoming parabolic. In other words, the family of representations of $\Gamma_1$ we have obtained stops being 
discrete when one pinches the simple closed curve corresponding to ${\tt a_1}$ on the 1-punctured torus.

\paragraph{The 3-punctured sphere.} Similarly, the subgroup $\Gamma_2$ of the modular group generated by the two elements 
${\tt a}={\tt cece}$ and ${\tt b}={\tt c}^{-1}{\tt ece}{\tt c}^{-1}$ uniformises a 3-punctured sphere, and has index 6 in 
PSL(2,$\mathbb{Z}$). By a direct verification, we see that
$$[\tt e,\tt p]^3=({\tt cec}^{-1}{\tt e})^3=[{\tt b}^{-1},{\tt a}^{-1}].$$
This time the corresponding family of representations of $\Gamma_2$ stops being discrete when the commutator $[A,B]$ 
(which is conjugate to $[B^{-1},A^{-1}]^{-1}$) becomes parabolic.

\subsection{Deformations transverse to the Gusevskii-Parker familly}
The Farey set $\mathcal{F}\subset S^1$ can be see as the set of fixed points of parabolic elements in PSL(2,$\Z$). In the 
upper-half model of $\HuC$, it is nothing but the one point compactification of $\Q$. The Farey tesselation of the Poincar\'e 
disc is obtained from $\mathcal{F}$ by connecting by a geodesic those rationnals $p/q$ and $p'/q'$ such that $|pq'-p'q|=1$. 
Clearly, the Farey tesselation is invariant under the action of the modular group. Moreover two parabolic maps in PSL(2,$\Z$) 
have the same fixed point if and only if they are in a common cyclic group. This means that
given a discrete and faithful representation $\rho$ of PSL(2,$\Z$) to PU(2,1) that maps parabolics to parabolics, 
one can construct a $\rho$-equivariant map 
\begin{align}
\phi_\rho & : \mathcal{F}\longrightarrow\partial\HdC\nonumber \\
 &  m={\rm fix}(g)\longmapsto {\rm fix}(\rho(g)),
\end{align}
where $g$ here is a parabolic fixing $m$. In fact if one think of $\mathcal{F}$ as acted on by the fundamental group $\pi_1$ of a cusped 
surface  $\Sigma$ (seen for instance as a subgroup of PSL(2,$\Z$)), it is essentially equivalent to construct a ($\pi_1$-equivariant) map 
$\phi:\mathcal{F}\longrightarrow\partial\HdC$ and a representation of $\pi_1$ to PU(2,1). 

In the case of \cite{FJP}, for a given choice of $\alpha$, or equivalently for a given choice of parabolic eigenvalue 
for $\rho({\tt p})$ all triangles of the Farey tesselation are mapped to ideal triangles with Cartan invariant equal to $\A=\pi-3\alpha$. 
In \cite{Wi6}, this point of view is adopted to construct representations for which all triangles are contained in a real plane 
(that is they have Cartan invariant equal to zero). The method can be summed up as follows.
\begin{enumerate}
 \item Embed isometrically the Farey tesselation into a real plane $\HdR\subset\HdC$. The representation one obtains this way is $\R$-Fuchsian : it 
is discrete, faithful and maps parabolics to parabolics. 
 \item Shear and bend along the edges in a $\pi_1$-invariant way. One obtain this way a $\pi_1$ invariant familly of real ideal 
triangles. The corresponding representation  no longer preserve $\HdR$ provided that the bending angle are not zero. It has a 
priori no reason to be discrete, even though it is expected that for small values if the shear-bend parameters it should be so.
\end{enumerate}
The $\pi_1$ invariant shear and bend deformation can be encoded via a decoration of an ideal triangulation of the surface 
$\Sigma$ considered by cross-ratio like invariants (we refer to \cite{Wi6} for details), in the spirit  Penner coordinates on the 
decorated Teichm\"uller space (see \cite{Penner}). The main result of \cite{Wi6} is concerned with those representations obtained by 
from an $\R$-Fuchsian one by bending of the  same angle $\alpha$ along each edge of the ideal triangulation. This kind of bending 
is called \textit{regular} in \cite{Wi6}. A simplified version of the main result is as follows. 
\begin{theorem}\label{theo-bend}
For any $\alpha\in[-\pi/2,\pi/2]$, and any shearing data, the representation obtained by a regular bending of angle $\alpha$ is 
discrete, faithful, and maps peripheral curves to parabolics.
\end{theorem}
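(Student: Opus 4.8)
The plan is to establish the three conclusions by realising $\rho_\alpha$ as the holonomy of an explicit $\pi_1(\Sigma_{g,p})$-equivariant family of ideal triangles in $\overline{\HdC}$ and then running a Klein combination argument along the dual tree of the Farey triangulation, exactly in the spirit of the modular group construction recalled in Section \ref{section-mod-discrete}. The key structural feature I would exploit is that a regular bending keeps every triangle of the (lifted) Farey tessellation a \emph{real} ideal triangle, hence of Cartan invariant $0$ by Proposition \ref{prop-prop-Cartan}, while rotating the real plane carrying each triangle by the fixed dihedral angle $\alpha$ relative to its neighbour across each shared geodesic. The shearing data only slides the ideal vertices along the common boundary leaves and leaves this picture unchanged. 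This is where the hypothesis $\alpha\in[-\pi/2,\pi/2]$ enters: it is precisely the range of bending angles for which the developed real planes stay in convex position, the endpoints $\pm\pi/2$ producing the $\C$-Fuchsian degeneration into a complex line.

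First I would prove that the peripheral curves go to parabolics. A loop around a puncture is a cycle of consecutive triangles sharing a common ideal vertex $p$, and the peripheral holonomy is the corresponding product of edge transition maps. Fixing a lift $\bp$ of $p$ and developing the link of the cusp into a horosphere based at $p$, which is a copy of the Heisenberg group with the multiplication \eqref{heislaw}, I would carry out a telescoping computation of the type used for \eqref{Xcross-parab} and in Lemma \ref{lem-cross-ratio-eigenvalues}. Because the bending angle is the \emph{same} at every step, the rotational contributions of the successive transition maps combine into a pure translation, so the holonomy fixes $p$ and acts on the horosphere as a Heisenberg translation. It therefore preserves a horosphere and has a unit-modulus eigenvalue at $p$, which excludes the loxodromic and regular elliptic types; the remaining non-parabolic possibility, a complex reflection about a line through $p$, is ruled out exactly as in \cite{ParkWi,FJP} by noting that its action on the horosphere would have a fixed point, whereas a non-trivial Heisenberg translation has none. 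Hence every peripheral element is parabolic.

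Next I would set up the combination. Dual to each edge $e$ of the tessellation I would place a separating hypersurface $S_e$: starting from the $\R$-Fuchsian configuration ($\alpha=0$), these may be taken to be bisectors transverse to $\HdR$, and the bending carries them to spinal $\R$-surfaces (flat packs) of the kind produced by Proposition \ref{dualcurve}, each foliated by the real planes containing the geodesic carried by $e$. Equivariance reduces everything to the finitely many edges of the triangulation of $\Sigma_{g,p}$ itself, so it suffices to verify, for the edges bounding a common triangle, the four combination conditions listed in Section \ref{section-mod-discrete}: that $\HdC\setminus S_e$ separates, that distinct $S_e$ are disjoint apart from the prescribed cusp tangencies, and that the generators pair their sides correctly. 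As recalled in Section \ref{fundadoms}, disjointness of $S_e$ and $S_{e'}$ is equivalent to the product $\sigma_e\sigma_{e'}$ of the two associated real reflections being loxodromic, that is to $f(\tr(\sigma_e\sigma_{e'}))>0$, with $f$ the resultant of Proposition \ref{deltofunc}.

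The hard part will be this last disjointness estimate, made delicate by the cusp tangencies forced by the first step: at a tangency point the leaf-to-leaf trace lands exactly on the deltoid (indeed at its cusp, since the relevant product is parabolic there), and one must show it stays strictly outside the deltoid for all other pairs of leaves and, crucially, \emph{uniformly in the shearing data}. Because the $S_e$ are $\R$-surfaces, the trace $\tr(\sigma_e\sigma_{e'})$ is genuinely complex, so one faces the two-dimensional condition $f>0$ rather than a one-dimensional sign, which is exactly the difficulty flagged in Section \ref{fundadoms} and handled in \cite{Wi6}. The regularity of the bending is what tames it: since every triangle carries the same dihedral angle $\alpha$, the trace $\tr(\sigma_e\sigma_{e'})$ depends only on $\alpha$ and on the single shear across the shared edge, so an a priori infinite system of inequalities collapses to a uniform two-parameter trace computation. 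Granting this estimate, Klein's combination theorem gives that $\rho_\alpha$ is discrete and that $\pi_1(\Sigma_{g,p})$ decomposes as the amalgam along the dual tree dictated by the gluing, which forces $\rho_\alpha$ to be faithful; together with the parabolicity obtained in the first step this yields all three conclusions.
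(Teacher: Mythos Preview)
Your approach is essentially the one the paper sketches: associate to each edge of the lifted triangulation a spinal $\R$-surface (flat pack), show that the condition $\alpha\in[-\pi/2,\pi/2]$ forces these hypersurfaces to be pairwise disjoint (away from the forced cusp tangencies), and conclude by a Klein combination along the dual tree. You also correctly identify the point where regularity earns its keep, namely in collapsing the a priori infinite family of leaf-to-leaf trace inequalities to a uniform finite check.

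One factual slip: the endpoints $\alpha=\pm\pi/2$ do \emph{not} produce a $\C$-Fuchsian degeneration. All the representations in this family preserve a piecewise totally real disc and therefore have Toledo invariant zero, whereas a $\C$-Fuchsian representation would be extremal for the Toledo invariant; the paper makes this explicit in the remark following the theorem. The boundary values $\pm\pi/2$ are simply the threshold beyond which the disjointness of the flat packs fails, not a collapse into a complex line. This does not affect your argument, but you should drop that sentence.
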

The proof of Theorem \ref{theo-bend} is as follows. For any two ideal triangles $\Delta$ and $\Delta'$ sharing a common edge $\gamma$, 
it is possible to construct a canonical hypersurface  $S(\Delta,\Delta')$  having the property that $\Delta$ and $\Delta'$ are 
contained in opposite connected components of $\HdC\setminus S(\Delta,\Delta')$. This hypersurface is in fact a spinal $\R$-surface, 
or flat pack (see  the discussion following Proposition \ref{dualcurve}). The 
condition that $\alpha\in[-\pi/2,\pi/2]$ guarantees that all the surfaces $S(\Delta,\Delta')$ are disjoint when $(\Delta,\Delta')$ 
run over all pairs of neighbouring triangles of the trianguations. This provides a fundamental domain for the action of the image of 
the corresponding representation. Moreover, these representations preserve a disc which is piecewise a real plane : it is obtained as
 a union of real ideal triangles. A direct consequence of Theorem \ref{theo-bend} is the following corollary.
\begin{corollary}\label{embed}
For each $\alpha\in[-\pi/2,\pi/2]$, the regular bending of angle $\alpha$ induces an embedding of the Teichm\"uller space of 
$\Sigma$ into the PU(2,1)-representation variety of $\Sigma$, of which image contains only discrete, faithful and type-preserving 
representations.
\end{corollary}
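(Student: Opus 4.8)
The plan is to read the statement as the conjunction of two assertions: that every representation in the image is discrete, faithful and type-preserving, and that the parametrising map is a topological embedding. The first assertion is exactly the content of Theorem~\ref{theo-bend}, so the genuine work lies in the embedding claim. First I would fix the ideal triangulation of $\Sigma$ used in \cite{Wi6} and recall that, relative to it, the Teichm\"uller space $\mathcal{T}(\Sigma)$ is parametrised by \emph{shear} (Penner--Thurston) coordinates: one real shearing datum per edge, subject to the completeness conditions at the cusps (see \cite{Penner}). These shearing data are precisely the input of the regular bending construction, so for fixed $\alpha$ the assignment
\[
\beta_\alpha:\ \mathcal{T}(\Sigma)\longrightarrow \mathrm{Rep}(\Sigma,\mathrm{PU}(2,1)),\qquad s\longmapsto [\rho_{\alpha,s}],
\]
is well defined. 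By Theorem~\ref{theo-bend} its image consists of discrete, faithful, type-preserving representations, and continuity of $\beta_\alpha$ follows because developing the decorated triangulation into $\HdC$ depends real-analytically on the shearing parameters, the angle $\alpha$ being held constant.

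For injectivity and the embedding property I would reconstruct $s$ from $\rho_{\alpha,s}$. The key observation is that $\rho_{\alpha,s}$ preserves the equivariant disc produced in the proof of Theorem~\ref{theo-bend}, a union of bent real ideal triangles that is piecewise a real plane, bounded by the flat-pack $\R$-surfaces $S(\Delta,\Delta')$. Bending is an \emph{extrinsic} operation: rotating the successive real-plane pieces about their common geodesic edges by the fixed angle $\alpha$ leaves the intrinsic metric of this disc unchanged. That intrinsic metric is the complete hyperbolic surface with shearing datum $s$, i.e. the Teichm\"uller point itself, and the $\pi_1$-action through $\rho_{\alpha,s}$ restricts to the disc as an action by intrinsic isometries. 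Equivalently, the shears can be read off directly from the cross-ratio decoration of the $\phi_\rho$-images of the ideal vertices, these being the invariants that encode the shear--bend deformation in \cite{Wi6} and the argument contributed by $\alpha$ being known and fixed. This yields a continuous inverse defined on the image of $\beta_\alpha$, so $\beta_\alpha$ is a homeomorphism onto its image.

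The step I expect to be most delicate is making the reconstruction canonical and continuous. To avoid having to prove that the invariant piecewise-real-plane disc is \emph{uniquely} determined by $\rho_{\alpha,s}$, the cleanest route is to phrase the recovery purely in terms of the shear cross-ratios formed by the images under $\phi_\rho$ of the ideal vertices adjacent to each edge, which are honest functions of $\rho_{\alpha,s}$; one then checks that $s$ maps to these cross-ratios by a proper injection, which upgrades the continuous bijection onto the image to a homeomorphism. Once this is in place, the embedding statement together with the image description furnished by Theorem~\ref{theo-bend} gives the corollary.
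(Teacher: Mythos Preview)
Your approach is correct and aligns with the paper's: the paper simply states that the corollary is ``just a reformulation of Theorem~\ref{theo-bend} using shear coordinates on the Teichm\"uller space of $\Sigma$'', which is exactly the framework you set up. You supply considerably more detail than the paper does---the explicit recovery of the shearing datum from the cross-ratio decoration of the ideal vertices, and the argument for a continuous inverse---whereas the paper treats these points as implicit in the identification of Teichm\"uller space with its shear parametrisation.
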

Corollary \ref{embed} is just a reformulation of Theorem \ref{theo-bend} using shear coordinates on the Teichm\"uller space of 
$\Sigma$.
\begin{remark}
\begin{enumerate}
\item There is a slight inacuracy in the above two statements. If one starts from an arbitrary ideal triangulation of $\Sigma$, then one 
obtains a representation of $\pi$ in Isom($\HdC$) : some elements of $\pi$ can be mapped to antiholomorphic isometries. If one wants 
to obtain a representation in PU(2,1), one needs to start from a bipartite triangulation. This fact must be taken into account in the 
definition of regular bending. We refer the reader to \cite{Wi6} for details.
\item In \cite{Wi2}, the same result is proved in the special case where $\Sigma$ is a 1-punctured torus, from the point of view of 
groups generated by three real symmetries.
\item The fact that all representations obtained in Theorem \ref{theo-bend} preserve a piecewiese totally real disc implies that their
Toledo invariants are equal to zero. In contrast, the Gusevskii-Parker family of representations of $\pi$ take all possible values of 
the Toledo invariant. It can be proved that the only intersection between these two family is the $\R$-Fuchsian class in the 
Gusevskii-Parker family.
\end{enumerate}
\end{remark}
\subsection{A spherical CR structure on the Whitehead link complement\label{SCRWLC}}
In this section, we come back to representations of the modular group, and we fix once and for all a value of $\alpha$ so that 
$\cos(3\alpha)=-1/4$. This leaves two choices, but the corresponding representations are conjugate by an antiholomorphic isometry, 
so that the precise choice is of no importance for us. We are thus considering what we could call the 
\textit{last complex hyperbolic modular group}. We will from now on denote by $\Gamma$ the image of the modular group by the 
representation rather than the group PSL(2,$\mathbb{Z}$).

Denote by $\Gamma_0$ the subgroup of $\Gamma$ generated by $E_1=E$, $E_2=CEC^{-1}$ and $E_3=C^{-1}EC$. 
It has index three in $\Gamma$ and the product $E_1E_2=ECEC^{-1}$ is unipotent parabolic. 
We obtain therefore a group generated by three complex reflections of order 2, with parabolic pairwise products. Moreover, 
the triple product $E_1E_2E_3$ is equal to $P^{3}$ and is thus parabolic as well. This implies that $\la E_1,E_2,E_3\ra$ is 
in fact a copy of the \textit{last ideal triangle group} (see \cite{FJP,GP,S1,Schbook} and section 
\ref{section-Schwartz-triangle-groups}). Let $C'$ be the order three elliptic isometry cyclically permuting the fixed points 
of the three parabolic maps $E_iE_{i+1}$ ($C'$ is different from $C$!). The isometry $C'$ also cyclically conjugates the 
parabolic maps $E_iE_{i+1}$. Let $\Gamma_3$ be the group generated by $\Gamma_0$ and $C'$. In \cite{S1}, Schwartz has proved 
the following.
\begin{theorem}\label{theo-Schwartz-WLC}
The group $\Gamma_3$ is discrete, and its manifold at infinity is homeomorphic to the complement of the Whitehead link.
\end{theorem}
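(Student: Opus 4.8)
The plan is to split the statement into two independent parts: first, that $\Gamma_3$ is discrete, and second, that the quotient of its domain of discontinuity at infinity is the Whitehead link complement. The starting point for both is the fact recalled just above the statement, namely that $\Gamma_0=\langle E_1,E_2,E_3\rangle$ is a copy of the \emph{last} ideal triangle group. By the theorem on ideal triangle groups quoted in Section \ref{section-Schwartz-triangle-groups}, this group is discrete and isomorphic to the free product $\Z/2\Z\ast\Z/2\Z\ast\Z/2\Z$, since the parameter $\cos(3\alpha)=-1/4$ we have fixed corresponds precisely to the endpoint of the discrete interval (the triple product $E_1E_2E_3=P^3$ is parabolic rather than elliptic).

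For discreteness of $\Gamma_3$, I would argue that $\Gamma_0$ sits inside $\Gamma_3$ with finite index. Since $C'$ cyclically conjugates the three parabolics $E_iE_{i+1}$, it normalises the rank-two free group $\Gamma_0^+=\langle E_1E_2,\,E_2E_3\rangle$, which is the holomorphic index-two subgroup of $\Gamma_0$ (here I would use the relation $(E_1E_2)(E_2E_3)(E_3E_1)=1$, which follows from $E_i^2=1$). The first step is to promote this to the statement that $C'$ normalises all of $\Gamma_0$, so that $\Gamma_0$ is normal in $\Gamma_3$ with $[\Gamma_3:\Gamma_0]=3$; discreteness of $\Gamma_3$ then follows from discreteness of $\Gamma_0$ together with the finite order of $C'$. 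Alternatively, and more robustly, discreteness can be extracted directly from the Klein-combination argument used below.

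The heart of the proof is the construction of the domain of discontinuity $\Omega\subset\partial\HdC\cong S^3$ and of a fundamental domain for the action of $\Gamma_3$ on it. Following the scheme of Section \ref{fundadoms}, I would build separating hypersurfaces $S_1,S_2,S_3$ foliated by totally geodesic leaves (here $\R$-spheres, as in Schwartz's treatment of the last ideal triangle group) associated to the three involutions, arranged so that each $E_i$ preserves $S_i$ and exchanges its two sides, the $S_i$ are pairwise disjoint in $\HdC$ and mutually tangent at the relevant parabolic fixed points of $\partial\HdC$, and the whole configuration is cyclically permuted by $C'$. Klein's combination theorem then simultaneously reproves discreteness and furnishes a fundamental domain; intersecting with $\partial\HdC$ produces a fundamental domain for the action on $\Omega$. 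The final step is to identify $\Omega/\Gamma_3$ topologically: I would read off the face-pairings of this boundary polyhedron from the generators $E_i$ and $C'$, present the quotient as an explicit ideal triangulated $3$-manifold, and match it with the known ideal triangulation of the Whitehead link complement.

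The hard part will be the geometric construction in the third paragraph. The choice $\cos(3\alpha)=-1/4$ is exactly the degenerate, boundary case, so the bounding surfaces become tangent at the parabolic fixed points and the combinatorics of their intersections is at its most fragile; verifying the disjointness-and-tangency pattern of the $\R$-spheres (rather than the easier $\C$-surface situation, where products of complex reflections automatically have real trace) is where the genuine analytic work lies. The subsequent recognition of the quotient as the Whitehead link complement is then a careful but essentially combinatorial bookkeeping of the face identifications.
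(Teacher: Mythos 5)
You should first be aware that the paper contains no proof of this statement: it is quoted verbatim from Schwartz, and proving it is essentially the entire content of the fifty-page article \cite{S1}. Measured against that, your proposal reproduces the general philosophy of Schwartz's argument (discreteness of the critical ideal triangle group, separating hypersurfaces foliated by totally geodesic pieces, a Klein-type combination theorem, then a topological identification of the quotient), but every step where the theorem actually lives is deferred: you ``would build'' the $\R$-surfaces, ``would read off'' the face pairings, and ``would match'' them with a triangulation of the Whitehead link complement. At the critical parameter $\cos(3\alpha)=-1/4$ the construction of these surfaces, the verification of their disjointness-and-tangency pattern, and the recognition of the quotient are precisely what \cite{S1} does, through a long and delicate analysis; acknowledging that this is ``where the genuine analytic work lies'' does not discharge it. The only step you can legitimately close by citation is the discreteness of $\Gamma_0$ itself, via the Goldman--Parker theorem quoted in Section \ref{section-Schwartz-triangle-groups} (the critical case is ``not elliptic''), though even that endpoint case is due to Schwartz.

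Beyond the deferrals, two steps you do argue have genuine gaps. First, discreteness of $\Gamma_3$: from the fact that $C'$ cyclically conjugates the parabolics $E_iE_{i+1}$ you correctly get that $C'$ normalises $\Gamma_0^+=\langle E_1E_2,E_2E_3\rangle$, and since $E_1$ also normalises $\Gamma_0^+$ (note $E_1(E_2E_3)E_1=(E_1E_2)(E_3E_1)$), the subgroup $\Gamma_0^+$ is normal in $\Gamma_3$. But the quotient $\Gamma_3/\Gamma_0^+$ is generated by the images of $E_1$ and $C'$, of orders dividing $2$ and $3$, and nothing you say prevents it from being infinite (a quotient of $\Z/2\Z\ast\Z/3\Z$ can well be infinite); the ``promotion'' to ``$C'$ normalises all of $\Gamma_0$'' is exactly the missing relation between $C'$ and the individual involutions $E_i$, and it is asserted, not proved. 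Without finite index, discreteness of $\Gamma_3$ does not follow from discreteness of $\Gamma_0$. Second, the statement concerns a \emph{manifold} at infinity: even granting discreteness and a fundamental domain, you need the action of $\Gamma_3$ on its discontinuity region $\Omega\subset\partial\HdC$ to be free. This is not automatic, because $\Gamma_3$ has torsion; in particular it contains the complex reflections $E_i$, each of which fixes pointwise a circle in $\partial\HdC$ (the boundary of its mirror). One must show that these circles, and the fixed-point sets of all elliptic elements of $\Gamma_3$, lie in the limit set. Your proposal never addresses this, so at best it would produce a quotient orbifold, not a manifold homeomorphic to the Whitehead link complement.
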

This is an example of what is called a \textit{spherical CR structure} on a 3-manifold. In general such a structure is 
an atlas such that the transition maps are restrictions of elements of PU(2,1). In other words, it is an $(X,G)$-structure 
where $X=S^3$ and $G=$PU(2,1). The very special feature here is that the Whitehead link complement is a hyperbolic 3 manifold. 
In particular, the quotient of $\HdC$ by $\Gamma_3$ is a complex hyperbolic orbifold of which boundary is a real hyperbolic 
manifold.

Here, $\Gamma_3$ contains elliptic elements and therefore it is not a faithful representation of the fundamental group of the 
Whitehead link complement, which is torsion free. This does not contradict the fact that the quotient of the discontinuity region of 
$\Gamma_3$ is a manifold. Indeed the only elliptic elements in $\Gamma_3$  are regular elliptic isometries (of order 3). 
This means inparticular that they act on $S^3$ without fixed point. Moreover, the discontinuity region is the complementary of a 
curve and is not simply connected.

A very natural question is to decide which 3-manifolds admit such a spherical CR structure.
In \cite{KaTu}, Kamishima and Tsuboi have studied spherical CR structures on Seifert fiber spaces. In particular their
Theorem 3 shows that in a sense the main class of closed orientable 3-manifold with an $S^1$ invariant spherical CR structure are
circle bundles over Euclidean or hyperbolic 2-orbifolds.
Explicit  spherical CR structure on circle bundles over hyperbolic surfaces are relatively easy to produce by considering 
discrete and faithful representations of surface groups in PU(2,1). Many examples can be found in the litterature 
(among these \cite{AGG2,AGG,GKL,Gaye}).  In \cite{S4}, Schwartz has given an example of a spherical CR structure on a 
closed hyperbolic 3 manifold. Recently, in \cite{DF} Deraux and Falbel have described a spherical CR structure on the complement 
of the figure eight knot. In the article to come \cite{ParkWi2}, Parker and Will produce an example of a spherical CR structure on the 
complement of the Whitehead link that is not conjugate to Schwartz's one. The question of knowing which hyperbolic 3 manifolds 
admit a spherical CR structure is still wide open.

\frenchspacing

\end{document}